\title%
{On Lelong Numbers of Generalized Monge-Ampère Products} %
\author[M. Sera]{Martin L. Sera}
\address{M. Sera, Faculty of Engineering, Kyoto University of Advanced Science, Kyoto 615-8577, Japan}
\email{sera.martin@kuas.ac.jp}
\subjclass[2010]{32W20, 32U25, 32U05, 32U40 (14C17).}
\date{\today}
\dedicatory{In memory of Jean-Pierre Demailly}
\DeclareMathSymbol{\lsb@l}{\mathalpha}{letters}{`l}
\def\section{\@startsection{section}{1}%
  \z@{1\linespacing\@plus.5\linespacing}{\linespacing\@plus .5\linespacing}%
  {\normalfont\bf\centering}}
\numberwithin{equation}{section}
 \numberwithin{equationX}{section}
\newaliascnt{proposition}{equationX} 
\newtheorem{prop}[proposition]{Proposition}
\newaliascnt{theorem}{equationX} 
\newtheorem{thm}[theorem]{Theorem}
\newaliascnt{lemma}{equationX}  
\newtheorem{lma}[lemma]{Lemma}
\newaliascnt{corollary}{equationX}
\newtheorem{cor}[corollary]{Corollary}
\newenvironment{manualthm}[1]{%
  \manualthminner
}{\endmanualthminner}
\newenvironment{manualcor}[1]{%
  \manualcorinner
}{\endmanualcorinner}
\theoremstyle{definition}
\newaliascnt{definition}{equationX} 
\newtheorem{df}[definition]{Definition}
\newaliascnt{remdef}{equationX} 
\newtheorem{rem-def}[remdef]{Remark/Def{.}}
\newaliascnt{example}{equationX}
\newtheorem{exa}[example]{Example}
\newaliascnt{remark}{equationX} 
\newtheorem{rem}[remark]{Remark}
\newaliascnt{setting}{equationX} 
\newtheorem{set}[setting]{Setting}
\DeclareMathOperator{\Hom}{\mathscr{H}\text{\kern -3pt {\calligra\Large om}}\,}
\DeclareMathOperator{\Ext}{\mathscr{E}\text{\kern -3pt {\calligra\Large xt}}\,\,}
\DeclareMathOperator{\Image}{\mathscr{I}\text{\kern -3pt {\calligra\Large m}}\,}
\DeclareMathOperator{\Ker}{\mathscr{K}\text{\kern -3pt {\calligra\Large er}}\,}
\newcommand{\PM}{\mathscr{P} \kern -3pt \mathscr{M}}
\newcommand{\CH}{\mathscr{C} \kern -2pt \mathscr{H}}
\newcommand{\CC}{\ensuremath{\mathbb{C}}\xspace}
\newcommand{\PP}{\ensuremath{\mathbb{P}}\xspace}
\newcommand{\PE}{\ensuremath{{\mathbb{P}(E)}}\xspace}
 \newcommand{\minus}{\setminus} %
\newcommand{\eps}{\varepsilon}\newcommand{\ph}{\varphi}
\newcommand{\FS}{{\mathrm{{FS}}}}
\newcommand{\nnef}{{\mathrm{nnf}}}%
\DeclareMathOperator{\codim}{codim}
\DeclareMathOperator{\ord}{ord}
\newcommand{\cf}{cf.\ } \newcommand{\eg}{e.\,g.\ }   \newcommand{\ie}{i.\,e.\ }  %
\newcommand{\ip}{i.\,p.\ }
\newcommand{\st}{s.\,t.\ }
\def\section{\@startsection{section}{1}%
  \z@{3\linespacing\@plus.5\linespacing}{\linespacing\@plus .5\linespacing}%
  {\normalfont\bf\centering}}
\newcommand{\kdots}{\textrm{,..,\hspace{.2ex}}}
\newcommand{\vtheta}{\vartheta}
\newcommand{\RR}{\mathbb{R}}
\newcommand{\cO}{\mathcal{O}}
\newcommand{\pt}{\mathrm{pt}}
\newcommand{\loc}{\mathrm{loc}}
\newcommand{\smth}{\mathrm{smth}}
\newcommand{\Def}{\mathrm{def}}
\newcommand{\AW}{{\textrm{A\hspace{-2.5pt}\raisebox{.2pt}{W}}}}
\newcommand{\BTD}{\textrm{BTD}\xspace}
\newcommand{\BC}{\ensuremath{\partial\overline\partial}\xspace}
\newcommand{\ddc}{\ensuremath{dd^c}\xspace}
\newcommand{\ignore}[1]{}
\DeclareMathOperator\supp{supp}
\DeclareMathOperator\pr{pr}
\renewcommand{\bar}{\overline}
\newcommand{\smbinom}[2]{\hbox{$\binom{#1}{#2}$}}
\newcommand{\EoPfwEq}{$ $\\[-6ex]}
\def\dimX{\ensuremath{n}\xspace}
\def\dimF{\ensuremath{m}\xspace}
\def\dimY{\ensuremath{n-m}\xspace}
\def\rkE{\ensuremath{r}\xspace}
\newcommand{\ManualItem}[1]{\noindent\makebox[3ex][l]{$\mathrm{(#1)}$}}
\newcommand{\one}{\mathbbm{1}}%
\begin{document}

\begin{abstract}
	We consider generalized (mixed) Monge-Amp\`ere products of quasiplurisub\-harmonic
	functions (with and without analytic singularities) as they were introduced and studied in several articles written by subsets of M.~Andersson, E.~Wulcan, Z.~B\l ocki, R.~L\"ark\"ang, H.~Raufi, J.~Ruppenthal, and the author. We continue these studies and present estimates for the Lelong numbers of pushforwards of such products by proper holomorphic submersions.
	Furthermore, we apply these estimates to Chern and Segre currents of pseudoeffective vector bundles. Among other corollaries, we obtain the following generalization of a recent result by X. Wu. If the non-nef locus of a pseudoeffective vector bundle $E$ on a K\"ahler manifold is contained in a countable union of $k$-codimension\-al analytic sets, and if the $k$-power of the first Chern class of $E$ is trivial, then $E$ is nef.
\end{abstract}

\maketitle

\onehalfspacing
\thispagestyle{empty} 

\section{Introduction}
\noindent
For a plurisubharmonic (psh) function $u$ on a domain $D\subset \CC^n$,
the Lelong number in $x_0\in D$ is defined by
\begin{equation*}
	\nu(u,x_0):=\liminf_{x\rightarrow x_0}\frac{u(x)}{\log\|x-x_0\|}. %
\end{equation*}
This can be seen as a generalization of the vanishing order (of $e^u$) since $\nu(\log |f|,0)=\ord_{x_0} f$ for a holomorphic function $f\colon D\rightarrow \CC$. %
Since its introduction in \cites{Lelong57, Lelong68}, Lelong numbers of psh functions and (its generalized version) of closed positive currents have proven to be very crucial tools in many areas beside of pluripotential theory.
Many of its applications involve estimates on Lelong numbers. %
Let us recall a particular one which gives estimates on Lelong numbers of (mixed) Monge-Ampère products by the Lelong numbers of their potentials from below, see \cites{Demailly85, Demailly93}.

Generalizing a fundamental result from Bedford and Taylor in \cites{BT76, BT82} defining the Monge-Ampère products of bounded psh functions (cited as \autoref{thm:BT-continuity}), Demailly proved in \cites{Demailly85, Demailly93}  that 
for a closed positive current $T$ of bidegree $(p,p)$ and a plurisubharmonic function $u$ such that its unbounded locus $L(u)$ is contained in an analytic set of codimension greater than or equal to $p+k$, 
\begin{equation*}
	(dd^c u)^k\wedge T:=\lim_{\kappa\rightarrow\infty} (dd^cu_\kappa)^k\wedge T
\end{equation*}
is a closed positive current whereby $d^c:=\frac i{2\pi}(\overline{\partial}-{\partial})$ and $u_\kappa$ is a sequence of smooth psh functions decreasing pointwise to $u$ (cited as \autoref{thm:BTD}). %
To shorten the notation, we will use \BTD for the reference to such Monge-Ampère (MA) products. %

With \BTD, we can calculate the Lelong number of a closed positive $(p,p)$-current $T$ by
\begin{equation*}
	\nu(T,x_0)=\int \one_{x_0}(dd^c\log\|x-x_0\|)^{n-p}\wedge T.
\end{equation*}
We get that $\nu(dd^cu,x_0)=\nu(u,x_0)$. %
Demailly's second comparison formula for Lelong numbers (see \cites{Demailly85, Demailly93}) implies 
\begin{equation}\label{eq:IntroMotivation}
	\nu((dd^c u)^k\wedge T,x_0)\geq\nu(u,x_0)^k\cdot\nu(T,x_0)
\end{equation}
for $k+p$ as above. The purpose of this work is to show that such estimates hold for more general Monge-Ampère products as explained in the following.

\medskip

Let $X$ and $Y$ be complex manifolds, and let $\pi\colon X\rightarrow Y$ be a proper holomorphic submersion
with \dimF-dimensional fibres. %
Let $q$ be a quasipsh function, let $\alpha$ be a closed real $(1,1)$-form, and let $T$ be a closed positive $(p,p)$-current, all defined on $X$.
Furthermore, we assume that for any small enough open $V\subset Y$, there is a closed positive%
\footnote{Following the notation usual for currents, we call a $(1,1)$-form $\alpha$ positive if $\alpha\geq 0$.} %
$(1,1)$-form $\gamma$ on $U:=\pi^{-1}(V)$ such that 
$q$ is $(\alpha{+}\gamma)$-psh on $U$, \ie $dd^cq+\alpha\geq -\gamma$. This is always the case if a priori $dd^c q+\alpha$ is assumed to be positive, or if $\pi$ is Kähler, see \autoref{rk:Kaehler-pi}.
Let $q_\kappa$ be a sequence of $(\alpha{+}\gamma)$-psh functions which is decreasing pointwise to $q$.
Following \cite{LRRS}, we obtain that 
for all $k$ such that $\pi(L(q))$ is contained in an analytic set of codimension $\geq k+p-\dimF$,
the current 
\begin{equation}\label{eq:extBTD-def-intro}
	\pi_\ast\big([dd^c q+\alpha]^k\wedge T\big):=\lim_{\kappa\rightarrow\infty} \pi_\ast\big((dd^c q_\kappa+\alpha)^k\wedge T\big)
\end{equation}
is well-defined and locally the difference of two closed positive currents which is independent of the choice of  $\alpha$ and $q_\kappa$, see \autoref{pr:extBTD-def}. 
This generalizes the definition of MA products by \BTD as the $L(q)$ is allowed to be in an analytic set of codimension strictly greater than $k+p$.

\smallskip

The first main result of the present work gives an estimate on the Lelong numbers of currents defined by \eqref{eq:extBTD-def-intro}  which generalizes \eqref{eq:IntroMotivation}.

\newcommand{\FirstTime}{1}
\newcommand{\StatementFirstMainTheorem}{
	Let $\pi\colon X\rightarrow Y$ be a proper holomorphic submersion
	between complex manifolds $X$ and $Y$
	with \dimF-dimensional fibres.
	Fix a point $y\in Y$.
	Let $\theta_1\kdots \theta_t$ be positive $(1,1)$-currents on $X$ such that each $\theta_i$ is in a Kähler class on a neighbourhood of $\pi^{-1}(y)$. %
	Then, there exist positive constants $\delta_i$, for $i=1\kdots t$,
	(which only depend of the Kähler class represented by $\theta_i$ on a neighbourhood of $\pi^{-1}(y)$)
	such that the following statements are correct.

	\ManualItem{i} If the union of all images $\pi(L(\theta_i))$ of the unbounded loci of local \ddc-potentials of $\theta_i$ is contained in an analytic $A$  with $\codim A\geq k_1+\dots +k_t-\dimF$, then
	\begin{equation}\if\FirstTime1\label{eq:mainthm-1a}\else\label{eq:mainthm-1aII}\fi
		\nu\Big(\pi_\ast \big( [\theta_t]^{k_t}\wedge \cdots \wedge[\theta_1]^{k_1}\big), y\Big)
		\geq\prod\big._{i=1}^t \min\{\nu(\theta_i,x),\delta_i\}^{k_i}
	\end{equation}
	for all points $x\in \pi^{-1}(y)$ as long the LHS does not vanish due to the degree of the current.

	\ManualItem{ii} If $\bigcup_i\pi(L(\theta_i))$ is contained in an analytic $A$ with $\codim A\geq k_1+\dots +k_t-t\cdot \dimF$, then
	\begin{equation}\if\FirstTime1\label{eq:mainthm-1b}\else\label{eq:mainthm-1bII}\fi
		\nu\Big(\pi_\ast \big( [\theta_t]^{k_t}\big)\wedge \cdots \wedge\pi_\ast\big([\theta_1]^{k_1}\big), y\Big)
		\geq \prod\big._{i=1}^t \min\{\nu(\theta_i,x),\delta_i\}^{k_i}
	\end{equation}
	for all points $x\in \pi^{-1}(y)$ as long the LHS does not vanish due to the degree of the current.
	
}
\newcommand{\RepeatFirstMainTheorem}{
\begin{manualthm}{\ref{MainThm:general-positive}} %
	\StatementFirstMainTheorem	
	
\end{manualthm}}
\begin{thm}\label{MainThm:general-positive}
	\StatementFirstMainTheorem		
\end{thm}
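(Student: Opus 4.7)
The plan is to establish the estimate by localising at a point $x \in \pi^{-1}(y)$, replacing each quasi-psh potential of $\theta_i$ with a model function having an explicit logarithmic singularity at $x$, and combining Demailly's second comparison formula for Lelong numbers of mixed Monge--Amp\`ere products with a direct computation of the pushforward.

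Since each $\theta_i$ lies in a Kähler class on a neighbourhood $U$ of the compact fibre $\pi^{-1}(y)$, I would write $\theta_i = \alpha_i + dd^c u_i$ on $U$, where $\alpha_i$ is a smooth closed positive $(1,1)$-form in the class and $u_i$ is $\alpha_i$-psh; then $\nu(\theta_i, x) = \nu(u_i, x)$. The constant $\delta_i$ would be defined as a Seshadri-type threshold depending only on the restriction of $[\alpha_i]$ to a neighbourhood of $\pi^{-1}(y)$, namely the largest constant $\delta > 0$ for which an $\alpha_i$-psh function on $U$ with prescribed logarithmic singularity of coefficient $\delta$ can be produced at any chosen point of $\pi^{-1}(y)$. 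Setting $\nu_i := \min(\nu(\theta_i, x), \delta_i)$, one then replaces $u_i$ by an $\alpha_i$-psh function $v_i$ that agrees with $u_i$ off a small ball around $x$ and satisfies $v_i = \nu_i \log\|z - x\| + O(1)$ near $x$, by the standard regularised-maximum glueing of $u_i$ with $\nu_i \log \|z - x\| - C_i$ for a suitable constant $C_i$.

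To control Lelong numbers of the pushforward, I would apply Demailly's comparison formula to the smooth approximants $q_\kappa \searrow q$ in the definition \eqref{eq:extBTD-def-intro} and pass to the limit, using weak continuity of $\pi_*$ and lower semicontinuity of Lelong numbers under monotone limits. Once each $u_i$ is replaced by $v_i$, the Lelong number of the pushforward is evaluated via
\begin{equation*}
	\nu(\pi_* S, y) \;=\; \lim_{r \to 0} r^{-2(n-p)} \int_{\pi^{-1}B(y,r)} S \wedge (\pi^*\beta_Y)^{n-p},
\end{equation*}
where $p = k_1+\cdots+k_t$ and $\beta_Y$ is a local Kähler form on $Y$; localising to a small neighbourhood of $x$ and using the local mass of $(dd^c \log\|z-x\|)^{p}$ at $x$ yields the lower bound $\prod_i \nu_i^{k_i}$.

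For part (ii), I would reduce to (i) on the $t$-fold fibred product $Z := X \times_Y \cdots \times_Y X$ with common projection $\pi_Z \colon Z \to Y$, whose fibres have dimension $t\cdot m$. Writing $p_i \colon Z \to X$ for the $i$-th projection, the fibre-integration identity $(\pi_Z)_* \bigl(\bigwedge_i p_i^*[\theta_i]^{k_i}\bigr) = \bigwedge_i \pi_*[\theta_i]^{k_i}$ rewrites the LHS of \eqref{eq:mainthm-1b} as a pushforward from $Z$ covered by (i), and the codimension hypothesis $\codim A \geq \sum k_i - t \cdot m$ matches exactly what (i) on $Z$ requires. The main technical obstacle I anticipate is the transfer of Demailly's comparison formula to the extended BTD setting \eqref{eq:extBTD-def-intro}: the classical comparison is formulated for products defined by standard \BTD, while here the unbounded loci only become sufficiently thin after pushforward, so the comparison estimate must be shown to survive the limit in $\kappa$ while the codimension hypothesis on $\pi(L(\theta_i))$ is exploited to keep the mixed products well-defined along the approximation.
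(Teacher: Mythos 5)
Your outline for part (i) captures the right ingredients --- a Seshadri-type threshold tied to the Kähler class near $\pi^{-1}(y)$ (this is exactly what the paper's \autoref{lm:peaker} provides), model logarithmic singularities at $x$, and Demailly-type comparison --- but it glosses over the step that actually carries the argument, and the glossing is not innocent. Demailly's comparison formula applied to the smooth approximants $q_\kappa$ of the definition \eqref{eq:extBTD-def-intro} yields nothing, because each $q_\kappa$ has vanishing Lelong numbers. And if you instead replace each $u_i$ outright by a model $v_i$ with prescribed $\nu_i$-logarithmic singularity at $x$, you have produced a different current $\pi_\ast([dd^c v_t{+}\alpha_t]^{k_t}\wedge\cdots)$ in the same \BC-class, whose Lelong number at $y$ has no a priori relation to that of $\pi_\ast([\theta_t]^{k_t}\wedge\cdots)$. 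The paper does not make this substitution: it first regularizes each $\theta_i$ to currents $\theta_{i,\kappa}$ with \emph{analytic} singularities whose Lelong numbers at $x$ converge to $\nu(\theta_i,x)$ (\autoref{thm:DemRegAnalySing}), then applies the intermediate \autoref{MainThm:analytic-singularities}, in which the peak currents $\eta_i$ of \autoref{lm:peaker} enter only as the ``filler'' $\eta$ in the generalized Monge--Amp\`ere product $[\theta_{i,\kappa}]_{\eta_i}^{k_i}$. The key point making this legitimate is \autoref{thm:extBTD-coincides-genMA}, which identifies $\pi_\ast([\theta_{i,\kappa}]^{k_i}\wedge\cdot)$ with $\pi_\ast([\theta_{i,\kappa}]_{\eta_i}^{k_i}\wedge\cdot)$ for currents with analytic singularities, combined with the monotone continuity \autoref{thm:extBTD-continuity} in $\kappa$ and the semicontinuity \autoref{pr:Lelong-estimate-sequence}. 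The ``main technical obstacle'' you flag is precisely this, and your proposal does not resolve it.

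For part (ii), the reduction to (i) on the fibre product $Z=X\times_Y\cdots\times_Y X$ via \autoref{lm:fibre-products-and-direct-images} has a genuine gap. The hypothesis of (i) would require each pulled-back class $\pr_i^\ast\{\theta_i\}_{\BC}$ to be Kähler on a neighbourhood of the fibre $\varpi^{-1}(y)$, but $\pr_i$ has positive-dimensional fibres, so the pullback of a Kähler form is only semipositive; in particular, \autoref{lm:peaker} cannot furnish the needed currents with isolated analytic singularities in $\pr_i^\ast\{\theta_i\}_{\BC}$. The paper explicitly records this obstruction in \autoref{rdf:SegreCurrents-analySing} (``it is not clear whether such $\eta_i$ exist as $\pr_i^\ast\cO_{\PE}(1)$ is not K\"ahler near $\varpi^{-1}(\pi(\xi))$ in general''), and proves (ii) by a different route: induction on $t$, approximating the single outermost factor $\theta_t$ by $q^{(\lambda)}=\max\{q,\frac{\delta}{\delta_t}v-\lambda\}$ with an isolated singularity at $x$, then combining \autoref{pr:Lelong-estimate-pushforward}, \autoref{pr:Lelong-current-pullback}, and the continuity \autoref{thm:extBTD-continuity}. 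Until the Kähler issue on the fibre product is addressed (or sidestepped, as the paper does), your reduction does not go through.
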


\begin{rem}\label{rk:MainThmextension}
	(a) The bound $\delta_i$ is determined by whether for each $x\in\pi^{-1}(y)$, there exists a closed positive current  which is in the same cohomology class as $\theta_i$ near $\pi^{-1}(y)$, smooth everywhere except in isolated points, and whose Lelong number is greater than or equal to $\delta_i$ in $x$; see \autoref{lm:peaker}.
	For example, if $\pi$ is projective %
	and $\theta_i$ is in the class induced by the Fubini-Study metric, then we can select $\delta_i=1$. %

	\ManualItem{b} If we consider proper holomorphic submersions $\pi_i\colon X_i\rightarrow Y$, $i=1\kdots t$, and if each $\theta_i$ is defined on $X_i$ with analogous properties as in \autoref{MainThm:general-positive}, there are positive constants $\delta_i$ such that
	\begin{equation}%
		\nu\Big(\pi_{t,\ast} \big( [\theta_t]^{k_t}\big)\wedge \cdots \wedge\pi_{1,\ast}\big([\theta_1]^{k_1}\big), y\Big)
		\geq \prod\Big._{i=1}^t \min\{\nu(\theta_i,x_i),\delta_i\}^{k_i}
	\end{equation}
	for all points $x_1\in\pi_1^{-1}(y)\kdots x_t\in\pi_t^{-1}(y)$ and suitable $k_1\kdots k_t$.
\end{rem}

\smallskip

Let us sketch the proof of \autoref{MainThm:general-positive} (i).
We will use generalized Monge-Ampère products for currents with analytic singularities which were introduced and studied in \cites{Andersson05,AW14-MA,ASWY17,ABW,LRSW,Blocki19-quasiMA,LSW} and the present work as explained as follows. %

If a quasipsh function $q$ equals locally $c\log\|F\|^2+b$ for a positive constant $c$, holomorphic tuple $F$ and bounded function $b$, $q$ is said to have analytic singularities in $\{F=0\}$ (given as reduced analytic set). If moreover $b$ is smooth, $q$ has so-called neat analytic singularities.
We say that a closed quasipos $(1,1)$-current $\theta$ has (neat) analytic singularities in $Z$ if it is locally given by $\theta=dd^c q$ for a quasipsh $q$ with (neat) analytic singularities in $Z$.
For $i=1\kdots t$, let $\theta_i$ be closed quasipos $(1,1)$-currents with analytic singularities in $Z_i$, and let $\alpha_i$ be (closed) real $(1,1)$-forms. Then, the \emph{generalized Monge-Ampère} product $[\theta_t]_{\alpha_t}^{k_t}\wedge \cdots \wedge[\theta_1]_{\alpha_1}^{k_1}$ for any $k_1\kdots k_t$ is defined recursively by
\begin{equation}\label{eq:IntroDefGenMA}
	[\theta_i]_{\alpha_i}\wedge T:=\theta_i\wedge\one_{Z^c} T+\alpha_i\wedge\one_{Z} T.
\end{equation}
This definition works also if we replace $\alpha_i$ by quasipositive $(1,1)$-currents $\eta_i$ with neat analytic singularities in isolated points since the last term in \eqref{eq:IntroDefGenMA} is defined by \BTD, see \autoref{def:genMA}.
This %
will be useful to get estimates for the Lelong number of such MA products, see \autoref{pr:Lelong-estimate-AW-products} and \autoref{rk:Lelong-estimate-AW-products-needs-eta}.
Although the generalized Monge-Ampère product lacks some properties of a product, it is a suitable extension of the \BTD Monge-Ampère product to arbitrary degrees independent of the unbounded locus of $q$. By \autoref{thm:extBTD-coincides-genMA} (\cf \cite{LRSW}*{Thm~1.2}), we get that if $\eta_i$ are in the same \BC-class as $\theta_i$, and if $\bigcup_i\pi(L(\theta_i))$ is contained in an analytic $A$ with $\codim A\geq k_1+\dots +k_t- \dimF$, then %
\begin{equation}\label{eq:extBTDgMaIntro}
	\pi_\ast \big( [\theta_t]^{k_t}\wedge \cdots \wedge[\theta_1]^{k_1}\big)=\pi_\ast\big([\theta_t]_{\eta_t}^{k_t}\wedge \cdots \wedge[\theta_1]_{\eta_1}^{k_1}\big).
\end{equation}

Therefore, the idea behind the proof of \autoref{MainThm:general-positive} is to use Demailly's regularization \cite{Demailly92-reg-closed-pos-currents}*{Main Thm~1.1} (see also \autoref{thm:DemRegAnalySing}) which gives us that the (quasi-) positive $(1,1)$-currents $\theta_i$ can be approximated by sequences of quasipositive $(1,1)$-currents $\theta_{i,\kappa}$ with analytic singularities (and other suitable properties). %
By the monotone continuity result \autoref{thm:extBTD-continuity}, %
we get %
\begin{equation}\label{eq:extBTDContIntro}%
	\pi_\ast\big([\theta_t]^{k_t}\wedge \cdots \wedge[\theta_1]^{k_1}\big)%
	:=\lim_{\kappa\rightarrow\infty}
	\pi_\ast\big([\theta_{t,\kappa}]^{k_t}\wedge \cdots \wedge[\theta_{1,\kappa}]^{k_1}\big)
\end{equation} %
(actually for any approximation). %
We would like to add that \autoref{thm:extBTD-continuity} implies
\autoref{cr:extBTD-continuity-special} which asserts that for certain $k_1\kdots k_t$,
\pagebreak[0]
\begin{equation*}%
	\pi_\ast [\theta_t]^{k_t}\wedge \cdots \wedge\pi_\ast[\theta_1]^{k_1}
	:=\lim_{\kappa\rightarrow\infty}
	\pi_\ast[\theta_{t,\kappa}]^{k_t}\wedge \cdots \wedge\pi_\ast[\theta_{1,\kappa}]^{k_1}.
\end{equation*} %
We obtain that \autoref{MainThm:general-positive} is implied by
\eqref{eq:extBTDContIntro}, \eqref{eq:extBTDgMaIntro} and
the following theorem, which is the second main result of the present work. %

\newcommand{\StatementSecondMainTheorem}{
	Let $\pi\colon X\rightarrow Y$ be a proper holomorphic submersion
	between complex manifolds $X$ and $Y$
	with \dimF-dimensional fibres.
	Fix a point $y\in Y$.
	Let $\theta_1\kdots \theta_t$ be positive $(1,1)$-currents with analytic singularities on $X$ such that each $\theta_i$ is in a Kähler class on a neighbourhood of $\pi^{-1}(y)$. %
	Then, there exist positive constants $\delta_i$, $i=1\kdots t$,
	(which only depend of the Kähler class represented by $\theta_i$ on a neighbourhood of $\pi^{-1}(y)$)
	such that the following is correct.

	\ManualItem{i} \emph{Existence of $\eta_i$:}
	For every $x\in \pi^{-1}(y)$, 
	there exist closed quasipos $(1,1)$-currents $\eta_{i}=\eta_{i,x}\in\{\theta_i\}_{\BC}$, positive near $\pi^{-1}(y)$, with neat analytic singularities only in $\{x\}$ %
	(and %
	with $\nu(\eta_i,x)= \delta_i$) such that
	\begin{equation}\if\FirstTime1\label{eq:mainthm-2a}\else\label{eq:mainthm-2aII}\fi
		\nu\Big(\pi_\ast \big( [\theta_t]_{\eta_t}^{k_t}\wedge \cdots \wedge[\theta_1]_{\eta_1}^{k_1}\big), y\Big)
		\geq\ \prod\big._{i=1}^t \min\{\nu(\theta_i,x),\delta_i\}^{k_i}%
	\end{equation}
	for all $m\leq k_1+\ldots+k_t\leq\dim X$.
	Thereby, 	$\eta_i=\eta_{i,x}$ depends only of the Kähler class of $\theta_i$ near $\pi^{-1}(y)$ and $x$.

	\ManualItem{ii} \emph{Independency of $\eta_i$:}
	For $i=1\kdots t$, let $\eta_i\in\{\theta_i\}_{\BC}$ be closed quasipos $(1,1)$-currents  with neat analytic singularities in isolated points on $X$ such that there is a point $x\in X$ %
	with $L(\eta_i)\cap\pi^{-1}(y)=\{x\}$ and $\nu(\eta_i,x)\leq\delta_i$. Then,
	\begin{equation}\if\FirstTime1\label{eq:mainthm-2b}\else\label{eq:mainthm-2bII}\fi
		\nu\Big(\pi_\ast \big( [\theta_t]_{\eta_t}^{k_t}\wedge \cdots \wedge[\theta_1]_{\eta_1}^{k_1}\big), y\Big)
		\geq\ \prod\big._{i=1}^t \min\{\nu(\theta_i,x),\nu(\eta_i,x)\}^{k_i}%
	\end{equation}
	for all $m\leq k_1+\ldots+k_t\leq\dim X$.
}
\newcommand{\RepeatSecondMainTheorem}{
\begin{manualthm}{\ref{MainThm:analytic-singularities}} %
	\StatementSecondMainTheorem	
\end{manualthm}}
\begin{thm}\label{MainThm:analytic-singularities}
	\StatementSecondMainTheorem
\end{thm}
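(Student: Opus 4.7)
My approach would be to derive part (i) from part (ii) by invoking the peaker construction referenced in Remark \ref{rk:MainThmextension}(a) (the forthcoming Lemma \ref{lm:peaker}), which for every $x\in\pi^{-1}(y)$ produces a closed positive $(1,1)$-current $\eta_{i,x}\in\{\theta_i\}_{\BC}$ that is smooth outside $\{x\}$, positive near $\pi^{-1}(y)$, and has Lelong number exactly $\delta_i$ at $x$. Inserting these peakers into \eqref{eq:mainthm-2b} yields precisely \eqref{eq:mainthm-2a}. Hence the main task is to establish (ii).

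For (ii) the strategy is to reduce to Demailly's second comparison theorem for ordinary Monge-Amp\`ere products by replacing each pair $(\theta_i,\eta_i)$ with a single closed quasipositive $(1,1)$-current whose Lelong number at $x$ is $\min\{\nu(\theta_i,x),\nu(\eta_i,x)\}$. Since $\eta_i\in\{\theta_i\}_{\BC}$, near $\pi^{-1}(y)$ we may write $\theta_i=\gamma_i+dd^cu_i$ and $\eta_i=\gamma_i+dd^cv_i$ with $\gamma_i$ a common smooth representative and $u_i$, $v_i$ quasipsh having analytic singularities in $Z_i$ and $\{x\}$ respectively. Set $w_i:=\max\{u_i-c_1,v_i-c_2\}$ for constants arranged so that $w_i=v_i-c_2$ on a small neighborhood of $x$ in $Z_i$ (where $v_i\to-\infty$) and $w_i=u_i-c_1$ away from that neighborhood. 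Writing $\theta_i':=\gamma_i+dd^cw_i$, the standard identity $\nu(\max\{u,v\},x)=\min\{\nu(u,x),\nu(v,x)\}$ gives $\nu(\theta_i',x)=\min\{\nu(\theta_i,x),\nu(\eta_i,x)\}$.

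The next step is to show that the Lelong mass of the generalized product at $x$ dominates that of the standard product of the $\theta_i'$s. Because $\eta_i$ has neat analytic singularities only at isolated points and $L(\eta_i)\cap\pi^{-1}(y)=\{x\}$, the ordinary product $(dd^cw_t)^{k_t}\wedge\cdots\wedge(dd^cw_1)^{k_1}$ is defined in the \BTD sense, and Demailly's second comparison theorem gives
\begin{equation*}
\nu\big((\theta_t')^{k_t}\wedge\cdots\wedge(\theta_1')^{k_1},x\big)\ \geq\ \prod\big._{i=1}^t\min\{\nu(\theta_i,x),\nu(\eta_i,x)\}^{k_i}.
\end{equation*}
Using the recursive definition \eqref{eq:IntroDefGenMA}, one argues iteratively that on the part where $[\theta_i]_{\eta_i}$ uses $\eta_i$ (i.e.\ on $Z_i$ near $x$) the max-construction reproduces $\eta_i$ locally, whereas on $Z_i^c$ it reproduces $\theta_i$; combined with the basic fact that $\one_Z S$ for a closed positive $(p,p)$-current $S$ carries Lelong mass at $x\in Z$ equal to $\nu(S,x)$, this comparison transfers the bound to the generalized product. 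Finally I would pass to $y$ using that for a proper holomorphic submersion and any closed positive (or locally the difference of two closed positive) $(p,p)$-current $S$ on $X$ of matching bidegree, $\nu(\pi_\ast S,y)\geq\nu(S,x)$ for each $x\in\pi^{-1}(y)$; this follows from $\nu(\pi_\ast S,y)=\int\one_y(dd^c\log\|\cdot-y\|)^{n-m-p'}\wedge\pi_\ast S$, the projection formula, and a local-coordinate comparison with $\pi^\ast(dd^c\log\|w-y\|)$ near $x$ using that $\pi$ is a submersion.

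\textbf{Main obstacle.} The hard step is the comparison between the recursive operator $[\theta_i]_{\eta_i}\wedge(\cdot)$ and ordinary wedge with $\theta_i'=\gamma_i+dd^cw_i$ at the level of Lelong numbers. The recursion involves the asymmetric decomposition along the analytic set $Z_i$ of $\theta_i$ (not of $\eta_i$), and one must verify that this decomposition is genuinely dominated in Lelong mass at $x$ by the smooth-max construction through every level of the iteration. A clean way to handle this is to prove a standalone lemma of the form ``$\nu([\theta]_\eta\wedge T,x)\geq\min\{\nu(\theta,x),\nu(\eta,x)\}\cdot\nu(T,x)$'' whenever $\eta$ has isolated analytic singularities at $x$ and $T$ is a closed positive current with sufficiently small unbounded locus, and then iterate it. Verifying this lemma is where the real work lies.
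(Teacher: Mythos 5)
Your proposal has the right ingredients — the peaker construction (\autoref{lm:peaker}), the pointwise Lelong bound for generalized MA products with isolated-singularity correctors (what becomes \eqref{eq:LelongEstgMA} in \autoref{pr:Lelong-estimate-AW-products}), and a pushforward Lelong estimate (\autoref{pr:Lelong-estimate-pushforward}) — but the direction of the argument is reversed relative to the paper, and that reversal creates a genuine gap.

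You propose to prove part (ii) directly and then deduce (i) by inserting the peaker currents. The problem is in your final step, where you write that $\nu(\pi_\ast S, y)\geq\nu(S,x)$ holds for ``any closed positive (or locally the difference of two closed positive) $(p,p)$-current $S$''. This is false for signed currents: if $S=S_+-S_-$, then $\nu(\pi_\ast S,y)=\nu(\pi_\ast S_+,y)-\nu(\pi_\ast S_-,y)$ and the negative part can pick up \emph{more} Lelong mass under pushforward, destroying the inequality. The paper's \autoref{rk:pushforward-Lelong-estimate-fails-for-non-pos} is precisely a counterexample to this. In part (ii) the $\eta_i$ are only assumed quasipositive, so $[\theta_t]_{\eta_t}^{k_t}\wedge\cdots\wedge[\theta_1]_{\eta_1}^{k_1}$ is in general only quasipositive near $\pi^{-1}(y)$, and your step cannot be justified.

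The paper goes the other way for exactly this reason. Part (i) is proved directly: the peaker currents from \autoref{lm:peaker} are constructed to be \emph{positive} in a neighbourhood of $\pi^{-1}(y)$, so the generalized MA product is positive there, and one may apply \autoref{pr:Lelong-estimate-pushforward} (positivity hypothesis satisfied) after \autoref{pr:Lelong-estimate-AW-products}. Part (ii) is then deduced from (i) via \autoref{pr:pushforward-Lelong-indep-of-sp-eta}: any admissible $\eta_i$ can be swapped for a positive peaker with the same Lelong number at $x$ without changing the Lelong number of the pushforward. Your sketch also does not verify the key pointwise lemma (you note that yourself); the paper proves it via a $\max\{u,v-\kappa\}$ construction and upper semicontinuity, which is close in spirit to your $w_i$-idea but handles the asymmetric cutoff along $Z_i$ carefully.

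To repair your write-up, reverse the logical order: prove (i) first with the positive peakers supplied by \autoref{lm:peaker}, then invoke an independence-of-$\eta_i$ result (recursively applying something like \autoref{pr:pushforward-Lelong-indep-of-sp-eta}) to reduce (ii) to (i). And restrict your pushforward Lelong estimate to currents positive near the fibre $\pi^{-1}(y)$.
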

\renewcommand{\FirstTime}{0}

\autoref{rk:MainThmextension} (a) applies to \autoref{MainThm:analytic-singularities}, as well.

The idea to consider $\eta_i$ with singularities in isolated points is inspired by X. Wu's approach to choose specific non-smooth approximations of Segre currents to obtain Lelong number estimates of them, see \cite{WuX22}*{Proof of Thm~2}.

Following the arguments in the proof of \cite{LRSW}*{Thm~1.1 (3)}, the Lelong numbers of $\pi_\ast\big([\theta_t]_{\alpha_t}^{k_t}\wedge \cdots \wedge[\theta_1]_{\alpha_1}^{k_1}\big)$ are independent of $\alpha_i$ among closed forms in the same class as $\theta_i$. Unfortunately, this is not correct if we replace $\alpha_i$ by $\eta_i$ with different kind of isolated singularities, see \autoref{rk:pushforward-Lelong-not-indep-of-eta}.
By \autoref{pr:pushforward-Lelong-indep-of-sp-eta}, we get at least that
$\pi_\ast \big( [\theta_t]_{\eta_t}^{k_t}\wedge \cdots \wedge[\theta_1]_{\eta_1}^{k_1}\big)$'s Lelong numbers are independent of $\eta_i$ among currents with the same kind of singularities in all points.

\bigskip

By applying \autoref{MainThm:analytic-singularities} to Segre currents defined by \eqref{eq:DefAWSegre1F} as in \cite{LRSW}*{Thm~1.1}, %
we obtain the following corollary; %
see \autoref{rdf:SegreCurrents-analySing}.

\begin{cor}\label{cr:application-analy-sing}
	Let $X$ be a complex manifold, let $E\rightarrow X$ be a (pseudoeffective) holomorphic vector bundle on $X$ of rank \rkE, and
	let $e^{-\ph}$ be a semipositive singular metric on $\cO_{\PP(E)}(1)$ with analytic singularities, \ie the weights $\ph$ are psh with analytic singularities.
	Then, for every $x\in X$, $\xi\in \pi^{-1}(x)$ and every $k\leq \dim X$, there exists a closed real $(k,k)$-current $S=S_\xi$ in the Segre class $(-1)^ks_{k}(E)$ such that (i) locally $S$ is the difference of two closed positive currents, (ii) $S$ is positive close to $x$, and (iii) 
	its Lelong number can be estimated from below by \[\nu(S,x)\geq \min\{\nu(\ph,\xi), 1\}^{k+tr-t}.\] %
\end{cor}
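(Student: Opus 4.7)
The plan is to realise $S_\xi$ as a pushforward of a generalized Monge-Amp\`ere product on the projective bundle $\pi\colon \PE\to X$ and then invoke \autoref{MainThm:analytic-singularities} with $t=1$. First I would set $\theta:=dd^c\ph$, a closed positive $(1,1)$-current on $\PE$ with analytic singularities, representing $c_1(\cO_{\PE}(1))$. Since $\cO_{\PE}(1)$ is $\pi$-ample, a relative Fubini--Study representative shows that the class of $\theta$ is K\"ahler on a neighbourhood of each fibre $\pi^{-1}(x)$, placing us in the hypotheses of \autoref{MainThm:analytic-singularities}, with associated constant $\delta_1=1$ by \autoref{rk:MainThmextension}(a). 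Fixing $\xi\in\pi^{-1}(x)$ and taking $k_1:=k+\rkE-1$, the admissibility range $m\leq k_1\leq \dim\PE$ of the theorem reads $\rkE{-}1\leq k{+}\rkE{-}1\leq \dim X+\rkE{-}1$, i.e.\ $0\leq k\leq\dim X$, which holds by hypothesis.

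Then I would apply \autoref{MainThm:analytic-singularities}(i) with $\theta_1=\theta$ and $k_1=k+\rkE-1$ to produce a closed quasipositive current $\eta=\eta_\xi\in\{\theta\}_{\BC}$, positive near $\pi^{-1}(x)$ and with neat analytic singularities only at $\xi$, such that
\begin{equation*}
    S_\xi\ :=\ \pi_{\ast}\bigl([\theta]_{\eta}^{k+\rkE-1}\bigr)
\end{equation*}
satisfies $\nu(S_\xi,x)\geq\min\{\nu(\ph,\xi),1\}^{k+\rkE-1}$, which yields property (iii). Property (i) follows from the recursive definition \eqref{eq:IntroDefGenMA}: it expresses $[\theta]_\eta^{k+\rkE-1}$ locally as the difference of two closed positive currents (the summand involving $\eta$ is well-defined by \BTD thanks to $\eta$'s isolated analytic singularities), and this property is inherited by the proper pushforward $\pi_\ast$. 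Property (ii) follows because on a neighbourhood of $\pi^{-1}(x)$ both $\theta$ and $\eta$ are positive, so the entire generalized MA product is positive there and hence $S_\xi$ is positive on the corresponding neighbourhood of $x$.

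It remains to verify that $S_\xi$ lies in the Segre class $(-1)^k s_k(E)=\pi_{\ast}\bigl(c_1(\cO_{\PE}(1))^{k+\rkE-1}\bigr)$. This is where I expect the main obstacle to lie: the generalized MA product is not the usual wedge, so its $\BC$-class has to be tracked through the recursive construction. The decisive fact is that, modulo $\BC$ of local potentials, the class of $[\theta]_\eta^{k+\rkE-1}$ depends only on the $\BC$-classes of its factors, which is the cohomological content of the Segre-current construction \eqref{eq:DefAWSegre1F} (\cf \cite{LRSW}*{Thm~1.1}); combined with $\eta\in\{\theta\}_{\BC}$, this identifies $S_\xi$ with the required representative of $(-1)^k s_k(E)$.
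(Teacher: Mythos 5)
Your proof is correct and follows essentially the same route as the paper's (which is implicit in Remark/Def.~\ref{rdf:SegreCurrents-analySing}): represent $S_\xi$ as $\pi_\ast\bigl([dd^c\ph]_{\eta_\xi}^{k+r-1}\bigr)$ with $\eta_\xi$ produced by \autoref{lm:peaker} and invoke the Lelong estimate for pushforwards of generalized MA products. Invoking \autoref{MainThm:analytic-singularities}(i) with $t=1$ is just a packaged form of the paper's appeal to \autoref{cr:Lelong-estimate-pushforwards-AW-products} together with \autoref{lm:peaker}, so the two arguments coincide.
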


Analogously, \autoref{MainThm:general-positive} gives us estimates for the Lelong numbers of Segre currents defined by \autoref{thm:def-SegreCurrents-general}, following \cite{LRRS}*{Prop.~4.6}.%
\begin{cor}\label{cr:appl-general-positive}
	Let $X$ be a complex manifold of dimension \dimX,
	let $E\rightarrow X$ be a (pseudoeffective) holomorphic vector bundle on $X$ of rank \rkE, and
	let $e^{-\ph}$ be a semipositive singular metric on $\cO_{\PP(E)}(1)$ such that $\pi(L(\ph))$ is contained in an analytic set of codimension $s$.
	Then, for all partitions $k_1+\ldots+ k_t=k\leq s$, all $x\in X$ and all $\xi\in \pi^{-1}(x)$, we get
	\begin{equation*}
		(-1)^k\nu(s_{k_t}(E,\ph)\wedge \cdots \wedge s_{k_1}(E,\ph),x)
		\geq
		\min\{\nu(\ph,\xi),1 \}^{k+tr-t}. %
	\end{equation*}
\end{cor}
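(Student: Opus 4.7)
The plan is to reduce \autoref{cr:appl-general-positive} to \autoref{MainThm:general-positive}(ii) by recognising the Segre currents $s_{k_i}(E,\ph)$ as iterated pushforwards along the projectivisation $\pi\colon\PE\to X$ (whose fibres have dimension $\dimF:=\rkE-1$). I would set $\theta:=\ddc\ph$, the curvature current of $e^{-\ph}$ on $\cO_{\PE}(1)$, which is a closed positive $(1,1)$-current on $\PE$. By \cite{LRRS}*{Prop.~4.6}, recalled as \autoref{thm:def-SegreCurrents-general}, one has the identification $s_{k_i}(E,\ph)=(-1)^{k_i}\pi_\ast\bigl([\theta]^{k_i+\rkE-1}\bigr)$, so that with the shifted exponents $K_i:=k_i+\rkE-1$
\begin{equation*}
    (-1)^k\,s_{k_t}(E,\ph)\wedge\cdots\wedge s_{k_1}(E,\ph)
    =\pi_\ast\bigl([\theta]^{K_t}\bigr)\wedge\cdots\wedge\pi_\ast\bigl([\theta]^{K_1}\bigr).
\end{equation*}

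Next, I would verify the hypotheses of \autoref{MainThm:general-positive}(ii) applied with $\theta_i:=\theta$ for every $i$ and exponents $K_i$. A direct calculation gives $\sum_iK_i-t\dimF=(k+t(\rkE-1))-t(\rkE-1)=k$, so the codimension requirement $\codim A\geq\sum_iK_i-t\dimF$ reduces to $\codim A\geq k$, which holds by assumption because $\pi(L(\theta))=\pi(L(\ph))$ is contained in an analytic set of codimension $s\geq k$; the inequality $k\leq s\leq\dimX$ also ensures that the LHS is not forced to vanish for degree reasons. Since $\cO_{\PE}(1)$ is relatively ample, its first Chern class admits a Kähler representative on $\pi^{-1}(V)$ for every sufficiently small neighbourhood $V\ni x$, and fibrewise this representative is Fubini--Study, so \autoref{rk:MainThmextension}(a) applies and permits the choice $\delta_i=1$.

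Finally, invoking \autoref{MainThm:general-positive}(ii) and using $\nu(\ddc\ph,\xi)=\nu(\ph,\xi)$ together with $\sum_iK_i=k+t\rkE-t$, the estimate becomes, for any $\xi\in\pi^{-1}(x)$,
\begin{equation*}
    (-1)^k\,\nu\bigl(s_{k_t}(E,\ph)\wedge\cdots\wedge s_{k_1}(E,\ph),x\bigr)
    \geq\prod_{i=1}^t\min\{\nu(\ph,\xi),1\}^{K_i}
    =\min\{\nu(\ph,\xi),1\}^{k+t\rkE-t},
\end{equation*}
which is exactly the claim. The one substantive point in this plan is the relative Kähler-class verification (and the resulting $\delta_i=1$), but this is standard for projective bundles and is handled identically in the parallel analytic-singularities statement \autoref{cr:application-analy-sing}; beyond that, everything reduces to the dimensional bookkeeping $k_i\mapsto K_i=k_i+\rkE-1$ already baked into the pushforward definition of the Segre currents.
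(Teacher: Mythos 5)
Your proof is correct and follows exactly the route the paper intends: the paper gives no separate proof for this corollary, merely remarking that it follows from \autoref{MainThm:general-positive} via the pushforward definition of the Segre currents from \autoref{thm:def-SegreCurrents-general}, which is precisely what you have made explicit. The shift $k_i\mapsto K_i=k_i+r-1$, the bookkeeping $\sum_iK_i-t(r-1)=k$, the sign cancellation $(-1)^k\cdot(-1)^{\sum k_i}=1$, and the identification of $\delta_i=1$ via the Fubini--Study class (as in \autoref{lm:peaker} and \autoref{rk:MainThmextension}(a)) are all in order.
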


Furthermore, \autoref{cr:appl-general-positive} implies the following sufficient condition for $E$ being nef.
\newcommand{\StatementThirdIntroCor}{
	Let $X$ be a compact complex manifold of dimension \dimX,
	let $E\rightarrow X$ be a (pseudoeffective) holomorphic vector bundle on $X$,
	and
	let $e^{-\ph}$ be a semipositive singular metric on $\cO_{\PP(E)}(1)$ such that $\pi(L(\ph))$ is contained in an analytic set of codimension $s$.
	If there are $k_1\kdots k_t$ with $k_1+\ldots+k_t\leq s$ and $s_{k_1}(E)\cdots s_{k_t}(E)=0$, then $E$ is nef.
}
\newcommand{\RepeatThirdIntroCor}{
\begin{manualcor}{\ref{cr:positive-Segre-current-in-vanishing-class-gives-nef}} %
	\StatementThirdIntroCor	
\end{manualcor}}
\begin{cor}\label{cr:positive-Segre-current-in-vanishing-class-gives-nef}
	\StatementThirdIntroCor
\end{cor}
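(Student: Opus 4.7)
The plan is to argue by contradiction, using \autoref{cr:appl-general-positive} together with the classical fact that a closed positive current in the trivial cohomology class on a compact K\"ahler manifold vanishes identically.

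First I would assume that $E$, equivalently $\cO_{\PP(E)}(1)$, is not nef, and produce a point $\xi\in\PP(E)$ with $\nu(\ph,\xi)>0$. Since $\cO_{\PP(E)}(1)$ is pseudoeffective, its minimal singular metric $\ph_{\min}$ must have a strictly positive Lelong number at some point --- otherwise Demailly's regularization would approximate $\ph_{\min}$ by smooth weights with curvature $\geq-\eps\omega$ for every $\eps>0$, witnessing nefness of $\cO_{\PP(E)}(1)$. Because every semipositive weight has Lelong numbers at least as large as those of $\ph_{\min}$, the given $\ph$ satisfies $\nu(\ph,\xi)\geq\nu(\ph_{\min},\xi)>0$ at the same $\xi$. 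Setting $x:=\pi(\xi)$, we have $x\in\pi(L(\ph))\subset A$.

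Next I would apply \autoref{cr:appl-general-positive} with $k:=k_1+\dots+k_t\leq s=\codim A$ at the pair $x,\xi$ to obtain
\[
\nu\bigl((-1)^k s_{k_t}(E,\ph)\wedge\cdots\wedge s_{k_1}(E,\ph),\,x\bigr)\geq\min\{\nu(\ph,\xi),1\}^{k+tr-t}>0,
\]
and denote the current in parentheses by $T$. Each factor $(-1)^{k_i}s_{k_i}(E,\ph)$ is the pushforward by $\pi$ of a Monge-Amp\`ere power of the closed positive $(1,1)$-current $dd^c\ph+\alpha\geq 0$ (for a smooth $\alpha$ in $c_1(\cO_{\PP(E)}(1))$), hence closed positive on $X$. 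The codimension bound $k\leq s$ on $\pi(L(\ph))$ then ensures that the wedge of these pushforwards is well-defined and closed positive in the Bedford-Taylor-Demailly sense, and its cohomology class equals $(-1)^k s_{k_1}(E)\cdots s_{k_t}(E)=0$ by hypothesis.

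To close, I would invoke (under the K\"ahler hypothesis implicit in the paper's setting) the fact that a closed positive current on a compact K\"ahler manifold representing the zero class must vanish identically. This forces $T=0$, contradicting $\nu(T,x)>0$, so $E$ must be nef. The hardest step is the first one: converting the non-nefness of $\cO_{\PP(E)}(1)$ into a strictly positive Lelong number for the given $\ph$ via Demailly's regularization. A secondary delicate point is verifying in Step 2 that the wedge of the pushforward currents is genuinely closed \emph{positive} (rather than only a local difference of two positive currents) under the codimension hypothesis $\sum k_i\leq s$.
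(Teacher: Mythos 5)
Your argument is essentially the contrapositive of the paper's direct proof (both rest on the same three pillars: (a) a closed positive current in a trivial cohomology class on a compact manifold vanishes, (b) the Lelong-number estimate of \autoref{cr:appl-general-positive}, and (c) zero Lelong numbers for $\ph$ plus Demailly's regularization plus Richberg imply nefness of $\cO_{\PE}(1)$), but two points need attention.

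\emph{First}, you explicitly invoke a Kähler hypothesis to conclude that a closed positive current in the zero class vanishes. But the paper remarks immediately after the statement that $X$ does \emph{not} need to be Kähler for this corollary; your proposal therefore proves only a restricted version, and you should not import an assumption the statement does not carry. \emph{Second}, the detour through the minimal singular metric $\ph_{\min}$ is an unnecessary layer that also conceals the real work. The step ``non-nef $\Rightarrow\nu(\ph_{\min},\cdot)\not\equiv 0$'' is precisely the contrapositive of ``$\nu(\ph_{\min},\cdot)\equiv 0\Rightarrow$ nef,'' which itself requires Demailly's regularization \emph{followed by} Richberg's approximation: Demailly regularization (\autoref{thm:DemRegAnalySing} or \cite{Demailly92-reg-closed-pos-currents}*{Prop.\ 3.7}) produces weights with analytic singularities, not smooth weights; one then needs the observation ``vanishing Lelong numbers $+$ analytic singularities $\Rightarrow$ $L(\ph_\eps)=\emptyset$'' to make Richberg applicable. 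The paper runs this chain once, directly on $\ph$: $S=0$ gives $\nu(S,\cdot)=0$, then \autoref{cr:appl-general-positive} forces $\nu(\ph,\xi)=0$ for all $\xi$, then regularization gives $\ph_\eps$ with analytic singularities, $dd^c\ph_\eps\geq-2\eps\omega_{\PE}$ and $\nu(\ph_\eps)\leq\nu(\ph)=0$, so $L(\ph_\eps)=\emptyset$ and Richberg finishes. Your sketch glosses the ``Demailly gives analytic singularities, Richberg gives smooth'' bridge and doubles the use of that machinery by embedding it inside the $\ph_{\min}$ claim; either state the bridge explicitly or drop $\ph_{\min}$ and argue as the paper does.
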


Let us point out that $X$ does not need to be Kähler in \autoref{cr:positive-Segre-current-in-vanishing-class-gives-nef}.
Yet, the condition on $\pi(L(\ph))$, which gives us the existence of the positive Segre currents, can be seen as quite strong.
Assuming that $X$ is Kähler, we can weaken this condition by using the non-nef locus %
which is defined by
\begin{equation*}
	L_\nnef(E):=\pi\big(L_\nnef(\cO_{\PE}(1))\big)=\pi\Big(\bigcup\big._{\delta>0} \bigcap\big._{\theta}\,E_{+}(\theta)\Big)
\end{equation*}
where the intersection runs over all closed $\delta\omega_\PE$-positive $(1,1)$-currents $\theta\in\{\omega_\PE\}=\cO_{\PE}(1)$ for a Kähler form $\omega_\PE$ on \PE, and  $E_+(\theta):=\{\xi\in\PE\colon\nu(\theta,\xi)>0 \}$. %
We obtain the following generalization of X.~Wu's main result in \cite{WuX22}. %

\newcommand{\StatementFourthIntroCor}{
	Let $(X,\omega)$ be a compact Kähler manifold, %
	and 	let $E$ be a pseudoeffective vector bundle on $X$ %
	such that $L_\nnef(E)$ is contained in a countable union of analytic sets of codimension $s$. %
	If there is a $k\leq s$ with  $c_1(E)^k = ({-}s_1(E))^k = 0$, %
	then $E$ is nef.
}
\newcommand{\RepeatFourthIntroCor}{
\begin{manualthm}{\ref{thm:vanishing-class-and-more-give-nef}} %
	\StatementFourthIntroCor	
\end{manualthm}}
\begin{thm}\label{thm:vanishing-class-and-more-give-nef}
	\StatementFourthIntroCor		
\end{thm}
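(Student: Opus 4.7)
The plan is to argue by contradiction, combining \autoref{MainThm:general-positive}~(ii) with the cohomology-mass dichotomy for positive currents on compact Kähler manifolds that already underlies \autoref{cr:positive-Segre-current-in-vanishing-class-gives-nef}. Assume $E$ is not nef and pick $\xi_0\in L_\nnef(\cO_\PE(1))$ with $y_0:=\pi(\xi_0)\in\bigcup_j A_j$. By the definition of the non-nef locus, there exists $\delta_0>0$ such that every closed $\delta_0\omega_\PE$-positive $\theta\in c_1(\cO_\PE(1))$ has $\nu(\theta,\xi_0)>0$; then the minimal $\delta_0\omega_\PE$-positive current $T_{\min,\delta_0}$ (upper semicontinuous regularization of the upper envelope of normalized potentials) satisfies $\nu_0:=\nu(T_{\min,\delta_0},\xi_0)>0$, and $\nu(T_{\min,\delta},\xi_0)\geq\nu_0$ for all $\delta\leq\delta_0$ by monotonicity.

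The key input from the countable-union hypothesis is that the Siu sublevel sets $E_c^\delta:=\{\nu(T_{\min,\delta},\cdot)\geq c\}$ are analytic in $\PE$ and contained in $L_\nnef(\cO_\PE(1))\subset\bigcup_j\pi^{-1}(A_j)$. The Baire-type fact that an irreducible analytic set lying in a countable union of codim-$s$ analytic sets must lie in one of them forces $\codim E_c^\delta\geq s$ in $\PE$, and similarly $\codim\pi(E_c^\delta)\geq s$ in $X$ (as a proper image of an analytic set). Applying Demailly's regularization \cite{Demailly92-reg-closed-pos-currents} to $T_{\min,\delta}$ produces a closed current $T_{m,\delta}\in c_1(\cO_\PE(1))$ with analytic singularities contained in some $E_{c_m}^\delta$, satisfying $T_{m,\delta}\geq-(\delta+\epsilon_m)\omega_\PE$ for some $\epsilon_m\to 0$ and $\nu(T_{m,\delta},\xi_0)\to\nu(T_{\min,\delta},\xi_0)$. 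The shift $\tilde\theta_{m,\delta}:=T_{m,\delta}+(\delta+\epsilon_m)\omega_\PE$ is then a closed positive $(1,1)$-current on $\PE$ in the nearby class $\alpha_{m,\delta}:=c_1(\cO_\PE(1))+(\delta+\epsilon_m)[\omega_\PE]$ whose singular locus projects into a single codim-$s$ analytic subset of $X$.

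Since $\cO_\PE(1)$ is ample on every fibre, the class $\alpha_{m,\delta}$ is Kähler on a neighbourhood of $\pi^{-1}(y_0)$ uniformly for small $\delta+\epsilon_m$, supplying a uniform peaker constant $\delta_K>0$ via \autoref{rk:MainThmextension}~(a). Then \autoref{MainThm:general-positive}~(ii) applied with $t=k$ copies $\theta_i=\tilde\theta_{m,\delta}$ and all $k_i=r$ (so that the codim condition $\sum k_i-t(r-1)=k\leq s$ is satisfied) gives
\begin{equation*}
\nu\!\Big(\bigwedge\nolimits_{i=1}^{k}\pi_\ast[\tilde\theta_{m,\delta}]^{r},\ y_0\Big)\ \geq\ \min\{\nu(\tilde\theta_{m,\delta},\xi_0),\ \delta_K\}^{kr}.
\end{equation*}
Choosing $(\delta_n,m_n)\to(0,\infty)$ with $\nu(\tilde\theta_{m_n,\delta_n},\xi_0)\geq\nu_0/2$ produces a uniform lower bound $C_0:=\min\{\nu_0/2,\delta_K\}^{kr}>0$ for these Lelong numbers at $y_0$. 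Each wedge product is a closed positive $(k,k)$-current on $X$ (positivity by smooth approximation and pushforward; the $k$-fold BTD product is defined because $s\geq k$), and its cohomology class $(\pi_\ast\alpha_{m_n,\delta_n}^{r})^{k}$ converges to $(\pi_\ast c_1(\cO_\PE(1))^{r})^{k}=s_1(E)^{k}=(-1)^{k}c_1(E)^{k}=0$. Hence the total masses against $\omega^{n-k}$ tend to $0$, whereas the Siu monotonicity formula yields $\int_{B(y_0,r)}\cdot\wedge\omega^{n-k}\gtrsim C_0\,r^{2(n-k)}$ uniformly in $n$, a contradiction. The hard part will be ensuring the uniformity in $(m,\delta)$ of both the peaker constant $\delta_K$ (via continuity of the construction in \autoref{rk:MainThmextension}~(a)) and of the Lelong-number bound $\nu(\tilde\theta_{m,\delta},\xi_0)\geq\nu_0/2$, so that $C_0$ stays bounded away from zero throughout the passage $(m,\delta)\to(\infty,0)$.
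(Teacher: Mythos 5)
Your plan is essentially the paper's argument, reorganized into a contradiction. Both proofs use Demailly's regularization (\autoref{thm:DemRegAnalySing}) to produce $(1,1)$-currents with analytic singularities on $\PE$ whose singular locus projects into a codimension-$s$ analytic subset of $X$, both apply \autoref{MainThm:general-positive}~(ii) with $t=k$ factors and $k_i=r$ to obtain a positive lower bound for the Lelong number of the resulting $(k,k)$-current on $X$, and both close by playing this bound against the vanishing $c_1(E)^k=0$. The paper proves $\nu(\ph,\xi)=0$ for every $\xi$ directly, by extracting a weak limit of the positive currents $S_\eps$, noting the limit is a positive current in the zero class and hence zero, and invoking \autoref{pr:Lelong-estimate-sequence}; you instead fix a single $\xi_0$ with $\nu(T_{\min,\delta_0},\xi_0)>0$ and close with a Siu-monotonicity mass bound. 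These are the same step in two dialects. One place where your version is actually tighter is the explicit use of the minimal-singularity currents $T_{\min,\delta}$: the inclusion $L(\ph_\eps)\subset L_\nnef(\cO_\PE(1))$ that both arguments need before the Baire/analyticity step is then automatic, since $\nu(\ph_\eps,\cdot)\leq\nu(T_{\min,\delta},\cdot)$ and $E_+(T_{\min,\delta})\subset L_\nnef(\cO_\PE(1))$, whereas the paper regularizes an unnamed psh weight $\ph$ and leaves this inclusion tacit. Both worries you flag at the end resolve as you expect: the classes $\alpha_{m,\delta}$ converge to $c_1(\cO_\PE(1))$, which is Kähler on $\pi^{-1}(V)$ for $V$ small, so the peaker constant of \autoref{lm:peaker} stabilizes; and choosing $m(\delta)$ so that $\nu(T_{m(\delta),\delta},\xi_0)\geq\nu_0/2$ is possible because $\nu(T_{m,\delta},\xi_0)\nearrow\nu(T_{\min,\delta},\xi_0)\geq\nu_0$ as $m\to\infty$.
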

If $X$ is strongly pseudoeffective, \ie  $L_\nnef(E)\neq X$ (see \cite{BDPP13}*{Def.~7.1}), then the assumption on  $L_\nnef(E)$ is satisfied for $k=1$. Hence, we obtain X. Wu's original result that all strongly pseudoeffective vector bundles with trivial first Chern class are nef.
To prove \autoref{thm:vanishing-class-and-more-give-nef}, we will follow X. Wu's argumentation. Yet, instead of the analytical construction of closed positive currents tailored to prove his result
(we will see that these coincide with the one defined above, see \autoref{rk:Wu's-currents-are-the-same}), we will use \autoref{MainThm:general-positive} which is based on the calculus of generalized Monge-Ampère products, see Sections \ref{sc:currents-analy-sing} and \ref{sc:Lelong}.

\bigskip

The present work is organized as follows.
In \autoref{sc:preliminaries}, we recall and elaborate some basic properties which are later used.
We continue with the study of currents as they are defined by \eqref{eq:extBTD-def-intro} in \autoref{sc:pushforwards}.
In particular, we prove the mentioned monotone continuity results \autoref{thm:extBTD-continuity} and \autoref{cr:extBTD-continuity-special} there.
In \autoref{sc:currents-analy-sing}, we introduce so-called currents with analytic singularities and study their properties.
We continue with a section about Lelong numbers of closed real currents, see \autoref{sc:Lelong}.
\autoref{sc:proofs} is dedicated to prove the main results, Theorems \ref{MainThm:general-positive} and \ref{MainThm:analytic-singularities}.
We conclude the article with a short discussion on Segre currents of (pseudoeffective) vector bundles and proving \autoref{cr:positive-Segre-current-in-vanishing-class-gives-nef} and \autoref{thm:vanishing-class-and-more-give-nef} in \autoref{sc:Segre}.

\subsection*{Acknowledgements}
The author is very grateful to Elizabeth Wulcan and the unknown referee for their valuable comments and suggestions which helped to improve the article a lot. The author was supported by JSPS KAKENHI Grant Number JP20K14319.

\section{Preliminaries}
\label{sc:preliminaries}
\noindent
In this section, we will introduce some notations and collect results from the literature.
Considering the wedge product of pushforwards, the following tool is very helpful.
\begin{lma}(Lem.~6.3 in \cite{LRSW})\label{lm:fibre-products-and-direct-images}
	Let $Y$ be a complex manifold.
	For $i=1\kdots t$,
	let $X_i$ be complex manifolds, let $\pi_i\colon X_i\rightarrow Y$ be proper holomorphic submersions, and let $\alpha_i$ be forms on $X_i$.
	Let $\tilde X:= X_t \times_Y \cdots \times_Y X_1\overset{\varpi}\longrightarrow Y$ be the fibre product of $\pi_t\kdots \pi_1$, let $\pr_i\colon \tilde X\rightarrow X_i$ denote its projections on the $i$th component,
	and let $T$ be a current on $X_1$.
	Then,
	\begin{equation*}
		\pi_{t,\ast}\alpha_t\wedge \cdots \wedge \pi_{2,\ast} \alpha_2\wedge \pi_{1,\ast} T
		=\varpi_\ast\big(\pr_t^\ast\alpha_t\wedge \cdots \wedge \pr_2^\ast\alpha_2\wedge \pr_1^\ast T\big).
	\end{equation*}
\end{lma}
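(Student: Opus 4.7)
The plan is to proceed by induction on $t$, invoking two classical tools: the projection formula $f_\ast(\xi\wedge f^\ast\beta)=f_\ast\xi\wedge\beta$ for a proper submersion $f$, a current $\xi$, and a form $\beta$; and the base-change identity $g^\ast f_\ast\alpha=f'_\ast (g')^\ast\alpha$ attached to a cartesian square of proper submersions. The base case $t=1$ is tautological after identifying $\tilde X$ with $X_1$ and $\varpi$ with $\pi_1$.

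First I would dispatch the case $t=2$. Here $\tilde X=X_2\times_Y X_1$, and the cartesian square with sides $\pi_1,\pi_2$ and $\pr_1,\pr_2$ is available. Since $\pi_2$ is a proper submersion, so is $\pr_1$, and base change gives $\pr_{1,\ast}\pr_2^\ast\alpha_2=\pi_1^\ast\pi_{2,\ast}\alpha_2$. Because $\varpi=\pi_1\circ\pr_1$, applying the projection formula first for $\pr_1$ (to move $\pr_2^\ast\alpha_2$ out of the pushforward into $X_1$) and then for $\pi_1$ rewrites $\varpi_\ast(\pr_2^\ast\alpha_2\wedge\pr_1^\ast T)$ as $\pi_{1,\ast}(\pi_1^\ast\pi_{2,\ast}\alpha_2\wedge T)=\pi_{2,\ast}\alpha_2\wedge\pi_{1,\ast}T$.

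For $t\geq 3$, set $\tilde X':=X_t\times_Y\cdots\times_Y X_2$ with structure map $\varpi'\colon\tilde X'\to Y$ and projections $\pr'_i\colon\tilde X'\to X_i$ for $i\geq 2$. The induction hypothesis applied on $\tilde X'$ expresses $\pi_{t,\ast}\alpha_t\wedge\cdots\wedge\pi_{2,\ast}\alpha_2$ as $\varpi'_\ast\beta$, where $\beta:=(\pr'_t)^\ast\alpha_t\wedge\cdots\wedge(\pr'_2)^\ast\alpha_2$. Writing $\tilde X=\tilde X'\times_Y X_1$ and observing that the induced projection $q\colon\tilde X\to\tilde X'$ satisfies $\pr_i=\pr'_i\circ q$ for $i\geq 2$ (so that $q^\ast\beta=\pr_t^\ast\alpha_t\wedge\cdots\wedge\pr_2^\ast\alpha_2$), the $t=2$ case applied to the submersions $\varpi'$ and $\pi_1$, with form $\beta$ and current $T$, yields the claimed identity.

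The step requiring the most care is verifying the projection formula and base change at the level of currents, especially because in the $t=2$ reduction one wedges against $\pr_{1,\ast}\pr_2^\ast\alpha_2$, which a priori is a current on $X_1$. For proper submersions this is standard and can be reduced to Fubini in local fibre coordinates; moreover the induction is structured so that the ``form slot'' is always some $\alpha_i$ pulled back along a submersion, so it stays a form and wedging it against the current $T$ (or $\pr_1^\ast T$) is unproblematic throughout.
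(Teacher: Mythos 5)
Your proposal is correct. Note that the paper itself cites this as Lemma~6.3 of the reference \cite{LRSW} without reproducing a proof, so there is no in-paper argument to compare against; but the route you take — induction on $t$, with the two-factor step settled by the projection formula for the proper submersion $\pr_1$ together with base change $\pr_{1,\ast}\pr_2^\ast = \pi_1^\ast\pi_{2,\ast}$ along the cartesian square, and the inductive step reduced to the two-factor case by factoring $\tilde X = \tilde X'\times_Y X_1$ — is the standard and natural one, and all the ingredients you invoke are valid for currents in this setting. You correctly identify the one point that needs care: the object wedged against the current slot must remain a smooth form, which is guaranteed because $\pr_{1,\ast}\pr_2^\ast\alpha_2 = \pi_1^\ast\pi_{2,\ast}\alpha_2$ is a genuine form (fibre integration along the proper submersion $\pi_2$ preserves smoothness), and more generally because in the induction every such slot is of the form $\varpi'_\ast\beta$ with $\beta$ a form. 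The only minor caveat worth spelling out explicitly is that pullback of a general current along a submersion is well-defined, so $\pr_1^\ast T$ makes sense; this is standard but deserves a word in a written-up version.
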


\bigskip

\subsection{Closed positive currents}
In this subsection, we recall some basic facts about closed positive currents. 
Throughout it, let $X$ denote a complex manifold of dimension \dimX.
In \cite{LRRS}*{Def.~4.2}, we introduced the following notion.
\begin{df} %
	A strongly positive $(k,k)$-form $\beta$ is called \emph{bump form at a point} $x\in X$ if for a (or equivalently for any) Kähler form $\omega$ defined near $x$, there exists a constant $\delta > 0$ such that $\beta-\delta\omega^k$ is a strongly positive form in a neighbourhood of $x$.
\end{df}
\begin{rem}[Lem.~7.2 in \cite{LRSW}]\label{rk:BumpFormExists}
	As presented in the proof of \cite{LRRS}*{Lem.~4.3}, for every analytic $A$ with $\dim A\leq k$ and every point $x\in A$, there exists a $(k,k)$-bump form $\beta$ with arbitrarily small support \st %
	the support of $dd^c\beta$ is in the complement of $A$.
\end{rem}
The notion of bump form turns out to be very useful to obtain the uniqueness of the extensions of closed positive currents across analytic sets as follows. %

\begin{lma}[Lem.~4.5 in \cite{LRRS}]\label{lm:Unique} %
	Let $T$ and $S$ be two closed positive $(p,p)$-currents on $X$ such that $T=S$ on $X\minus A$ for an analytic set $A$ with $\codim A\geq p$. %
	If for every point $x \in A$, there is an $(n{-}p,n{-}p)$-bump form $\beta$ at $x$ with arbitrarily small support such that
	$\int T \wedge \beta = \int S \wedge \beta$,
	then $T=S$ on $X$.
\end{lma}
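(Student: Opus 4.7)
The plan is to reduce the assertion to showing that $R:=T-S$ vanishes. Note that $R$ is a closed real $(p,p)$-current of order zero (being the difference of two closed positive currents), and by hypothesis $\supp R\aus A$.

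First I would invoke a support theorem for $R$ to obtain a locally finite real decomposition
\[
R=\sum_j c_j\,[A_j],
\]
where $\{A_j\}$ runs over the $(n{-}p)$-dimensional (locally) irreducible components of $A$ and $c_j\in\R$. This rests on Siu's decomposition applied separately to the closed positive currents $T$ and $S$: the only possible top-dimensional contributions along $A$ are real multiples of $[A_j]$, while the diffuse residual parts of $T$ and $S$ agree off $A$ and each has vanishing generic Lelong number along every $(n{-}p)$-dimensional analytic set, so their difference cannot contribute to $\sum c_j[A_j]$. In the special case $\codim A>p$ there are no top-dimensional components and already $R=0$.

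It remains to check $c_j=0$ for every $j$. Fix $j$ and pick a smooth point $x\in A_j\minus\sing A_j$ lying on no other component of $A$. By the bump-form hypothesis, I can choose an $(n{-}p,n{-}p)$-bump form $\beta$ at $x$ whose support $V$ is so small that $V\cap A\aus A_j\minus\sing A_j$, with $\beta-\delta\omega^{n-p}\geq 0$ on $V$ for some $\delta>0$ and a local Kähler form $\omega$. Then
\[
c_j\int_{A_j\cap V}\beta=\int R\wedge\beta=\int T\wedge\beta-\int S\wedge\beta=0,
\]
while $\int_{A_j\cap V}\beta\geq\delta\int_{A_j\cap V}\omega^{n-p}>0$ because $A_j$ is of pure complex dimension $n-p$. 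Hence $c_j=0$, and since $j$ was arbitrary, $R=0$, i.e.\ $T=S$ on $X$.

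The main obstacle is justifying the support-theorem decomposition for the non-positive real current $R$, specifically the step that the diffuse pieces of the Siu decompositions of $T$ and $S$ coincide globally (rather than merely on $X\minus A$). This can be handled either by the Federer support theorem applied directly to the normal current $R$, or by an inductive uniqueness argument on the codimension of the singular parts that propagates the agreement off $A$ across $A$. Once the decomposition is in hand, the bump-form argument eliminating the coefficients is routine.
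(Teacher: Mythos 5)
Your overall strategy --- pass to the closed real current $R:=T-S$ supported on $A$, show it has the form $\sum_j c_j[A_j]$ with $c_j\in\R$ over the $(n{-}p)$-dimensional irreducible components of $A$, and then use the bump-form hypothesis to force $c_j=0$ --- is the right one. (The paper itself only cites the lemma from [LRRS] and does not reproduce a proof, so there is no internal proof to compare against.) The concluding bump-form step is essentially correct, with one small overstatement: the definition of a bump form only gives $\beta\geq\delta\omega^{n-p}$ on \emph{some} neighbourhood $V'\subset V=\supp\beta$ of $x$, not on all of $V$; but since $\beta\geq 0$ everywhere, $\int_{A_j\cap V}\beta\geq\delta\int_{A_j\cap V'}\omega^{n-p}>0$ still holds and the conclusion $c_j=0$ stands.

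The genuine gap is precisely the one you flag yourself: the decomposition $R=\sum_j c_j[A_j]$. Your Siu-decomposition sketch does not close it --- one is left to show that the residual parts $R_T$ and $R_S$ agree across $A$ and not merely off $A$ --- and neither of the alternatives you gesture at is carried out. The clean route avoids Siu's decomposition entirely. Since $A$ is analytic, hence closed and complete pluripolar, El~Mir's extension theorem implies that the trivial extensions $\one_{X\minus A}T$ and $\one_{X\minus A}S$ are closed and positive, and therefore so are $\one_A T=T-\one_{X\minus A}T$ and $\one_A S$. The hypothesis $T=S$ on $X\minus A$ gives $\one_{X\minus A}T=\one_{X\minus A}S$, so $R=\one_A T-\one_A S$. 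Now $\one_A T$ and $\one_A S$ are closed \emph{positive} $(p,p)$-currents supported on $A$ with $\codim A\geq p$; by the support/structure theorem for closed positive currents supported on analytic sets, each is a locally finite nonnegative combination of the currents of integration $[A_j]$ over the $(n{-}p)$-dimensional components of $A$. Taking the difference gives $R=\sum_j c_j[A_j]$ with $c_j\in\R$ directly, and in particular $R=0$ outright when $\codim A>p$, as you observed. From there your bump-form argument finishes the proof.
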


\begin{prop}[\cf Rem.~4.7 in \cite{LRRS}]\label{pr:convergence-of-positive-currents}
	Let $A$ be an analytic subset of $X$ with $\codim A$ $\geq p$, and
	let $T_\kappa$ be a sequence of closed positive $(p,p)$-currents on $X$. %
	We assume (i) the sequence $T_\kappa$ converges weakly to a closed positive current $T'$ on $X\minus A$ , and
	(ii) $\int T_\kappa\wedge\beta$ converges to numbers $T_\beta$ for any $(n{-}p,n{-}p)$-bump form $\beta$ with arbitrarily small support as $\kappa\rightarrow\infty$.
	Then, $T_\kappa$ converges weakly to a closed positive $(p,p)$-current $T$ on $X$ which is uniquely defined by $T'$ and $T_\beta=\int T\wedge \beta$.
\end{prop}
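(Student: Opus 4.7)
The plan is to combine weak compactness of the sequence $\{T_\kappa\}$ with the uniqueness statement of \autoref{lm:Unique}. Once every subsequence admits a further subsequence converging weakly to the \emph{same} limit $T$, a standard argument forces the full sequence to converge weakly to $T$ on $X$.

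First I would establish a locally uniform mass bound on $\{T_\kappa\}$. On $X\minus A$ this follows directly from assumption (i) and the Banach-Steinhaus principle applied to positive currents. Near a point $x\in A$, I would use bump forms: by \autoref{rk:BumpFormExists}, there exist $(n{-}p,n{-}p)$-bump forms $\beta$ at $x$ with support contained in an arbitrarily small neighbourhood of $x$. By definition, there is a neighbourhood $V$ of $x$ on which $\beta-\delta\omega^{n-p}$ is strongly positive for a fixed Kähler form $\omega$; choosing $\beta$ with $\supp\beta\subset V$, we get
\begin{equation*}
	\delta\int_V T_\kappa\wedge\omega^{n-p}\ \leq\ \int T_\kappa\wedge\beta,
\end{equation*}
and the right-hand side is bounded in $\kappa$ since it converges by assumption (ii). Covering a neighbourhood of $A$ by such $V$'s and combining with the bound on $X\minus A$ yields the desired locally uniform mass bound on $X$.

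By Banach-Alaoglu, every subsequence of $\{T_\kappa\}$ has a weakly convergent further subsequence with some limit $T$. Closedness and positivity pass to weak limits, so $T$ is a closed positive $(p,p)$-current on $X$. Assumption (i) forces $T=T'$ on $X\minus A$, and since every bump form $\beta$ is a smooth, compactly supported test form, the weak convergence together with assumption (ii) gives $\int T\wedge\beta=T_\beta$. If $T_1$ and $T_2$ are two such subsequential weak limits, they agree on $X\minus A$ and satisfy $\int T_1\wedge\beta=\int T_2\wedge\beta=T_\beta$ for every bump form $\beta$ with arbitrarily small support at any $x\in A$. Since $\codim A\geq p$, \autoref{lm:Unique} gives $T_1=T_2$. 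Consequently, the whole sequence $T_\kappa$ converges weakly to this common limit $T$ on $X$, and the same uniqueness argument shows that $T$ is the unique closed positive $(p,p)$-current satisfying $T|_{X\minus A}=T'$ and $\int T\wedge\beta=T_\beta$ for all such $\beta$.

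The main technical obstacle I anticipate is the mass estimate near $A$: the bump form comparison $\beta\geq\delta\omega^{n-p}$ holds only on the neighbourhood $V$, not on all of $\supp\beta$ in general. This is circumvented by exploiting that bump forms may be chosen with arbitrarily small support (\autoref{rk:BumpFormExists}), so $\supp\beta$ can be pushed inside $V$; after that, the proof reduces to standard weak-compactness and uniqueness arguments.
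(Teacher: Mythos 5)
Your proposal follows essentially the same route as the paper's proof: deduce a locally uniform mass bound from assumptions (i) and (ii), extract a weak limit by Banach--Alaoglu, and then invoke \autoref{lm:Unique} to show that all subsequential limits coincide, which forces convergence of the full sequence. The paper simply asserts the uniform mass bound; you spell it out, which is fine, but the displayed estimate near $A$ is not quite right as written. If $\supp\beta\subset V$, then $\beta$ vanishes near $\partial V$, so $\beta-\delta\omega^{n-p}$ cannot be strongly positive on all of $V$ (since $\omega^{n-p}>0$ everywhere); the premises of the inequality $\delta\int_V T_\kappa\wedge\omega^{n-p}\leq\int T_\kappa\wedge\beta$ are therefore contradictory. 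What actually holds is that $\beta-\delta\omega^{n-p}\geq 0$ on some (possibly much smaller) neighbourhood $W\Subset\supp\beta$ of $x$, yielding $\delta\int_W T_\kappa\wedge\omega^{n-p}\leq\int T_\kappa\wedge\beta$. That weaker bound suffices: cover a compact piece of $A$ by finitely many such $W$'s, use (ii) to bound the mass on each, and combine with the bound on $X\minus A$ coming from (i). Your concluding remark --- that one circumvents the issue by ``pushing $\supp\beta$ inside $V$'' --- does not actually repair the estimate, since $V$ is determined by the choice of $\beta$ rather than the other way around; the repair is to integrate over $W$, not $V$. With that correction, the argument is complete and matches the paper's.
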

\begin{proof}
	We follow the argumentation of the proof of Prop.~4.6 in \cite{LRRS}.

	Due to the assumptions (i) and (ii), we obtain that the trace measures of $T_\kappa$ are (locally) uniformly bounded in $\kappa$. Therefore, the Banach-Alaoglu theorem implies that there is a subsequence $\kappa_\lambda\rightarrow\infty$ such that $T_{\kappa_\lambda}$ converges weakly to a closed positive current $T$.
	Let us assume that there is another subsequence $T_{\kappa'_\lambda}$ which does not converge to $T$. By passing to a subsequence, we may assume that $T_{\kappa'_\lambda}$ converges to a closed positive current $S$ different from $T$.
	By the assumption (i), we get that $S$ equals $T$ on $X\minus A$. Furthermore, the assumption (ii) implies that $\int T\wedge \beta = T_\beta= \int S\wedge\beta$ for any $(n{-}p,n{-}p)$-bump form with arbitrarily small support. By \autoref{lm:Unique}, we get $T=S$ on $X$. This is a contradiction.
\end{proof}

\medskip

Next we recall the fundamental result of Bedford-Taylor defining MA products for bounded psh functions and its generalization by Demailly.

\begin{thm}[\cf \cites{BT76,BT82}]\label{thm:BT-continuity}
	Let $u_1\kdots u_t$ be bounded psh functions, and let $T$ be a closed positive $(p,p)$-current on $T$.
	Then, there exists a well-defined closed positive $(t+p,t+p)$-current
		$$dd^cu_t\wedge\dots\wedge dd^cu_1\wedge T,$$
	locally given as the limit of
	$ dd^cu_{t,\kappa}\wedge\dots\wedge dd^cu_{1,\kappa}\wedge T$
	for sequences of smooth psh functions $u_{i,\kappa}$ decreasing pointwise to $u_i$. %
	Furthermore, we obtain that this is monotone continuous: Let $v_{i,\kappa}$ be sequences of (not necessarily smooth) psh functions decreasing pointwise to $u_i$. %
	Then,
	\begin{align*}
		u_tdd^cu_{t-1}\wedge\dots\wedge dd^cu_1\wedge T
		&= \lim_{\kappa\rightarrow\infty}v_tdd^c v_{t-1,\kappa}\wedge\dots\wedge dd^cv_{1,\kappa}\wedge T,
		\makebox[0em][l]{\quad and}\\
		dd^cu_t\wedge\dots\wedge dd^cu_1\wedge T
		&= \lim_{\kappa\rightarrow\infty}dd^c v_{t,\kappa}\wedge\dots\wedge dd^cv_{1,\kappa}\wedge T.
	\end{align*}
\end{thm}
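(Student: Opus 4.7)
The plan is to proceed by induction on $t$, with the base case $t=0$ being vacuous. Assume by induction that for any bounded psh functions $w_1,\ldots,w_{t-1}$, the current $S:=dd^cw_{t-1}\wedge\cdots\wedge dd^cw_1\wedge T$ is a well-defined closed positive $(t{-}1{+}p,t{-}1{+}p)$-current, and that the monotone continuity assertions hold at level $t-1$. Define recursively
\begin{equation*}
	dd^cu_t\wedge\cdots\wedge dd^cu_1\wedge T:=dd^c\bigl(u_t\,dd^cu_{t-1}\wedge\cdots\wedge dd^cu_1\wedge T\bigr),
\end{equation*}
which makes sense because $u_t\in L^\infty_\loc$ and $dd^cu_{t-1}\wedge\cdots\wedge dd^cu_1\wedge T$ is a current of order zero (being positive by induction), so the product of $u_t$ with its measure coefficients is well-defined, and $dd^c$ can be applied in the distribution sense. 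That the resulting current is closed is obvious; positivity will follow once we know it is a weak limit of smooth nonnegative forms, which comes from the monotone continuity.

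The heart of the argument is the following measure-theoretic fact, which I would isolate as a lemma: if $S_\kappa$ is a sequence of closed positive currents converging weakly to a closed positive current $S$, and if $v_\kappa$ is a sequence of psh functions, uniformly bounded on compact sets, decreasing pointwise to an $L^\infty_\loc$ function $v$, then $v_\kappa S_\kappa\to v\,S$ weakly. The proof reduces, by testing against smooth compactly supported forms, to a statement about the coefficient measures: one combines weak convergence of $S_\kappa$ with the Hartogs-type upper semicontinuity of upper envelopes of psh sequences (Dini-type argument), together with the uniform $L^\infty$ bound to pass to the limit by dominated convergence. Applying $dd^c$ then preserves this weak convergence.

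Given this lemma, the induction step for both continuity statements is essentially mechanical. For sequences $v_{i,\kappa}\searrow u_i$ (not necessarily smooth), the inductive hypothesis gives
\begin{equation*}
	dd^cv_{t-1,\kappa}\wedge\cdots\wedge dd^cv_{1,\kappa}\wedge T \xrightarrow{\kappa\to\infty} dd^cu_{t-1}\wedge\cdots\wedge dd^cu_1\wedge T
\end{equation*}
weakly. Multiplying by $v_{t,\kappa}$ and invoking the lemma (with $S_\kappa$ the left side and $v_\kappa=v_{t,\kappa}$) yields the first monotone continuity formula; applying $dd^c$ gives the second. Well-definedness of the limit independently of the approximating sequence follows by a diagonal argument comparing two given sequences $u_{i,\kappa}$ and $v_{i,\kappa}$. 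Positivity of the constructed current then drops out by taking $v_{i,\kappa}$ smooth so that the approximants $dd^cv_{t,\kappa}\wedge\cdots\wedge dd^cv_{1,\kappa}\wedge T$ are all closed positive.

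The main obstacle is the lemma on weak convergence of $v_\kappa S_\kappa$, because the $v_{i,\kappa}$ in the hypothesis are not assumed smooth, so one cannot integrate by parts in the standard way; one must genuinely exploit quasi-continuity of psh functions with respect to the trace measures of the currents involved, which in turn rests on the capacity estimates that underlie Bedford–Taylor theory. Once this monotone convergence is in hand, the rest of the theorem is a direct induction.
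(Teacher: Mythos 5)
The paper cites this theorem without proof --- it is the classical Bedford--Taylor continuity result (in Demailly's formulation with an extra closed positive factor $T$), so there is no in-paper argument to compare against. Judged on its own, the skeleton you give (recursive definition $dd^cu_t\wedge S := dd^c(u_t\,S)$, induction on $t$, and a convergence lemma for $v_\kappa S_\kappa$) is indeed the structure of the standard proof.

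However, the lemma you isolate is false in the generality you state it. You claim: if $S_\kappa\to S$ weakly among closed positive currents and $v_\kappa$ are locally uniformly bounded psh functions decreasing pointwise to $v\in L^\infty_{\mathrm{loc}}$, then $v_\kappa S_\kappa\to vS$ weakly. Take $v_\kappa\equiv v$ fixed (a legitimate decreasing sequence) and specialize to top-degree $(n,n)$-currents, which are simply positive measures (and automatically closed). The claim would then force $\int v\phi\,dS_\kappa\to\int v\phi\,dS$ for every test function $\phi$ and every weakly convergent sequence of positive measures $S_\kappa\to S$. Applied with $S=\delta_p$ and $S_\kappa=\delta_{z_\kappa}$ for $z_\kappa\to p$, this would force $v(z_\kappa)\to v(p)$ along every such sequence, i.e.\ continuity of $v$ at $p$. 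But a bounded psh function is only upper semicontinuous and in general has genuine discontinuity points (at non-pluripolar thin sets); at such a point $p$ the conclusion fails. The ``Dini plus dominated convergence'' mechanism you invoke in fact only yields the one-sided estimate $\limsup_\kappa\int v_\kappa\phi\,dS_\kappa\le\int v\phi\,dS$ (for $\phi\ge 0$), which comes for free from upper semicontinuity; the matching lower bound is the real content and is exactly what the naive argument cannot reach.

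That lower bound genuinely requires that $S_\kappa$ carry the Monge--Amp\`ere structure $dd^cv_{t-1,\kappa}\wedge\cdots\wedge dd^cv_{1,\kappa}\wedge T$ for shared decreasing bounded psh sequences: one then uses the Chern--Levine--Nirenberg inequalities to obtain locally uniform mass bounds, quasi-continuity of bounded psh functions with respect to the Monge--Amp\`ere capacity, and a comparison showing that the open set where $v_\kappa-v$ fails to be small has small $S_\kappa$-mass \emph{uniformly} in $\kappa$. Your closing paragraph names precisely these ingredients, so what you have written is an accurate roadmap of the Bedford--Taylor argument rather than a proof of it: the capacity estimates you defer are where all of the work lies, and in their absence the key lemma as you stated it is not merely unproved but false.
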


\begin{df}\label{df:ULocus} 
	For a psh function $u$ on $X$, the \textit{unbounded locus} $L(u)$ is defined as the closed set of all points $x\in X$ such that $u$ is unbounded near $x$. In general, $L(u)$ is strictly larger than the pole set of $u$.
\end{df}

\begin{thm}[\cf \cites{Demailly85, Demailly93}; shortly \emph{\BTD}]\label{thm:BTD} %
	Let $u$ be a psh function, and let $T$ be a closed positive $(p,p)$-current. If $L(u) \cap \supp T$ is contained in an analytic $A$ with $\codim A\geq k+p$, then there exists a well-defined closed positive $(k{+}p,k{+}p)$-current $(dd^cu)^k\wedge T$
	(locally) given as the limit of
		$$ (dd^c u_\kappa)^k \wedge T$$
	for any sequence of smooth psh functions $u_\kappa$ decreasing pointwise to $u$.
	Furthermore, we obtain that this is monotone continuous:
	Let $u_1\kdots u_t$ be psh functions with $L(u_i) \cap \supp T$ is contained in an analytic $A$ with $\codim A\geq t+p$, and $v_{i,\kappa}$ be sequences of (not necessarily smooth) psh functions decreasing pointwise to $u_i$. %
	Then,
		$$ dd^cu_t\wedge\dots\wedge dd^cu_1\wedge T = \lim_{\kappa\rightarrow\infty}dd^c v_{t,\kappa}\wedge\dots\wedge dd^cv_{1,\kappa}\wedge T.$$
\end{thm}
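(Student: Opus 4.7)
My plan is to reduce to \autoref{thm:BT-continuity} on $X\setminus A$, where $u$ is essentially bounded on $\supp T$, and then to extend across $A$ using the codimension hypothesis via the bump-form criterion of \autoref{pr:convergence-of-positive-currents}. On $X\setminus A$, the restriction of $u$ to $\supp T$ is locally bounded: if $u$ were unbounded at some $x\notin A$, then $x\notin \supp T$, so $T$ vanishes near $x$ and the product is trivially zero there. Hence, on any compact $K\subset X\setminus A$, $\max(u,-M)$ agrees with $u$ on $\supp T\cap K$ for $M$ large enough, and \autoref{thm:BT-continuity} furnishes a well-defined closed positive $(k{+}p,k{+}p)$-current $S'$ on $X\setminus A$, independent of any smooth decreasing approximation $u_\kappa\searrow u$.

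For the global construction, fix smooth psh $u_\kappa\searrow u$. By the Chern--Levine--Nirenberg inequality, the currents $T_\kappa:=(dd^c u_\kappa)^k\wedge T$ have locally uniformly bounded mass:
\begin{equation*}
    \int_K T_\kappa\wedge\omega^{n-k-p} \;\leq\; C_{K,K'}\,\|u_\kappa\|_{L^1(K',\omega^n)}^k\,\|T\|_{K'}
    \;\leq\; C_{K,K'}\,\|u_1\|_{L^1(K')}^k\,\|T\|_{K'},
\end{equation*}
using monotonicity of $u_\kappa$. Banach--Alaoglu then produces weak subsequential limits $S$ of $T_\kappa$, each restricting to $S'$ on $X\setminus A$. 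To show all subsequential limits coincide, I invoke \autoref{pr:convergence-of-positive-currents}: at every $x\in A$, \autoref{rk:BumpFormExists} provides an $(n{-}k{-}p,n{-}k{-}p)$-bump form $\beta$ with arbitrarily small support and $\supp dd^c\beta\subset X\setminus A$---possible precisely because $\codim A\geq k+p$ forces $\dim A\leq n-k-p$. Stokes' theorem then gives
\begin{equation*}
    \int_X T_\kappa\wedge\beta \;=\; \int_X u_\kappa\,(dd^c u_\kappa)^{k-1}\wedge T\wedge dd^c\beta,
\end{equation*}
and since $dd^c\beta$ is supported in $X\setminus A$, the integrand converges---by induction on $k$, the product $u\,(dd^c u)^{k-1}\wedge T$ being already constructed---to a number independent of the approximating sequence. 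Hence \autoref{pr:convergence-of-positive-currents} yields weak convergence of $T_\kappa$ on all of $X$ to a uniquely determined closed positive current.

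For the multi-potential monotone continuity with $\codim A\geq t+p$, I iterate this argument, treating at each step the current $dd^c u_j\wedge\cdots\wedge dd^c u_1\wedge T$ as the new ``$T$'' for the next potential $u_{j+1}$; the hypothesis $\codim A\geq t+p$ is precisely what leaves enough codimension room at each intermediate stage. Extending from smooth approximations $u_{i,\kappa}\searrow u_i$ to arbitrary (possibly non-smooth) decreasing approximations $v_{i,\kappa}\searrow u_i$ is handled by the same bump-form pairing combined with the bounded-case monotone continuity from \autoref{thm:BT-continuity}.

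The principal obstacle is the iterated integration by parts: at each induction step one must verify that $u_{j+1}$ (or $v_{j+1,\kappa}$) is integrable against the already-constructed current $(dd^c u_j)\wedge\cdots\wedge(dd^c u_1)\wedge T\wedge dd^c\beta$. Integrability will follow from the codimension condition together with the local boundedness of the iterated potentials on $\supp T\setminus A$, but orchestrating the induction---so that the bump form's $dd^c$ support simultaneously avoids every relevant locus and a single local mass bound propagates through all stages---is the technically delicate heart of the proof.
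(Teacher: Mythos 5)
The paper cites \autoref{thm:BTD} from \cites{Demailly85, Demailly93} without proof, so there is no in-paper argument to compare against. That said, your strategy—construct the current on $X\setminus A$ via \autoref{thm:BT-continuity}, obtain uniform local mass bounds, extract a weak limit by Banach--Alaoglu, and nail down uniqueness by pairing with bump forms whose $dd^c$ avoids $A$—is exactly the template the paper itself uses for its generalizations \autoref{pr:extBTD-def} and \autoref{thm:extBTD-continuity}, and also mirrors Demailly's original approach. So you have the right shape of proof.

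A few specific issues. First, a small but real error in the mass estimate: since $u_\kappa\searrow u$, after normalizing $u_1\le 0$ the quantities $\|u_\kappa\|_{L^1(K')}$ are \emph{increasing}, not decreasing, so the bound should read $\|u_\kappa\|_{L^1(K')}\le\|u\|_{L^1(K')}<\infty$ (by monotone convergence and local integrability of $u$), not $\|u_\kappa\|_{L^1}\le\|u_1\|_{L^1}$. With that correction the uniform mass bound is fine, using the $L^1$ version of Chern--Levine--Nirenberg obtained from the $L^\infty$ version plus the sub-mean value inequality for psh functions.

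Second, the integration-by-parts step needs more care than stated. You write that $\int u_\kappa\,(dd^cu_\kappa)^{k-1}\wedge T\wedge dd^c\beta$ converges because ``the product $u\,(dd^cu)^{k-1}\wedge T$ is already constructed by induction,'' but what you actually need is the joint convergence of the potential and the power, namely that $u_\kappa\,(dd^cu_\kappa)^{k-1}\wedge T\to u\,(dd^cu)^{k-1}\wedge T$ weakly on $\supp dd^c\beta$. This is not implied by the inductive existence of the current alone—you must build the ``potentialled'' convergence (the first display in \autoref{thm:BT-continuity}) into the inductive statement, and you must invoke that $\supp dd^c\beta\subset X\setminus A$ (where $u$ is bounded on $\supp T$ after truncation) so that the Bedford--Taylor monotone continuity applies there. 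You gesture at this but should make it explicit to avoid circularity.

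Third, the passage from smooth $u_{i,\kappa}$ to arbitrary decreasing $v_{i,\kappa}$ is waved at in one sentence, but this is where most of the technical weight of Demailly's theorem lies; the bump-form pairing argument needs to be redone with $v_{i,\kappa}$ and compared against a fixed smooth regularization (a Dini-type or diagonal argument), and you have not supplied that bookkeeping. You candidly flag this yourself in the final paragraph, and that honesty is apt: the orchestration of the iterated mass bounds and the non-smooth approximations is precisely the hard core of Demailly's argument, and your sketch does not yet discharge it.
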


Considering the mixed Monge-Ampère product (with different potentials in each factor),
the codimension condition can be weaken by considering the (Hausdorff) dimension of the intersections of the unbounded loci, see Thm~4.5 and Prop.~4.9 in \cite{DemaillyAG}*{Chp.~III}.

\bigskip

\subsection{Closed quasipositive currents}
Let $X$ be a complex manifold of dimension \dimX, and let $\gamma$ be a (positive) $(1,1)$-form on $X$.
We call a function $q$  on $X$ $\gamma$-psh if $dd^c q +\gamma$ is positive. This is equivalent to $q$ is quasipsh with $dd^c q+\gamma\geq 0$ in the sense of currents.
Let $\theta$ be a closed \emph{quasipositive} $(1,1)$-current on $X$, \ie locally, $\theta =dd^c q$ for a quasipsh function $q$ (shortly \emph{quasipos}; also called almost positive).
$\theta$ is called $\gamma$-\emph{positive} ($\gamma$-\emph{pos}) if $\theta+\gamma\geq 0$ in the sense of currents, \ie locally, $\theta =dd^c q$ for a $\gamma$-psh function $q$.
Let us pick a closed real $(1,1)$-form $\alpha$ which is in the same \BC-cohomology class as the $\gamma$-pos $\theta$,
\ie $\theta-\alpha=dd^c q$ for a $(\gamma{+}\alpha)$-psh function $q$ on $X$.
We define $L(\theta):=L(q)$ which is independent of the choice of $q$ and $\alpha$.
Let $q_\kappa$ be a sequence of (smooth) $(\gamma{+}\alpha)$-psh functions which is decreasing pointwise to $q$.
Set $\theta_\kappa:=dd^c q_\kappa+\alpha$ which are (smooth) $\gamma$-positive currents.
Obviously, the sequence $\theta_\kappa$ converges weakly to $\theta$. Let us use the following notation improperly by saying that the sequence $\theta_\kappa$ is \emph{decreasing pointwise to} $\theta$. This is motivated by the following.
If $u$ is a psh functions on a small enough open set with $dd^c u =\theta +\gamma\geq 0$ (assuming that $\gamma$ is closed), then $u_\kappa:=q_\kappa-q+u$ are (smooth) psh functions with $dd^c u_\kappa=\theta_\kappa+\gamma\geq 0$ such that the sequence $u_\kappa$ is decreasing pointwise to $u$. 

\smallskip

Locally, we may assume that $\gamma$ is closed and positive: On any small enough open set, there is a Kähler form $\omega$ and a constant $C$ such that $\gamma\leq C\omega$. In particular, $\theta+C\omega\geq\theta+\gamma\geq 0$ (and $\theta_\kappa+C\omega\geq 0$). Therefore, we may replace $\gamma$ by $C\omega$ on any small enough open set.

\smallskip

Let $T$ be a closed positive $(p,p)$-current, and assume that $\theta_\kappa$ are smooth.
By \BTD, for all $k$ such that $L(\theta)$ is contained in an analytic set of codimension $\geq k+p$, %
 (assuming $\gamma$ is closed, see above) %
\begin{equation*}
	(\theta+\gamma)^k \wedge T\overset{\Def}{=} \lim_{\kappa\rightarrow\infty} (\theta_\kappa+\gamma)^k\wedge T\overset{\loc}=\lim_{\kappa\rightarrow\infty} (dd^c u_\kappa)^k\wedge T
\end{equation*}
is a closed positive current which is independent of the choice of $q_\kappa$ / $\theta_\kappa$ / $u_\kappa$.
We can extend this definition to (wedge) powers of the quasipositive $\theta$ by
\pagebreak[0]
\begin{equation}\label{eq:BTD-for-quasi}
	\begin{split}
	\theta^k\wedge T &= \big(\theta+\gamma -\gamma\big)^k \wedge T:=\sum\big._{i=0}^k (-1)^i \smbinom{k}{i} \gamma^{i}\wedge (\theta+\gamma)^{k-i}\wedge T\\
	& \overset{\Def}{=} \lim_{\kappa\rightarrow\infty} \sum\big._{i=0}^k (-1)^i \smbinom{k}{i} \gamma^{i}\wedge (\theta_\kappa+\gamma)^{k-i}\wedge T = \lim_{\kappa\rightarrow\infty} \theta_\kappa^k\wedge T .
\end{split}\end{equation}
Due to the last equation, we see that the definition is independent of the choice of $\gamma$.
$\theta^k\wedge T$ is a closed real $(k{+}p,k{+}p)$-current which is locally the difference of two closed positive currents.

\medskip

At the end of this section,
we recall following versions of Demailly's regularization of quasipos $(1,1)$-currents fitting to our applications. %

\begin{thm}[\cite{Demailly82}, Main Thm~1.1 in \cite{Demailly92-reg-closed-pos-currents}]\label{thm:Dem-smooth-approximation}
	Let $X$ be a compact complex manifold, and fix a Hermitian form $\omega$ and a real $(1,1)$-form $\gamma$ on $X$.
	Let $\theta$ be a $\gamma$-positive $(1,1)$-current on $X$. Then,
	there are a constant $C$ only dependent of $(X,\omega)$, continuous functions $\lambda_\kappa$ and $(\gamma{+}C\lambda_\kappa\omega)$-pos $(1,1)$-forms $\theta_\kappa\in\{\theta\}_{\BC}$ such that as $\kappa\rightarrow\infty$, %
	(i)  the sequence $\theta_\kappa$ is decreasing pointwise to $\theta$, and %
	(ii) $\lambda_\kappa(x)\searrow\nu(\theta,x)$ for all $x\in X$.
\end{thm}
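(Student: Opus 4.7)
The plan is to follow Demailly's classical Bergman-kernel-type regularization, for which the main analytic input is the Ohsawa-Takegoshi $L^2$-extension theorem. First I would reduce to regularizing a quasi-psh potential: picking a smooth representative $\alpha\in\{\theta\}_{\BC}$, we may write $\theta=\alpha+\ddc\varphi$ for a global quasi-psh function $\varphi$ with $\ddc\varphi\geq-\alpha-\gamma$. It then suffices to produce smooth $\varphi_\kappa\searrow\varphi$ together with continuous $\lambda_\kappa\searrow\nu(\varphi,\cdot)=\nu(\theta,\cdot)$ satisfying $\ddc\varphi_\kappa\geq -\alpha-\gamma-C\lambda_\kappa\omega$, and then to set $\theta_\kappa:=\alpha+\ddc\varphi_\kappa$.

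For the construction of $\varphi_\kappa$, cover $X$ by finitely many coordinate balls $B_j$; on each $B_j$, consider the Hilbert space $\mathcal{H}_\kappa(B_j)$ of holomorphic functions $f$ with $\int|f|^2 e^{-2\kappa\varphi}\,dV<\infty$, let $(\sigma_{\kappa,j,\ell})_\ell$ be an orthonormal basis, and set $\psi_{\kappa,j}:=\tfrac{1}{2\kappa}\log\sum_\ell|\sigma_{\kappa,j,\ell}|^2$. The Bergman extremal property gives the upper bound $\psi_{\kappa,j}\leq\varphi+O(1)/\kappa$, while Ohsawa-Takegoshi (extending the constant $1$ weighted by $e^{-\kappa\varphi}$ from a chosen point) gives the matching lower bound $\psi_{\kappa,j}(x)\geq\varphi(x)-O(\log\kappa)/\kappa$ via a sub-mean-value argument, so $\psi_{\kappa,j}\to\varphi$ pointwise. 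Because $\psi_{\kappa,j}$ is $\tfrac{1}{2\kappa}$ times the logarithm of a sum of squares of holomorphic functions, a direct computation in the $\sigma_{\kappa,j,\ell}$-basis shows its complex Hessian is bounded below by $-\tfrac{C}{\kappa}\omega$, and Parseval applied to monomial bases yields the Lelong number identity $\nu(\psi_{\kappa,j},x)=\max\{\nu(\varphi,x)-n/\kappa,0\}$. I then glue the $\psi_{\kappa,j}$ into a global smooth $\varphi_\kappa$ via a partition of unity subordinate to $(B_j)$ combined with Demailly's regularized maximum, which preserves both the Hessian lower bound (picking up only an $O(1/\kappa)\omega$ error from the cutoff derivatives) and the Lelong number formula; monotonicity $\varphi_\kappa\searrow\varphi$ is arranged by passing to a subsequence growing sufficiently fast and iterating the regularized max with an overall additive constant shift $\varepsilon_\kappa\searrow 0$.

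Setting $\lambda_\kappa(x)$ to be a continuous function bounded below by $\nu(\varphi,x)+n/\kappa$ and decreasing to $\nu(\varphi,x)$ (for example, by convolving the upper semi-continuous $\nu(\varphi,\cdot)$ with a shrinking kernel and adding $n/\kappa$) then yields both the stated positivity $\theta_\kappa+\gamma+C\lambda_\kappa\omega\geq 0$ and $\lambda_\kappa\searrow\nu(\theta,\cdot)$. The hard part is the second paragraph: obtaining simultaneously the sharp Hessian estimate and the sharp Lelong-number identity for $\psi_{\kappa,j}$, i.e.\ controlling the loss of positivity by $C/\kappa$ (plus the unavoidable contribution coming from the Lelong numbers of $\varphi$) while achieving $\nu(\psi_{\kappa,j},\cdot)=\max\{\nu(\varphi,\cdot)-n/\kappa,0\}$. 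This is precisely where the Ohsawa-Takegoshi extension theorem with near-optimal constant and Skoda's integrability criterion are essential; everything else---the Bott-Chern reduction, the partition-of-unity gluing, and the final monotonization---is a formal application of standard tools.
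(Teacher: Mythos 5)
Note first that the paper itself does not prove \autoref{thm:Dem-smooth-approximation}; it is cited verbatim from Demailly, so the comparison is against the original result rather than against a proof in the present paper.

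Your proposal has a genuine gap: the Bergman-kernel approximants $\psi_{\kappa,j}=\tfrac{1}{2\kappa}\log\sum_\ell|\sigma_{\kappa,j,\ell}|^2$ are \emph{not} smooth. Wherever $\nu(\varphi,x_0)\geq n/\kappa$, Skoda's integrability criterion forces every element of the weighted Bergman space $\mathcal{H}_\kappa(B_j)$ to vanish at $x_0$, so the Bergman kernel vanishes there and $\psi_{\kappa,j}(x_0)=-\infty$. This is already built into the Lelong-number formula you assert, $\nu(\psi_{\kappa,j},\cdot)=\max\{\nu(\varphi,\cdot)-n/\kappa,0\}$: for fixed $\kappa$ the right-hand side is strictly positive on the analytic set where $\nu(\varphi,\cdot)>n/\kappa$, which means $\psi_{\kappa,j}$ has log poles there. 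Gluing by a partition of unity and Demailly's regularized maximum cannot remove these poles, so the $\varphi_\kappa$ you produce is quasi-psh with analytic singularities, not smooth, and $\theta_\kappa=\alpha+\ddc\varphi_\kappa$ is then a closed $(1,1)$-current with analytic singularities, not a $(1,1)$-form as the statement requires.

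What you have correctly outlined is essentially the proof of the companion result \autoref{thm:DemRegAnalySing}, which asserts approximation by currents with analytic singularities and a \emph{uniform}, position-independent loss of positivity $\eps_\kappa\omega$. \autoref{thm:Dem-smooth-approximation} is genuinely different: its loss $C\lambda_\kappa(x)\omega$ is position-dependent, and this dependence is precisely the price of smoothing across the polar set of $\varphi$. Demailly obtains the smooth statement not via Bergman kernels but by a convolution along the exponential map of a Hermitian metric (the technique of his 1982 paper, carried out carefully in the 1992 paper), which produces smooth potentials directly at the cost of a Hessian bound that degrades proportionally to the local Lelong density -- this is exactly why the continuous function $\lambda_\kappa(x)\searrow\nu(\theta,x)$ enters the conclusion. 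If one wanted to deduce the smooth statement from the analytic-singularity one, an additional Richberg-type regularization near the residual poles would be needed; that step is absent from your sketch and is not a formality.
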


\begin{thm}[Main Thm~1.1 in \cite{Demailly92-reg-closed-pos-currents}]
	\label{thm:DemRegAnalySing}
	Let $X$ be a complex manifold, and fix a Hermitian form $\omega$ and a real $(1,1)$-form $\gamma$ on $X$.
	Let $\theta$ be a $\gamma$-positive $(1,1)$-current on $X$. Then, for every relatively compact $U\Subset X$,
	there are positive constants $\eps_\kappa$ and $(\gamma{+}\eps_\kappa\omega)$-pos $(1,1)$-currents $\theta_\kappa\in\{\theta\}_{\BC}$ with analytic singularities on $U$ such that as $\kappa\rightarrow\infty$, %
	(i) the sequence $\theta_\kappa$ is decreasing pointwise to $\theta$, %
	(ii) $\nu(\theta_\kappa,x)\nearrow\nu(\theta,x)$ for all $x\in U$ (uniformly), and %
	(iii) $\eps_\kappa\rightarrow 0$. %
\end{thm}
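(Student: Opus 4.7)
\medskip

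\noindent\textbf{Proof plan.} The plan is to implement Demailly's Ohsawa--Takegoshi / Bergman kernel regularization scheme. Since the conclusion concerns a relatively compact $U\Subset X$, pick an open $U'$ with $U\Subset U'\Subset X$, and a smooth closed representative $\alpha\in\{\theta\}_\BC$ on a neighbourhood of $\bar U'$; write $\theta=\alpha+\ddc\psi$ with $\psi$ a $(\gamma+\alpha)$-psh function there. The problem then reduces to constructing on $U$ a sequence $\psi_\kappa$ of quasipsh functions with analytic singularities such that $\ddc\psi_\kappa+\alpha+\gamma\geq -\eps_\kappa\omega$ with $\eps_\kappa\to 0$, $\psi_\kappa\searrow\psi$, and $\nu(\psi_\kappa,x)\nearrow\nu(\psi,x)$ uniformly in $x$.

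\smallskip

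First, I would carry out the regularization locally. Cover a neighbourhood of $\bar U$ by finitely many Stein coordinate charts $(\Omega_j,z_j)$ on which $\gamma+\alpha\leq C_j\ddc|z_j|^2$, so that $\psi+C_j|z_j|^2$ is plain psh on $\Omega_j$. For each integer $\kappa\geq 1$ consider the Hilbert space $\HH_{j,\kappa}$ of holomorphic $f$ on $\Omega_j$ with
\begin{equation*}
	\|f\|_{j,\kappa}^2:=\int_{\Omega_j}|f|^2\, e^{-2\kappa\psi}\, dV_{z_j}<\infty,
\end{equation*}
pick an orthonormal basis $\{\sigma_{j,\kappa,\ell}\}_\ell$, and define
\begin{equation*}
	\psi_{j,\kappa}(x):=\tfrac{1}{2\kappa}\log\sum_\ell|\sigma_{j,\kappa,\ell}(x)|^2.
\end{equation*}
This function has analytic singularities along the common zero set of the basis sections. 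Applying the Ohsawa--Takegoshi $L^2$-extension theorem to point evaluations yields $\psi_{j,\kappa}\geq \psi-C/\kappa$, while the submean inequality for plurisubharmonic functions gives $\psi_{j,\kappa}\leq\psi+(n\log\kappa+C)/\kappa$; a standard Schwarz-lemma / Skoda argument then furnishes the Lelong number estimate $\nu(\psi,x)-n/\kappa\leq\nu(\psi_{j,\kappa},x)\leq\nu(\psi,x)$ uniformly on compact subsets of $\Omega_j$.

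\smallskip

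The next step, which I expect to be the main obstacle, is to assemble the local $\psi_{j,\kappa}$ into a \emph{global} approximant $\tilde\psi_\kappa$ on $U$ while keeping the positivity defect of the form $\eps_\kappa\omega$ with $\eps_\kappa\to 0$. I would use Demailly's regularized maximum $\max_\eps$ (a smooth, plurisubharmonicity-preserving approximation of $\max$) together with small smooth cut-off corrections $\chi_j$ making the $\psi_{j,\kappa}+\chi_j$ agree up to $O(1/\kappa)$ on overlaps: their regularized max then has analytic singularities given by the union of the local singular loci, inherits the $(\gamma+\alpha)$-positivity up to a smooth error $\eps_\kappa\omega$ coming from the finitely many $\ddc\chi_j$ and from the $O(\log\kappa/\kappa)$ submean discrepancies, and can be arranged to be monotone decreasing after passage to a subsequence. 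Property (ii) then follows uniformly from the local Lelong number estimates together with the compatibility of Lelong numbers with regularized maxima. Setting $\theta_\kappa:=\alpha+\ddc\tilde\psi_\kappa$ completes the construction. The delicate aspects of the scheme are the uniform control of the gluing error so that $\eps_\kappa\to 0$ and the sharp Ohsawa--Takegoshi constants needed for the \emph{uniform} convergence $\nu(\theta_\kappa,x)\nearrow\nu(\theta,x)$ over all of $U$; the local Bergman step and the verification of (i) and (iii) are comparatively routine once this gluing is in place.
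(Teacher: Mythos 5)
The paper does not prove \autoref{thm:DemRegAnalySing} at all: the statement is a direct citation of Demailly's Main Theorem~1.1 in \cite{Demailly92-reg-closed-pos-currents}, combined with Demailly's Remark~5.15 (to replace the equality $\nu(\theta_\kappa,x)=\nu(\theta,x)$ by currents with genuine analytic singularities and uniform approximation of Lelong numbers) and with Demailly's own remark that the construction works on relatively compact subsets of a non-compact manifold. So there is no in-paper proof to compare against; what you have written is a plan for reproving Demailly's theorem from scratch.

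Your plan does follow the method Demailly actually uses: local Bergman-kernel potentials with weight $e^{-2\kappa\psi}$, Ohsawa--Takegoshi for the lower bound $\psi_{j,\kappa}\geq\psi-C/\kappa$, submean value for the upper bound $\psi_{j,\kappa}\leq\psi+O(\log\kappa/\kappa)$, and a Schwarz/Skoda argument for the Lelong number pinch $\nu(\psi,x)-n/\kappa\leq\nu(\psi_{j,\kappa},x)\leq\nu(\psi,x)$; this pinch is what yields the uniform convergence in (ii). Three points, though, are underweighted. First, the function $\psi_{j,\kappa}=\tfrac{1}{2\kappa}\log\sum_\ell|\sigma_{j,\kappa,\ell}|^2$ involves a priori infinitely many sections; it is smooth off an analytic set but is not automatically of the form $c\log\|F\|^2+b$ with a \emph{finite} holomorphic tuple $F$ and bounded $b$, which is what analytic singularities (\autoref{df:QPshAS}) require. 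One must truncate to finitely many sections (Noetherian/strong-approximation argument), and this truncation perturbs the Lelong numbers; that is exactly why the cited Remark~5.15 relaxes the exact equality of Lelong numbers to the approximation in (ii). Second, you correctly flag the gluing as the main obstacle, but the subtle point is that the cut-offs $\chi_j$ must be $\kappa$-independent while the $O(\log\kappa/\kappa)$ overlap discrepancies shrink, so that the total positivity loss $\eps_\kappa\omega$ tends to $0$; this requires an additive-constant shift before taking the regularized maximum, not just the regularized maximum itself. Third, monotone decrease of the sequence is not achieved by ``passage to a subsequence''; Demailly obtains it from the log-subadditivity of the Bergman kernels (roughly $\psi_{2\kappa}\leq\psi_\kappa+O(1/\kappa)$) together with small constant shifts. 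None of these is a fatal flaw, but each is a genuine step of the argument that the sketch treats as routine.
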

In its original version, $X$ is assumed to be compact and $\theta_\kappa$ may not have analytic singularities (instead $\nu(\theta_\kappa,x)=\nu(\theta,x)$). Nevertheless, the proof also works in the relatively compact setting as explained in the paragraph after \cite{Demailly92-reg-closed-pos-currents}*{Main Thm~1.1}.
Furthermore, \cite{Demailly92-reg-closed-pos-currents}*{Rem.~5.15} ensures the existence of $\theta_\kappa$ with analytic singularities (see \autoref{df:QPshAS}) by weakening the condition on $\theta_\kappa$'s Lelong numbers satisfying (ii). %

\section{Pushforwards of MA products by submersions}%
\label{sc:pushforwards}
\noindent
Following \cite{LRRS}*{Sec.~4 \& Prop.~5.3},
we can define Monge-Ampère products for higher degrees than in \BTD if one considers the pushforwards by proper holomorphic submersions. %

\begin{prop} %
\label{pr:extBTD-def}
	Let $\pi\colon X\rightarrow Y$ be a proper holomorphic submersion between complex manifolds $X$ and $Y$ with \dimF-dimensional fibres, %
	let $T$ be a closed positive $(p,p)$-current on $X$, and
	let $\theta$ be a closed $\gamma$-pos $(1,1)$-current %
	for a (positive) $(1,1)$-form $\gamma$
	such that for any small enough open $V\subset Y$,
	there is a \emph{closed} positive $(1,1)$-form $\gamma_{+}$ on $\pi^{-1}(V)$ with $\gamma\leq\gamma_{+}$  (for example, $\pi$ is Kähler, see \autoref{rk:Kaehler-pi}).
	Let $\theta_{\kappa}\in\{\theta\}_{\BC}$ be closed $\gamma$-positive $(1,1)$-forms
	such that the sequence $\theta_\kappa$ is decreasing pointwise to $\theta$,
	\ie locally there are $\gamma$-psh potentials of $\theta_{\kappa}$ decreasing pointwise to a $\gamma$-psh potential of $\theta$; and let $k$ be a positive integer.
	If $\pi(L(\theta))$ is contained in analytic $A$ with $\codim A\geq k+p-m$, then
	\begin{equation*}
		S:=\pi_\ast \big( [\theta]^{k} \wedge T \big):= \lim_{\kappa\rightarrow\infty} \pi_\ast \big( \theta_\kappa^{k} \wedge T\big)
	\end{equation*}
	is a closed real $(k{+}p,k{+}p)$-current on $Y$, %
	independent of the choice of $\theta_\kappa$, %
	and locally\footnote{to be more precise, on all $V$ from before} the difference of two closed positive currents.
	If $\theta$ is positive, then $S$ is positive. %
\end{prop}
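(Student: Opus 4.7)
The plan is to work locally on $Y$ and decompose $\theta_\kappa^k \wedge T$ as an alternating sum of closed positive currents, after which \autoref{pr:convergence-of-positive-currents} can be applied to each pushforward summand. Fix a small $V \subset Y$ on which the hypothesized closed positive $(1,1)$-form $\gamma_+ \geq \gamma$ exists on $U := \pi^{-1}(V)$. Since each $\theta_\kappa$ is a fortiori $\gamma_+$-positive, I may replace $\gamma$ by $\gamma_+$ and assume $\gamma$ itself is closed and positive on $U$; let $u_\kappa \searrow u$ be the decreasing sequence of smooth psh potentials of $\theta_\kappa + \gamma$ converging to a psh potential $u$ of $\theta + \gamma$, and note that $L(\theta) = L(u) \subset \pi^{-1}(A)$ by hypothesis.

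Expanding $\theta_\kappa = (\theta_\kappa+\gamma) - \gamma$ binomially yields
\begin{equation*}
\pi_\ast\bigl(\theta_\kappa^k \wedge T\bigr) = \sum_{i=0}^{k} (-1)^i \binom{k}{i}\, \pi_\ast\bigl(\gamma^i \wedge (\theta_\kappa+\gamma)^{k-i} \wedge T\bigr),
\end{equation*}
and each summand is the pushforward of a closed positive current on $U$. I claim each summand converges weakly on $V$ to a closed positive current independent of $u_\kappa$, which I prove by verifying the hypotheses of \autoref{pr:convergence-of-positive-currents} on $V$ for the analytic set $A$ (all summands share the same bidegree, matching $\codim A \geq k+p-m$). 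First, on $U \setminus \pi^{-1}(A)$ the potential $u$ is locally bounded, so the $u_\kappa$ are locally uniformly bounded there, and Bedford-Taylor continuity (\autoref{thm:BT-continuity}) gives weak convergence of $(\theta_\kappa+\gamma)^{k-i} \wedge T$; properness of $\pi$ propagates this to weak convergence of the pushforwards on $V \setminus A$. Second, for any bump form $\beta$ at $y \in A \cap V$ with $\supp dd^c\beta \cap A = \emptyset$ (\autoref{rk:BumpFormExists}), the projection formula rewrites the bump integral as
\begin{equation*}
\int_U \gamma^i \wedge (\theta_\kappa+\gamma)^{k-i} \wedge T \wedge \pi^\ast\beta,
\end{equation*}
and the disjointness of $\supp dd^c \pi^\ast\beta = \pi^{-1}(\supp dd^c\beta)$ from $L(\theta) \subset \pi^{-1}(A)$ makes the $u_\kappa$ uniformly bounded on a neighbourhood of it; a standard integration by parts transferring $dd^c$ from the $u_\kappa$-factors onto $\pi^\ast\beta$, combined with monotone convergence, then yields convergence of the bump integrals to a limit depending only on $u$.

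Summing with signs, $S$ exists, is closed and real, and is locally the difference of the even-$i$ and odd-$i$ partial sums, both closed and positive. Independence of $\theta_\kappa$ is inherited from the uniqueness in \autoref{pr:convergence-of-positive-currents}. When $\theta$ itself is positive, one may take $\gamma \equiv 0$ (on possibly smaller $V$), collapsing the expansion to the $i=0$ term, so $S = \lim_\kappa \pi_\ast(\theta_\kappa^k \wedge T)$ is itself closed and positive. The main obstacle is the integration by parts in the bump integral: it must be set up so that $dd^c \pi^\ast\beta$ never meets the unbounded locus of $u_\kappa$, and it is exactly here, rather than in a bidegree count, that the relaxed codimension assumption $\codim A \geq k+p-m$ (in place of $\codim A \geq k+p$ required by \BTD on $X$) gets exploited through the fibre dimension $m$.
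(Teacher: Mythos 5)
Your strategy mirrors the paper's: replace $\gamma$ by the closed positive $\gamma_+$, expand $\theta_\kappa^k$ binomially in $\theta_\kappa + \gamma_+$ and $\gamma_+$, obtain weak convergence of each positive summand off $A$ via Bedford--Taylor, control the bump-form integrals by an integration by parts, and conclude via \autoref{pr:convergence-of-positive-currents}. The outline is sound, but the integration-by-parts step has a genuine gap. You define $u_\kappa$ as smooth psh potentials of $\theta_\kappa + \gamma_+$ and propose to ``transfer $dd^c$ from the $u_\kappa$-factors onto $\pi^\ast\beta$,'' but these $u_\kappa$ are only \emph{local} potentials: $U = \pi^{-1}(V)$ contains the compact fibres, so there is no global smooth psh potential of $\theta_\kappa + \gamma_+$ on $U$. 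Integrating by parts against a function that only exists locally is not a well-defined manoeuvre, and fixing this is precisely where the real work sits.

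The paper's resolution is to fix one \emph{global} closed real $(1,1)$-form $\alpha$ on $U$ in the same \BC-class as $\theta + \gamma_+$ (for instance $\alpha = \theta_1 + \gamma_+$), so that $\theta_\kappa + \gamma_+ = \alpha + dd^c q_\kappa$ for \emph{globally defined} $\alpha$-psh functions $q_\kappa \searrow q$ on $U$ — these exist because $\theta_\kappa \in \{\theta\}_{\BC}$ and the pointwise decrease is declared through local potentials whose differences are global. One then expands $(\alpha + dd^c q_\kappa)^k$ binomially and moves a \emph{single} $dd^c q_\kappa$ onto $\pi^\ast\beta$ per term; since $\supp dd^c\beta$ misses $A$, the functions $q_\kappa$ are uniformly bounded near $\supp\pi^\ast dd^c\beta$ and Bedford--Taylor gives convergence of the resulting integrals. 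Your proof never introduces this auxiliary form $\alpha$ nor globalizes the potentials, so the integration by parts has nothing to act on as written. Once that is inserted, the remainder of your argument — the codimension bookkeeping matching \autoref{pr:convergence-of-positive-currents}, and deducing positivity by invoking independence of $S$ from $\theta_\kappa$ to pick positive approximants — goes through as in the paper.
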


	We would like to stress that in general, $[\theta]^k\wedge T$ is not defined as a current on $X$ for $\codim A-p<k\leq \codim A-p+\dimF$ in the setting above.  
	Later, we will see that it can be defined for currents with analytic singularities, see \autoref{sc:currents-analy-sing}.

\begin{rem}\label{rk:Kaehler-pi}
	Following \cite[Def.~4.1]{Fujiki78}, %
	a holomorphic function $\pi\colon  X\rightarrow Y$ is called Kähler if there is an open covering $\{U_\kappa\}$ of $X$ and plurisubharmonic functions $\ph_\kappa$ on $U_\kappa$ \st $\ph_\kappa$ is strictly plurisubharmonic on $\pi^{-1}(y)\cap U_\kappa$ and $\ph_\kappa-\ph_\lambda$ is pluriharmonic on $\pi^{-1}(y)\cap U_\kappa \cap U_\lambda$ for all $y\in Y$.
	In case of that $\pi$ is a locally trivial proper holomorphic submersion (which is equivalent to all fibres of $\pi$ are biholomorphic to each other, \cf \cite{FischerGrauert65}), we get $\pi$ is Kähler if its fibres are Kähler.
	In general, %
	deformations of Kähler manifolds are not necessarily Kähler morphisms,
	see \cite[Exp.~3.9]{Bingener83-II} due to Deligne.
	
	By \cite[Lem.~4.4]{Fujiki78}, %
	the preimages $U=\pi^{-1}(V)$ of a proper Kähler $\pi\colon  X\rightarrow Y$ are Kähler for any small enough open $V\subset Y$, \ie there is a Kähler form $\omega_{U}$ on $U$.
	Moreover, for any $(1,1)$-form $\gamma$ on $X$ and any small enough open $V$, there exists a constant $C$ such that $\gamma_+:=C\omega_U\geq \gamma$ with closed positive form $\gamma_+$ on $U=\pi^{-1}(V)$. Analogously, every closed $(1,1)$-form $\alpha$ can be decomposed as $\alpha=\alpha_+-\alpha_-$ with closed positive forms $\alpha_\pm$ on $\pi^{-1}(V)$.	

\end{rem}

\begin{proof}[Proof of \autoref{pr:extBTD-def}.]
	By transferring the argumentation in \cite{LRRS}*{Sec.~4} to our setting, we first prove the positive case as follows.
	
	We may define $S$ local. Therefore, we can shrink $Y$ such that $\gamma_{+}$ is defined on $X=\pi^{-1}(Y)$.
	Set $\vartheta:=\theta+\gamma_+\geq 0$ and $\vartheta_\kappa=\theta_\kappa+\gamma_+\geq 0$.
	Let $A$ denote the analytic set of codimension $k+p-m$ such that $\pi(L(\vartheta))=\pi(L(\theta))\subset A$.
	On $X\minus\pi^{-1}(A)$, Bedford-Taylor (see \autoref{thm:BT-continuity}) implies that we can define $\vartheta^k\wedge T:= \lim_{\kappa\rightarrow\infty} \vartheta^{k} \wedge T$.
	The direct image $S':=\pi_\ast \big( \vartheta^{k} \wedge T \big)$ is a closed positive current on $Y\minus A$ which is the weak limit of the closed positive currents $S_\kappa:=\pi_\ast \big(\vartheta_\kappa^{k} \wedge T\big)$.
	We set $s:=\dim Y-k-p+m=n-k-p$.
	As $A$ is of codimension $k+p-m$, there is an $(s,s)$-bump form $\beta$ at $y$ with arbitrarily small support such that $A\cap \supp dd^c\beta=\emptyset$, see \autoref{rk:BumpFormExists}.
	Let $\alpha$ be a closed real $(1,1)$-form in the same \BC-class of $\vartheta$ (and $\vartheta_\kappa$).
	Since $\vartheta_\kappa=\theta_\kappa+\gamma_+$ is decreasing pointwise to $\vartheta=\theta+\gamma_+$, there is a sequence of $\alpha$-psh functions $q_\kappa$ with $dd^c q_\kappa=\vartheta_\kappa-\alpha$ which is decreasing pointwise to an $\alpha$-psh function $q$ with $dd^c q=\vartheta-\alpha$. We get
	\begin{align*}
		\int S_\kappa\wedge \beta
		=&\int \vartheta_\kappa^{k} \wedge T \wedge\pi^\ast \beta\\
		=&\int \Big(\alpha^k+ \sum\big._{i=1}^k\smbinom{k}{i}\alpha^{k-i}\wedge (dd^cq_\kappa)^{i}\Big)
			\wedge T \wedge\pi^\ast \beta\\
		=&\int \Big(\alpha^k\wedge\pi^\ast \beta+ q_\kappa\cdot\pi^\ast dd^c\beta\wedge\sum\big._{i=1}^k\smbinom{k}{i}\alpha^{k-i}\wedge (dd^cq_\kappa)^{i-1}\Big)\wedge T \\
		& \overset{\kappa\rightarrow\infty}{\longrightarrow}
		\int \Big(\alpha^k\wedge\pi^\ast \beta+ q\cdot\pi^\ast dd^c\beta\wedge\sum\big._{i=1}^k\smbinom{k}{i}\alpha^{k-i}\wedge (dd^cq)^{i-1}\Big)\wedge T
		=:S_\beta
	\end{align*}%
	whereby the convergence follows from Bedford-Taylor as $dd^c\beta$ has no support in $A$, see \autoref{thm:BT-continuity}.
	By \autoref{pr:convergence-of-positive-currents},
	we get that $S'$ uniquely extends to a closed positive current $S$ which is the weak limit of $S_\kappa$.
	
	\smallskip
	
	The quasipositive case is now a direct consequence of the following equation, \cf \eqref{eq:BTD-for-quasi}.
	\begin{equation}\begin{aligned} %
		\label{eq:def-exBTD-quasi}
		\pi_\ast\big([\theta]^k\wedge T\big) &
		= \pi_\ast\big([\theta+\gamma_+ -\gamma_+]^k \wedge T\big)
		:=\sum\big._{i=0}^k (-1)^i \smbinom{k}{i} \pi_\ast \left( [\theta+\gamma_+]^{k-i}\wedge\gamma_+^{i}\wedge T\right)\\
		& \overset{\Def}= \lim_{\kappa\rightarrow\infty} \sum\big._{i=0}^k (-1)^i \smbinom{k}{i} \pi_\ast\left( (\theta_\kappa+\gamma_+)^{k-i}\wedge\gamma_+^{i}\wedge T\right) = \lim_{\kappa\rightarrow\infty} \pi_\ast\big(\theta_\kappa^k\wedge T\big).
	\end{aligned}\end{equation}%
	Thereby, the terms %
	in the second line of \eqref{eq:def-exBTD-quasi} are well-defined by the first part of this proof.
	Due to the last equation in \eqref{eq:def-exBTD-quasi}, this definition is independent of the choice of $\gamma_+$.
\end{proof}

\begin{rem}\label{rk:extBTDsimplCont}
	If $\theta_\kappa$ is a sequence of $\gamma$-pos currents (not necessarily smooth) which is decreasing pointwise to $\theta$, then $L(\theta_\kappa)\subset L(\theta)$. Therefore, 
	$\pi_\ast\big( [\theta_\kappa]^{k} \wedge T\big)$ are also defined by \autoref{pr:extBTD-def} for all $\kappa$.
	Moreover,	the proof above works also for non-smooth $\theta_\kappa$ such that $\pi_\ast \big( [\theta]^{k} \wedge T \big)= \lim_{\kappa\rightarrow\infty} \pi_\ast \big( [\theta_\kappa]^{k} \wedge T\big)$. %
	In other words, the conclusions of \autoref{pr:extBTD-def} are also correct under
	the exactly same assumptions within the proposition beside of that $\theta_\kappa$ are allowed to be currents instead of forms.
	This is worth mentioning since we do not need the extra assumption on $\{\theta\}_{\BC}$ which will be assumed to obtain the (full) monotone continuity result, \autoref{thm:extBTD-continuity} below.
\end{rem}

\begin{rem}\label{rk:calculations-relative-positive}
	If there is a closed (positive) $(1,1)$-form $\omega$ on $Y$ such that  $\theta$ is $\pi^\ast\omega$-positive, then we obtain as in \eqref{eq:def-exBTD-quasi} that
	\begin{align*}%
		\pi_\ast\big([\theta]^k\wedge T\big)
		=&\sum\big._{i=0}^k (-1)^i \smbinom{k}{i} \pi_\ast \left( [\theta+\pi^\ast\omega]^{k-i}\wedge\pi^\ast\omega^{i}\wedge T\right)\\
		=&\sum\big._{i=0}^{k-(\dimF-p)} (-1)^i \smbinom{k}{i} \pi_\ast \left( [\theta+\pi^\ast\omega]^{k-i}\wedge T\right)\wedge\omega^{i}
	\end{align*}%
	where the last equation follows from the projection formula and that $\pi$'s fibres are of dimension \dimF. %
	For $k+p=\dimF+1$, we get that
	\begin{equation*}
		\pi_\ast\big([\theta]^{\dimF-p+1}\wedge T\big) = \pi_\ast \left( [\theta+\pi^\ast\omega]^{\dimF-p+1}\wedge T\right)-(\dimF-p+1)\omega\wedge\pi_\ast\left( [\theta+\pi^\ast\omega]^{\dimF-p}\wedge T\right).
	\end{equation*}
	If $T=\pi^\ast S$ for a closed positive current $S$ on $Y$, then %
	\begin{align*}
		\pi_\ast\big([\theta]^k\big)\wedge S
		=\ & \sum\big._{i=0}^{k-m} (-1)^i \smbinom{k}{i} \pi_\ast \big( [\theta+\pi^\ast\omega]^{k-i}\big)\wedge\omega^{i}\wedge S \hbox{\quad and}\\
		\pi_\ast\big([\theta]^{m+1}\big)\wedge S
		=\ & \pi_\ast \big( [\theta+\pi^\ast\omega]^{m+1}\big)\wedge S
		 -(m{+}1)\,\omega\wedge\pi_\ast\big( [\theta+\pi^\ast\omega]^{m}\big)\wedge S.
	\end{align*}%
\end{rem}

\bigskip

We get the following monotone continuity result for the extended \BTD MA products defined by \autoref{pr:extBTD-def}.
\begin{thm}\label{thm:extBTD-continuity}
	Let $\pi\colon X\rightarrow Y$ be a proper holomorphic submersion between complex manifolds $X$ and $Y$ with \dimF-dimensional fibres, %
	and let $T$ be a closed positive $(p,p)$-current on $X$.
	For $i=1\kdots t$,
	let $\theta_i$ be closed $\gamma$-pos $(1,1)$-currents on $X$
	for a (pos) $(1,1)$-form $\gamma$
	such that for any small enough open $V\subset Y$,
	there are closed positive $(1,1)$-forms $\gamma_{+}$ and $\alpha_{i,\pm}$ with $\gamma\leq\gamma_{+}$ and $\alpha_{i,+}-\alpha_{i,-}\in\{\theta_i\}_{\BC}$ on $\pi^{-1}(V)$  (for example, $\pi$ is Kähler).
	Let $\theta_{i,\kappa}\in\{\theta_i\}_{\BC}$ be closed $\gamma$-positive $(1,1)$-currents decreasing pointwise to $\theta_i$,
	\ie there are local $\gamma$-psh potentials of $\theta_{i,\kappa}$ decreasing pointwise to a $\gamma$-psh potential of $\theta_{i}$; %
	and let $k_i$ be positive integers.
	If $\bigcup_{i=1}^t \pi(L(\theta_i))$ is contained in an analytic $A$ with $\codim A\geq k_1+\ldots+k_t-m+p$, then
	\begin{equation}
		\pi_\ast \big( [\theta_t]^{k_t}\wedge \cdots \wedge[\theta_1]^{k_1}\wedge T\big)
		= \lim_{\kappa\rightarrow\infty} \pi_\ast \big([\theta_{t,\kappa}]^{k_t}\wedge \cdots \wedge[\theta_{1,\kappa}]^{k_1}\wedge T\big).
	\end{equation}
\end{thm}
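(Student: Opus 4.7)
The plan is to mirror the structure of \autoref{pr:extBTD-def} and adapt it to the mixed setting via multilinearity and a single integration by parts per term. Since the assertion is local on $Y$, I fix a small open $V\subset Y$ on which $\gamma_+$ and closed real representatives $\alpha_i\in\{\theta_i\}_{\BC}$ exist, and work with $X=\pi^{-1}(V)$. Using the binomial expansion
\[
[\theta_i]^{k_i}=\sum_{j=0}^{k_i}(-1)^j\binom{k_i}{j}\gamma_+^{j}\wedge(\theta_i+\gamma_+)^{k_i-j}
\]
together with the analogous one for $\theta_{i,\kappa}$ (\cf\ \eqref{eq:def-exBTD-quasi}), multilinearity reduces everything to the case where all $\theta_i$ and all $\theta_{i,\kappa}$ are closed \emph{positive} $(1,1)$-currents. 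From now on I assume this, and write $\theta_i=\alpha_i+dd^c q_i$, $\theta_{i,\kappa}=\alpha_i+dd^c q_{i,\kappa}$ with $q_{i,\kappa}\searrow q_i$ pointwise.

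Set $S_\kappa:=\pi_\ast\big(\theta_{t,\kappa}^{k_t}\wedge\cdots\wedge\theta_{1,\kappa}^{k_1}\wedge T\big)$, which is a closed positive current on $V$ (defined, if necessary, by first replacing each $\theta_{i,\kappa}$ by a smooth Demailly approximation and iterating the proposition). Outside $\pi^{-1}(A)$ every $q_i$ is locally bounded, so \autoref{thm:BT-continuity} defines the mixed Monge--Amp\`ere product $\theta_t^{k_t}\wedge\cdots\wedge\theta_1^{k_1}\wedge T$ there and provides weak convergence of the $\kappa$-approximants; pushing forward yields weak convergence of $S_\kappa$ on $Y\setminus A$ to a closed positive current $S'$. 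To extend across $A$ I will invoke \autoref{pr:convergence-of-positive-currents}. Given $y\in A$, the hypothesis $\codim A\ge k_1+\cdots+k_t+p-m$ combined with $\dim Y=n-m$ gives $\dim A\le s:=n-\sum_i k_i-p$, so \autoref{rk:BumpFormExists} furnishes an $(s,s)$-bump form $\beta$ at $y$ with arbitrarily small support and $\supp dd^c\beta\cap A=\emptyset$; hence $\supp\pi^\ast dd^c\beta$ is disjoint from $\pi^{-1}(A)\supseteq\bigcup_i L(\theta_i)$.

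Testing $S_\kappa$ against $\beta$ and expanding $\theta_{i,\kappa}^{k_i}=(\alpha_i+dd^c q_{i,\kappa})^{k_i}$ factor-wise, each summand of
\[
\int S_\kappa\wedge\beta=\int \theta_{t,\kappa}^{k_t}\wedge\cdots\wedge\theta_{1,\kappa}^{k_1}\wedge T\wedge\pi^\ast\beta
\]
is either a pure-$\alpha$ term (independent of $\kappa$) or carries at least one $dd^c q_{i_0,\kappa}$ factor; in the latter case one integration by parts moves that $dd^c$ onto $\beta$, producing an integrand of the form $q_{i_0,\kappa}\cdot R_\kappa\wedge\pi^\ast dd^c\beta$, where $R_\kappa$ is a product of closed forms $\alpha_j^{a_j}$, Monge--Amp\`ere factors $(dd^c q_{j,\kappa})^{b_j}$, and $T$. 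Because $\supp\pi^\ast dd^c\beta$ lies off $\pi^{-1}(A)$, all $q_{j,\kappa}$ are locally bounded uniformly in $\kappa$ on the effective support, so both $R_\kappa$ and the full integrand fall under classical Bedford--Taylor (\autoref{thm:BT-continuity}); its monotone continuity forces the integral to converge to a limit $S_\beta$ independent of the choice of the approximating sequences. Combining the weak convergence on $Y\setminus A$ with the numerical convergence $\int S_\kappa\wedge\beta\to S_\beta$ for arbitrarily small bump forms, \autoref{pr:convergence-of-positive-currents} produces a closed positive limit current $S$ on $Y$, which we take as $\pi_\ast\big([\theta_t]^{k_t}\wedge\cdots\wedge[\theta_1]^{k_1}\wedge T\big)$; the general (quasi-positive) case then follows from the multilinear expansion recorded in the first paragraph.

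The main obstacle is the bookkeeping of the single integration-by-parts step when several $dd^c q_{i,\kappa}$ factors from different indices are present: one must verify that the residual $R_\kappa$ is a closed current (so that the identity $\int dd^c u\wedge R\wedge\phi=\int u\, R\wedge dd^c\phi$ is legitimate) and, more importantly, that on $\supp\pi^\ast dd^c\beta$ every $q_{j,\kappa}$ is simultaneously in the Bedford--Taylor regime. Both issues are resolved by the codimension assumption on $A$, which is precisely what allows $\beta$ to be chosen with $\supp dd^c\beta\cap A=\emptyset$; this single structural input converts the mixed higher-codimension problem into a family of classical Bedford--Taylor problems whose limits agree term by term and are independent of the approximations.
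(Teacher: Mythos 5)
Your overall strategy is the same as the paper's---reduce to the all-positive case by multilinearity (the paper does this via a ``mixed'' induction on the number of quasipositive factors, which is just your binomial expansion written inductively), check weak convergence off $A$ via Bedford--Taylor, and verify the bump-form hypothesis of \autoref{pr:convergence-of-positive-currents}. However, the key step of your argument---the single integration by parts producing $\int q_{i_0,\kappa}\, R_\kappa\wedge\pi^\ast dd^c\beta$---is asserted rather than justified, and this is where the real difficulty lies. The identity $\int dd^c q_{i_0,\kappa}\wedge R_\kappa\wedge\pi^\ast\beta=\int q_{i_0,\kappa}\, R_\kappa\wedge\pi^\ast dd^c\beta$ is not elementary when the $q_{j,\kappa}$ are unbounded: the mixed product $R_\kappa=\prod_j\alpha_j^{a_j}\wedge(dd^c q_{j,\kappa})^{b_j}\wedge T$ is in general \emph{not} a well-defined Bedford--Taylor current on all of $X$, because $\pi^{-1}(A)$ can have codimension far below $\sum_j b_j+p$. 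Consequently the left-hand side of your identity is not a legitimate pairing on $X$ before pushing forward, and one cannot simply ``integrate by parts once.'' You acknowledge this as ``the main obstacle'' but then claim it is resolved by the codimension hypothesis alone; that is not enough.

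The paper fills this gap with two ingredients you omit. First, a \emph{second layer of approximation}: for the term being tested, it replaces the potential $q$ of the top factor and the remaining $\vartheta_i$ by \emph{smooth} approximants $q^{(\lambda)}$, $\vartheta_i^{(\lambda)}$, so that the integration by parts is literal; it then localizes (replacing $\beta$ by $\tilde\beta=\chi\beta$ supported off a neighbourhood $U$ of $\pi^{-1}(A)$, which is legitimate precisely because $\supp dd^c\beta\cap A=\emptyset$), converts back, and finally lets $\lambda_1,\ldots,\lambda_t\to\infty$ by Bedford--Taylor on $X\setminus U$. Second, a \emph{secondary induction on $t$}: after splitting $\vartheta_t=dd^c q+\alpha$, the $\alpha$-part $\int\pi_\ast([\vartheta_{t-1}]\wedge\cdots\wedge[\vartheta_1]\wedge\alpha\wedge T)\wedge\beta$ still has unbounded factors and is handled by invoking the theorem for $t-1$ factors; your expansion of \emph{all} factors simultaneously sidesteps naming this induction, but then all the technical burden falls on the unjustified integration-by-parts identity. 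In short, the plan is right, but without the double-approximation-plus-localization step the proof does not go through.
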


\begin{proof}
	Without loss of generality, we may assume that $k_1=\ldots=k_t=1$. The assumption on the codimension of $A$ reads then $\codim A\geq t+p-\dimF$.
	By a recursive application of \autoref{pr:extBTD-def},
	we obtain that $\pi_\ast \big( [\theta_t]\wedge \cdots \wedge[\theta_1]\wedge T\big)$ and (in case that $\theta_{i,\kappa}$ is not smooth) $\pi_\ast \big([\theta_{t,\kappa}]\wedge \cdots \wedge[\theta_{1,\kappa}]\wedge T\big)$ are well-defined currents for all $\kappa$.
	
	As the statement is local with respect to $Y$, we may shrink $Y$ and $X=\pi^{-1}(Y)$ such that $\gamma_{+}$ and $\alpha_\pm$ are defined on $X$.
	Let us set $\vtheta_i:=\theta_{i}+\gamma_{+}$ and $\vtheta_{i,\kappa}:=\theta_{i,\kappa}+\gamma_{+}$. By assumption, these are closed positive currents, and we get that the sequence $\vtheta_{i,\kappa}$ is decreasing pointwise to $\vtheta_{i}$ in the sense defined above.
	To infer the statement in the quasipos case from the positive one, 
	we are going to prove
	\begin{equation}\label{eq:mixed-continuity-IA}
		\begin{split}
		&\pi_\ast \big( [\theta_t]\wedge \cdots \wedge[\theta_{i+1}]\wedge[\vtheta_{i}]\wedge \cdots \wedge[\vtheta_1]\wedge T\big)\\
		&\hspace*{10em}= \lim_{\kappa\rightarrow\infty} \pi_\ast \big([\theta_{t,\kappa}]\wedge \cdots \wedge[\theta_{i+1,\kappa}]\wedge[\vtheta_{i,\kappa}]\wedge \cdots \wedge[\vtheta_{1,\kappa}]\wedge T\big)
		\end{split}
	\end{equation}
	by induction over the number of $\theta_j$-factors, $t{-}i$.

	First, we proof the induction step, $t{-}i\mapsto t{-}i{+}1$ for $1\leq i\leq t$ (\ie $i\mapsto i{-}1$).
	By induction assumption, we have \eqref{eq:mixed-continuity-IA}.
	Yet, the induction assumption also implies that
	\begin{equation}
		\label{eq:mixed-continuity-IA+}
		\begin{split}
			&\pi_\ast \big( [\theta_t]\wedge \cdots \wedge[\theta_{i+1}]\wedge\gamma_{+}\wedge[\vtheta_{i-1}]\wedge \cdots \wedge[\vtheta_1]\wedge T\big)
			=\\
			&\hspace{10em}\lim_{\kappa\rightarrow\infty} \pi_\ast \big([\theta_{t,\kappa}]\wedge \cdots \wedge[\theta_{i+1,\kappa}]\wedge\gamma_{+}\wedge[\vtheta_{i,\kappa}]\wedge \cdots \wedge[\vtheta_{1,\kappa}]\wedge T\big)	
		\end{split}
	\end{equation}
	as there are $t{-}1-(i{-}1)=t{-}i$ of $\theta_j$-factors. Thereby, we use that $\gamma_+$ is positive and closed. 
	Summing up \eqref{eq:mixed-continuity-IA} and \eqref{eq:mixed-continuity-IA+}, we obtain 
	(since $\vtheta_i=\theta_i+\gamma_{+}$ and $\vtheta_{i,\kappa}=\theta_{i,\kappa}+\gamma_+$)
	\pagebreak[0]
	\begin{equation*}	%
		\pi_\ast \big( [\theta_t]\wedge \cdots \wedge[\theta_{i}]\wedge[\vtheta_{i-1}]\wedge \cdots \wedge[\vtheta_1]\wedge T\big)
		=\!\lim_{\kappa\rightarrow\infty} \pi_\ast \big([\theta_{t,\kappa}]\wedge \cdots \wedge[\theta_{i,\kappa}]\wedge[\vtheta_{i-1,\kappa}]\wedge \cdots \wedge[\vtheta_{1,\kappa}]\wedge T\big)
	\end{equation*}
	which is the case $t-(i-1)=(t{-}i)+1$. This concludes the induction step.
	\smallskip

	Next, we prove the base case $t{-}i=0$, \ie there is not any $\theta_j$-factor and all factors are positive.
	We obtain that the currents $S=\pi_\ast \big( [\vtheta_t]\wedge \cdots \wedge[\vtheta_1]\wedge T\big)$ and $S_\kappa:=\pi_\ast \big( [\vtheta_{t,\kappa}]\wedge \cdots \wedge[\vtheta_{1,\kappa}]\wedge T\big)$ are closed positive $(t{+}p{-}m,t{+}p{-}m)$-currents. Therefore, the weak convergence of $S_\kappa$ to $S$ follows from \autoref{pr:convergence-of-positive-currents} when its assumptions (i) and (ii) are satisfied which we show as follows.
	\smallskip

	\ManualItem{i}
	Since $\vtheta_{i,\kappa}$ is decreasing pointwise to $\vtheta_i$, we get that $L(\vtheta_{i,\kappa})\subset L(\vtheta_i)$ for all $i$ and $\kappa$.
	By \autoref{thm:BT-continuity}, we obtain that 
	\begin{equation*}
		\vtheta_t\wedge \cdots \wedge\vtheta_1\wedge T
		= \lim_{\kappa\rightarrow\infty}\vtheta_{t,\kappa}\wedge \cdots \wedge\vtheta_{1,\kappa}\wedge T
	\end{equation*}
	on $X\minus \pi^{-1}(A)\subset X\minus \bigcup_i \pi(L(\vtheta_i))$, and 
	\begin{equation*}
		S=\pi_\ast \big( \vtheta_t\wedge \cdots \wedge\vtheta_1\wedge T\big)
		= \lim_{\kappa\rightarrow\infty} \pi_\ast \big(\vtheta_{t,\kappa}\wedge \cdots \wedge\vtheta_{1,\kappa}\wedge T\big)=\lim_{\kappa\rightarrow\infty}S_\kappa
	\end{equation*}
	on $Y\minus A$.
	\smallskip

	\ManualItem{ii}
	Since $\codim A\geq s:=t+p-m$, for each point $x\in A$, there is an $(n{-}s,n{-}s)$-bump form $\beta$ %
	with arbitrarily small support and $dd^c\beta\cap A=\emptyset$, see \autoref{rk:BumpFormExists}.
	By assumption, $\alpha:=\alpha_{t,+}+\gamma_+-\alpha_{t,-}$ is the difference of two closed positive forms and in the same \BC-class as $\vtheta_t=\theta_t+\gamma_+$, \ie there is a quasipsh function $q$ on $X$ such that $\vtheta_t-\alpha=dd^c q$. By the assumption on $\vtheta_{t,\kappa}$, we get quasipsh functions $q_\kappa$ on $X$ decreasing pointwise to $q$ \st $\vtheta_{t,\kappa}-\alpha=dd^c q_\kappa$.
	We get %
	\begin{align*}%
		\int	\pi_\ast \big( [\vtheta_t]\wedge \cdots \wedge[\vtheta_1]\wedge T\big)\wedge \beta
		= & \int \pi_\ast \big([dd^c q]\wedge[\vtheta_{t-1}]\wedge \cdots \wedge[\vtheta_1]\wedge T\big)\wedge \beta\\
		& + \int \pi_\ast \big([\vtheta_{t-1}]\wedge \cdots \wedge[\vtheta_1]\wedge \alpha\wedge T\big)\wedge \beta%
	\end{align*}%
	where the last term is defined since $\alpha$ is the difference of two closed positive forms.
	Let $q^{(\lambda)}$%
	be smooth $\alpha$-psh functions decreasing pointwise to $q$, and let $\vtheta_i^{(\lambda)}$ be positive forms decreasing pointwise to $\vtheta_i$ for $i=1\kdots t{-}1$.
	Since recursively defined by \autoref{pr:extBTD-def}, we obtain %
	\begin{align*}%
		&\int \pi_\ast \big([dd^c q]\wedge[\vtheta_{t-1}]\wedge \cdots \wedge[\vtheta_1]\wedge T\big)\wedge \beta
		\\&\hspace*{10em}
		=
		\lim_{\lambda_t\rightarrow \infty }{\cdots}\lim_{\lambda_1\rightarrow \infty }
		\int dd^c q^{(\lambda_{t})}\wedge\vtheta_{t-1}^{(\lambda_{t-1})}\wedge \cdots \wedge\vtheta_1^{(\lambda_{1})}\wedge T\wedge \pi^\ast \beta.%
	\end{align*}%
	Since $q^{(\lambda)}$ are (globally defined) functions on $X$, we can apply integration by parts (as the definition of $d$ and $d^c$ on currents)
	in the limit-term of the RHS and get 
	\begin{align*}
		\int	dd^c q^{(\lambda_{t})}\wedge\vtheta_{t-1}^{(\lambda_{t-1})}\wedge \cdots \wedge\vtheta_1^{(\lambda_{1})}\wedge T\wedge \pi^\ast\beta%
		\hspace{-5em}&\\
		& = 	\int	q^{(\lambda_{t})}\cdot\vtheta_{t-1}^{(\lambda_{t-1})}\wedge \cdots \wedge\vtheta_1^{(\lambda_{1})}\wedge T\wedge dd^c\pi^\ast \beta\\
		& = \int_{X\minus U} q^{(\lambda_{t})}\cdot\vtheta_{t-1}^{(\lambda_{t-1})}\wedge \cdots \wedge\vtheta_1^{(\lambda_{1})}\wedge T\wedge dd^c\pi^\ast \tilde\beta\\
		& = \int_{X\minus U} dd^c q^{(\lambda_{t})}\wedge\vtheta_{t-1}^{(\lambda_{t-1})}\wedge \cdots \wedge\vtheta_1^{(\lambda_{1})}\wedge T\wedge \pi^\ast\tilde\beta\\
		&\hspace*{-.35em}
		\overset{\lambda_1\rightarrow\infty }{\longrightarrow}\cdots
		\overset{\lambda_t\rightarrow\infty }{\longrightarrow}
		 \int_{X\minus U}	\hspace{-1em}dd^c q\wedge \vtheta_{t-1}\wedge \cdots \wedge\vtheta_1\wedge T\wedge \pi^\ast\tilde\beta
	\end{align*}	%
	where $U=U_\delta=\pi^{-1}(V_\delta)$ for a neighbourhood $V=V_\delta$ of $A$ in $X$ (so small that
	\pagebreak[0]
	$V_\delta\cap \supp dd^c\beta=\emptyset$)
	and $\tilde\beta=\chi\beta$ for a smooth cutoff function $\chi$ with support on $Y\minus U$ and 
	$\chi|_{\supp dd^c\beta}=1$. %
	As $L(\vartheta_i)=L(\theta_i)\subset U$, the convergence is implied by Bedford-Taylor, see \autoref{thm:BT-continuity}.
	Since $dd^cq$ equals $\vtheta_t-\alpha$, we get
	\begin{equation}\begin{split}
		\label{eq:base-case-split}
		\int	\pi_\ast \big( [\vtheta_t]\wedge \cdots \wedge[\vtheta_1]\wedge T\big)\wedge \beta
		=&  	\int_{X\minus U}	\hspace{-1em}\vtheta_t\wedge \cdots \wedge\vtheta_1\wedge T\wedge \pi^\ast\tilde\beta\\
		&+	\int_{U}	\pi_\ast \big( [\vtheta_{t-1}]\wedge \cdots \wedge[\vtheta_1]\wedge\alpha\wedge T\big)\wedge \beta.
	\end{split}\end{equation}
	Repeating exactly the same arguments in the lines above replacing $\vtheta_i$ by $\vtheta_{i,\kappa}$ and $q$ by $q_\kappa$, we obtain the same formula for all $\kappa$:
	\begin{equation}\label{eq:base-case-split+}
	\begin{split}
		\int	\pi_\ast \big( [\vtheta_{t,\kappa}]\wedge \cdots \wedge[\vtheta_{1,\kappa}]\wedge T\big)\wedge \beta
		=&  	\int_{X\minus U}	\hspace{-1em} \vtheta_{t,\kappa}\wedge \cdots \wedge\vtheta_{1,\kappa}\wedge T\wedge \pi^\ast\tilde\beta\\
		&+	\int_{U} \pi_\ast \big( [\vtheta_{t-1,\kappa}]\wedge \cdots \wedge[\vtheta_{1,\kappa}]\wedge\alpha\wedge T\big)\wedge \beta.
	\end{split}
	\end{equation}

	By \autoref{thm:BT-continuity}, we get that the first term on the RHS in \eqref{eq:base-case-split+} converges to the first one on the RHS in \eqref{eq:base-case-split}.
	We obtain the same for the second term  %
	as we may assume that the proposition is already proven for $(t{-}1)$-factors (\ie by induction over $t$) since $\alpha$ is the difference of two closed positive forms.
	Hence, we get
	\begin{equation*}
		\int	\pi_\ast \big( [\vtheta_t]\wedge \cdots \wedge[\vtheta_1]\wedge T\big)\wedge \beta
		= \lim_{\kappa\rightarrow\infty}	\int	\pi_\ast \big( [\vtheta_{t,\kappa}]\wedge \cdots \wedge[\vtheta_{1,\kappa}]\wedge T\big)\wedge \beta.
	\end{equation*}
	\EoPfwEq
\end{proof}

Using the idea of \cite{LRSW} to define the wedge product of pushforwards currents with the fibre product,  %
\autoref{thm:extBTD-continuity} implies a monotone continuity result for the recursively defined mixed products:

\begin{cor}\label{cr:extBTD-continuity-special}
	For $i=1\kdots t$,
	let $\pi_i\colon X_i\rightarrow Y$ be proper holomorphic submersions between complex manifolds $X_i$ and $Y$ with $m_i$-dimensional fibres,
	let $\theta_i$ be closed $\gamma_i$-positive $(1,1)$-currents on $X_i$ for (positive) $(1,1)$-forms $\gamma_i$ on $X_i$
	such that for any small enough open $V\subset Y$,
	there are \emph{closed} positive $(1,1)$-forms $\gamma_{i,+}$ and $\alpha_{i,\pm}$ on $\pi_i^{-1}(V)$
	with $\gamma_i\leq\gamma_{i,+}$ and $\alpha_{i,+}-\alpha_{i,-}\in\{\theta_i\}_{\BC}$ 
	(for example, $\pi_i$ are Kähler).
	Let $\theta_{i,\kappa}\in\{\theta_i\}_{\BC}$ be closed $\gamma_i$-positive $(1,1)$-currents decreasing pointwise to $\theta_i$,
	let $T$ be a closed pos $(p,p)$-current on $X_1$, and let $k_i$ be positive integers.

	If $\bigcup_{i=1}^t \pi_i(L(\theta_i))$ is contained in an analytic $A$ with $\codim A\geq k_1+\ldots+k_t-m_1-\ldots-m_t+p$, then
	\begin{equation}
	\label{eq:one-step-limit}
		\pi_{t,\ast} \big( [\theta_t]^{k_t}\big)\wedge \cdots \wedge\pi_{1,\ast}\big([\theta_1]^{k_1}\wedge T\big)
		= \lim_{\kappa\rightarrow\infty} \pi_{t,\ast} \big([\theta_{t,\kappa}]^{k_t}\big)\wedge \cdots \wedge\pi_{1,\ast} \big([\theta_{1,\kappa}]^{k_1}\wedge T\big).
	\end{equation}
\end{cor}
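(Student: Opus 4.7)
The plan is to reduce the corollary to the single-submersion case handled by \autoref{thm:extBTD-continuity} by passing to the fibre product. Form $\tilde X := X_t \times_Y \cdots \times_Y X_1 \xrightarrow{\varpi} Y$ with natural projections $\pr_i\colon \tilde X \to X_i$. Then $\varpi$ is a proper holomorphic submersion of fibre dimension $M := m_1 + \cdots + m_t$, and each $\pr_i$ is itself a submersion, so pullbacks of closed (quasi)positive currents through $\pr_i$ are well defined. \autoref{lm:fibre-products-and-direct-images} (applied repeatedly to the recursively nested pushforwards) identifies each side of \eqref{eq:one-step-limit} with $\varpi_*$ of a product on $\tilde X$, so the corollary follows once monotone continuity is proven upstairs on $\tilde X$ for the single map $\varpi$.

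Set $\tilde\theta_i := \pr_i^*\theta_i$, $\tilde\theta_{i,\kappa} := \pr_i^*\theta_{i,\kappa}$, and $\tilde T := \pr_1^* T$. Since $\pr_i$ is a submersion, locally writing $\theta_i = \alpha_i + dd^c q_i$ with $q_i$ $(\gamma_i{+}\alpha_i)$-psh and $\theta_{i,\kappa} = \alpha_i + dd^c q_{i,\kappa}$ with $q_{i,\kappa} \searrow q_i$, we obtain $\tilde\theta_i = \pr_i^*\alpha_i + dd^c(\pr_i^* q_i)$ and $\tilde\theta_{i,\kappa} = \pr_i^*\alpha_i + dd^c(\pr_i^* q_{i,\kappa})$ with $\pr_i^* q_{i,\kappa} \searrow \pr_i^* q_i$ pointwise; in particular $\tilde\theta_{i,\kappa}$ decreases pointwise to $\tilde\theta_i$. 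Each $\tilde\theta_i$ is $\tilde\gamma$-positive for $\tilde\gamma := \sum_j \pr_j^*\gamma_j$, and on $\varpi^{-1}(V) = \pr_i^{-1}(\pi_i^{-1}(V))$ the forms $\tilde\gamma_+ := \sum_j \pr_j^*\gamma_{j,+}$ and $\pr_i^*\alpha_{i,\pm}$ are closed positive, with $\tilde\gamma \leq \tilde\gamma_+$ and $\pr_i^*\alpha_{i,+} - \pr_i^*\alpha_{i,-} \in \{\tilde\theta_i\}_{\BC}$. Thus all structural assumptions of \autoref{thm:extBTD-continuity} for $\varpi$ are met. For the codimension hypothesis, the factorization $\varpi = \pi_i \circ \pr_i$ gives $L(\tilde\theta_i) \subset \pr_i^{-1}(L(\theta_i))$, hence $\varpi(L(\tilde\theta_i)) \subset \pi_i(L(\theta_i))$, so $\bigcup_i \varpi(L(\tilde\theta_i)) \subset A$ with
$$ \codim A \geq k_1 + \cdots + k_t - M + p, $$
which is precisely the bound required by \autoref{thm:extBTD-continuity} upstairs, since the fibre dimension of $\varpi$ is $M$ and $\tilde T$ has bidegree $(p,p)$.

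With these ingredients, \autoref{thm:extBTD-continuity} yields
$$ \varpi_*\bigl([\tilde\theta_t]^{k_t} \wedge \cdots \wedge [\tilde\theta_1]^{k_1} \wedge \tilde T\bigr) = \lim_{\kappa \to \infty} \varpi_*\bigl([\tilde\theta_{t,\kappa}]^{k_t} \wedge \cdots \wedge [\tilde\theta_{1,\kappa}]^{k_1} \wedge \tilde T\bigr), $$
and pushing this equality down to the iterated-wedge form via \autoref{lm:fibre-products-and-direct-images} on each side gives \eqref{eq:one-step-limit}. The main technical obstacle is the bookkeeping in the previous paragraph: one must check that the decomposition $\theta_i = (\theta_i + \gamma_{i,+}) - \gamma_{i,+}$ used in the quasipositive extension of \BTD stays compatible under pullback by $\pr_i$ and under the recursive use of \autoref{lm:fibre-products-and-direct-images}, so that the pullback of the single-submersion identity on $\tilde X$ matches, factor by factor, the mixed recursively-defined pushforward wedge product written in the corollary. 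Everything else is a direct translation of the hypotheses through $\varpi$.
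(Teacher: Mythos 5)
Your proposal follows the paper's proof exactly: pass to the fibre product $\tilde X$, pull everything back, verify the hypotheses of \autoref{thm:extBTD-continuity} for $\varpi$ (with $\tilde\gamma := \sum_j \pr_j^*\gamma_j$, $\tilde\gamma_+ := \sum_j \pr_j^*\gamma_{j,+}$, and the codimension condition passing through since $\varpi(L(\tilde\theta_i))\subset\pi_i(L(\theta_i))$), apply the theorem upstairs, and translate back. The one place you hand-wave and the paper does not is the identification of each side of \eqref{eq:one-step-limit} with a $\varpi_*$-product: \autoref{lm:fibre-products-and-direct-images} is stated for \emph{forms}, so the paper first unwinds the recursive definition into nested limits of smooth $\gamma_i$-positive forms $\theta_i^{(\lambda_i)}$, applies the lemma at the level of forms, and then re-packages via \autoref{pr:extBTD-def}; you correctly flag this ``bookkeeping'' as the technical obstacle, but a complete write-up should spell out that chain of nested limits rather than apply the lemma directly to the recursively defined currents.
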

\begin{proof}
	Let $\varpi\colon\tilde X\rightarrow Y$ be the fibre product of $\tilde X=X_t\times_Y\cdots\times_YX_1$,
	let $\pr_i\colon \tilde X\rightarrow X_i$ be its projection on the $i$th component, set $\tilde\theta_i:=\pr_i^\ast\theta_i$, set $\tilde\theta_{i,\kappa}:=\pr_i^\ast\theta_{i,\kappa}$, and set $\gamma:=\sum_i\pr_i^\ast\gamma_i$ (such that $\gamma\leq\sum_i\pr_i^\ast\gamma_{i,+}$ on $\varpi^{-1}(V)$).
	Let $\theta_i^{(\lambda)}$ be $\gamma_i$-pos $(1,1)$-forms decreasing pointwise to $\theta_i$, and set $\tilde\theta_i^{(\lambda)}:=\pr_i^\ast \theta_i^{(\lambda)}$.
	As recursively defined by \autoref{pr:extBTD-def}
	(using the projection formula, as well),  we get %
	\begin{align}%
		\nonumber
		\pi_{t,\ast} \big( [\theta_t]^{k_t}\big)\wedge \cdots \wedge\pi_{1,\ast}\big([\theta_1]^{k_1}\wedge T\big)
		\hspace{-5em}&\\
		\label{eq:fibre-power-for-extBTD}
		&\overset{\Def}{=}
		\lim_{\lambda_t\rightarrow 0 }\cdots\lim_{\lambda_1\rightarrow 0 }
		\pi_{t,\ast} \big( (\theta_t^{(\lambda_t)})^{k_t}\big)\wedge \cdots \wedge\pi_{1,\ast}\big((\theta_1^{(\lambda_1)})^{k_1}\wedge T\big)\\
		\nonumber		
		&\,=
		\lim_{\lambda_t\rightarrow 0 }\cdots\lim_{\lambda_1\rightarrow 0 }
		\varpi_\ast \big( (\tilde\theta_{t}^{(\lambda_t)})^{k_t}\wedge \cdots \wedge(\tilde\theta_{1}^{(\lambda_1)})^{k_1}\wedge\pr_1^\ast T\big)\\
		&\overset{\Def}{=}
		\varpi_\ast \big( [\tilde\theta_{t}]^{k_t}\wedge \cdots \wedge[\tilde\theta_{1}]^{k_1}\wedge\pr_1^\ast T\big)
		\nonumber
	\end{align}%
	where the second equation follows from \autoref{lm:fibre-products-and-direct-images}.
	Repeating these arguments for $\theta_{i,\kappa}$ instead of $\theta_i$, we obtain also 
	\begin{equation}\label{eq:fibre-power-for-extBTD-eps}
		\pi_{t,\ast} \big( [\theta_{t,\kappa}]^{k_t}\big)\wedge \cdots \wedge\pi_{1,\ast}\big([\theta_{1,\kappa}]^{k_1}\wedge T\big)
		= \varpi_\ast \big( [\tilde\theta_{t,\kappa}]^{k_t}\wedge \cdots \wedge[\tilde\theta_{1,\kappa}]^{k_1}\wedge\pr_1^\ast T\big)
	\end{equation}
	for all $\kappa$. By \autoref{thm:extBTD-continuity}, the RHS of \eqref{eq:fibre-power-for-extBTD-eps} converges weakly to the right side of \eqref{eq:fibre-power-for-extBTD} as $\kappa\rightarrow\infty$. Hence, the same applies to the LHS of the equations what proves the claimed. %
\end{proof}

\section{Currents with analytic singularities}
\label{sc:currents-analy-sing}
\noindent
Throughout this section, let $X$ be a complex manifold of dimension \dimX. 
The main objective of this section is the study of wedge products of currents with analytic singularities. %

\begin{df}\label{df:QPshAS}
	We call $q$ a \emph{quasipsh function with analytic singularities} in $Z$ if
	locally $q=u+\smth$ whereby $u=c\log\| F\|^2+b$ is a psh function with analytic singularities in the (reduced) analytic set $\{F=0\}$ for a positive constant $c$, a tuple $F$ of holomorphic functions and bounded $b$.
	Moreover, we call $q$ a quasipsh function with \emph{neat} analytic singularities %
	if $b$ can be chosen to be smooth, \ie locally $q=c\log\| F\|^2+\smth$.

	We say that a closed positive $\theta$ has (\emph{neat}) \emph{analytic singularities}  when its local \ddc-potential is psh with (neat) analytic singularities.
	$\theta$ is called \emph{quasipositive} (shortly \emph{quasipos}) with (neat) analytic singularities %
	if it is locally the sum of a closed positive current with (neat) analytic singularities and a closed real $(1,1)$-form.
	This is equivalent to that locally $\theta=dd^c q$ for a quasipsh function $q$ with (neat) analytic singularities. %
\end{df}

Let $u_1\kdots u_k$ be psh functions with analytic singularities in $Z_1\kdots Z_k$, resp.
For constructible sets $W_2\kdots W_k$ such that $W_j\subset Z_j^c:=X\minus Z_j$, we can define the MA product
\begin{equation}\label{eq:general-AW-product}
	dd^c u_k\wedge\one_{W_k} dd^c u_{k-1}\wedge \one_{W_{k-1}}\cdots dd^c u_2\wedge \one_{W_2} dd^c u_1
\end{equation}
recursively (from right to left) as a closed positive current, see \cite{LRSW}*{Prop.~3.2}.
In case of $W_j=X\minus Z_j$, we call \eqref{eq:general-AW-product} the (mixed) \emph{Andersson-Wulcan (AW) MA product} of $dd^c u_k\kdots dd^c u_1$.
For a psh function $u$ with analytic singularities in $Z$, $(dd^c u)_\AW^k$ denotes the $k$-power $dd^c u\wedge\one_{Z^c} \cdots\one_{Z^c} dd^c u$ which was introduced in \cite{AW14-MA}.

Please note there is no restriction on $k$. Moreover, this definition of (mixed) Monge-Amp\`ere products is an extension of %
\BTD MA, see \cite{LRSW}*{Prop.~3.4}, %
and (mixed) products of Bochner-Martinelli currents $dd^c\log\Vert F_k\Vert^2$ for holomorphic tuples $F_k$ as it was introduced in \cite[Sec.~5]{ASWY17}. %
Also, we would like to point out that \eqref{eq:general-AW-product}  %
can be defined for quasipsh potentials, as well, see \cites{Blocki19-quasiMA,LSW} and also \cite{LRSW}*{Lem.~3.5}
(\ip for $\gamma$-psh potentials for a (pos) form  $\gamma$).

\bigskip

As in \cite{LRSW}*{Sec.~3} and \cite{LSW}*{Sec.~2}, we consider the following class of currents.

\begin{df}\label{df:currents-w-analy-sing}
We call a (closed) real $(p,p)$-current $T$ a (closed) \emph{current with analytic singularities} if it is locally of the following form:
\begin{equation}\label{eq:local-form-of-currents-with-as}
	\sum{\big.\!}_i\, \alpha_i\wedge \one_{W_{i,k_i+1}}dd^c u_{i,k_i}\wedge  \one_{W_{i,k_i}}dd^c u_{i,k_i-1}\one_{W_{i,k_i-1}} \cdots\wedge \one_{W_{i,2}}dd^c u_{i,1} \one_{W_{i,1}}
\end{equation}
with smooth closed real forms $\alpha_i$ of degree $(p{-}k_i,p{-}k_i)$, psh $u_{i,j}$ with analytic singularities, and constructible sets $W_{i,j}$ such that $L(u_{i,j})\subset W_{i,j}^c$ for $j\leq 2$.
As $\one_{\{\pt\}}T=0$ for $(p,p)$-currents with $p<n$, we can omit the last condition on $W_{i,j}$ if $L(u_{i,j})$ is a set of isolated points.

If furthermore $L(u_{i,1})\subset W_{i,1}^c$, $T\wedge\cdot$ denotes an operator  on closed currents with analytic singularities %
(which is given recursively and well-defined due to the condition on $W_{i,1}$). %
\end{df}
In particular, the AW MA product of closed quasipositive $(1,1)$-currents with analytic singularities (see \eqref{eq:general-AW-product}) is a closed current with analytic singularities. Let us use the following more specific notation for such currents. %
\begin{df}\label{df:qpos-currents-w-analy-sing}
	A closed $(p,p)$-current $T$ with analytic singularities is called \emph{quasipositive} (shortly \emph{quasipos}) if $T$ is locally of the following form:
	\begin{equation}\label{eq:local-form-of-qpos-currents-with-as}
		\sum{\big.\!}_i\, c_i\one_{W_{i,p+1}}dd^c q_{i,p}\wedge  \one_{W_{i,p}}dd^c q_{i,p-1}\one_{W_{i,p-1}} \cdots\wedge \one_{W_{i,2}}dd^c q_{i,1} \one_{W_{i,1}}
	\end{equation}
	with positive constants $c_i$, quasipsh $q_{i,j}$ with analytic singularities, and constructible sets $W_{i,j}$ such that $L(q_{i,j})\subset W_{i,j}^c$ for $j\geq 2$.

	Obviously, a closed $(p,p)$-current $T$ with analytic singularities is quasipos if and only if it is locally of the form as in \eqref{eq:local-form-of-currents-with-as} with $\alpha_i$ is either a positive constant or the wedge product of closed real $(1,1)$-forms.
	In particular, locally $T$ is the difference of two closed positive currents.
\end{df}

\begin{rem}\label{rk:PFofQposCurAS}
	Let $\pi\colon X\rightarrow Y$ be a proper holomorphic submersion, and
	let $\gamma$ be a (pos) $(1,1)$-form such that for all small enough open $V\subset Y$, there is a \emph{closed} positive $(1,1)$-form $\gamma_+$ with $\gamma_+\geq\gamma$ on $U=\pi^{-1}(V)$, \eg $\pi$ is Kähler, see \autoref{rk:Kaehler-pi}.
	Let $\theta_1\kdots\theta_p$ be closed $\gamma$-pos $(1,1)$ currents with analytic singularities,
	and let $W_2\kdots W_{p}$ be constructible sets with $L(\theta_j)\subset W_{j}^c$ for $j\geq 2$.
	Then, %
	$T=\theta_{p}\wedge  \one_{W_{p}}\cdots \theta_2\wedge \one_{W_{2}}\theta_{1}$ %
	can be decomposed into the difference $T_+-T_-$ of two closed positive currents $T_\pm$ with analytic singularities
	on $U=\pi^{-1}(V)$ for all small enough $V$ (to be more precise, on all $U$ on which the $\gamma_+$ from above exists).

	Let us prove this by induction over $p$.
	The base case $p=1$ is trivial.
	For the induction step, we may assume that there are two closed positive currents $S_\pm$ with analytic singularities such that
	$\theta_{p-1}\wedge  \one_{W_{p-1}}\cdots \theta_2\wedge \one_{W_{2}}\theta_{1}=S_+-S_-$.
	Then,
	\begin{equation*}
		T\!=\!\theta_{p}\wedge  \one_{W_{p}}(S_+-S_-)
		\!=\!\big((\theta_{p}{+}\gamma_+)\wedge \one_{W_{p}}S_+ + \gamma_+\wedge \one_{W_{p}}S_-\big) 
		- \big((\theta_{p}{+}\gamma_+)\wedge \one_{W_{p}}S_- + \gamma_+\wedge \one_{W_{p}}S_+\big)
	\end{equation*}
	where minuend and subtrahend are closed and positive.	
\end{rem}

\medskip

We obtain two classes of currents which are closed under the following operations:
\\\ManualItem{1} wedge products with closed real forms (of bidegree $(1,1)$);
\\\ManualItem{2} $\one_W$ for any constructible set $W$; and
\\\ManualItem{3} $dd^c q \wedge \one_{Z^c}$ for any quasipsh function $q$ with analytic singularities in $Z$.
\\
If $T$ and $S$ are (quasipos) currents with analytic singularities, then $S\wedge T$ is well-defined and a (quasipos) current with analytic singularities where $S\wedge$ denotes the operator as defined in \autoref{df:currents-w-analy-sing}.

In this sense, the AW MA product has proven to be very useful, \eg we can define the Monge-Ampère operator for arbitrary degrees.
Yet, it lacks certain properties. For example, the product is neither commutative nor distributive, see \cite{LRSW}*{Exp.~3.1}.
Furthermore, although it coincides with the \BTD MA product whenever this defined (see \cite{LRSW}*{Prop.~3.4}), the pushforward of an AW MA product is in general different form the (natural) extension of \BTD defined by \autoref{pr:extBTD-def} as the following example shows; \cf \cite{LRSW}*{Exp.~8.3}.

\begin{exa}
	Let $X$ be $\CC_{z}{\times}\PP^1$, let $\pi$ be the projection on $Y=\CC_{z}$, %
	let $u$ denote the psh function $\log |z|^2$ on $X$, and let $\omega_\FS$ be the pullback of the Fubini-Study form on $\PP^1$.
	Then, $\pi_\ast\big((\omega_\FS{+}dd^c u)_\AW^{2}\big)= \pi_\ast\big(\omega_\FS\wedge [z{=}0]\big) = [z{=}0]$. %
	On the other side,
	if $u_\kappa$ is a sequence of smooth psh functions decreasing pointwise to $u$, then
	$(\omega_\FS{+}dd^c u_\kappa)^2= (dd^c u_\kappa)^2 + 2\omega_\FS\wedge dd^c u_\kappa+0$
	such that $\pi_\ast \big((\omega_\FS+dd^c u_\kappa)^2\big)=2\pi_\ast \big(\omega_\FS\wedge dd^c u_\kappa)$
	which converges to $2[z{=}0]$, \ie
	\begin{equation*}
		\pi_\ast\big([\omega_\FS{+}dd^c u]^{2}\big)=2[z{=}0]\neq \pi_\ast\big((\omega_\FS{+}dd^c u)_\AW^{2}\big).
	\end{equation*}
\end{exa}

Roughly speaking, the AW MA product removes too much. Inspired by \cite{ABW}*{Thm~1.2}, Lärkäng, Raufi, Wulcan and the author give the following definition in \cites{LRSW, LSW} which keeps track of the removed part by adding an extra term to the AW MA product. 

\begin{df}\label{def:genMA}
	For a closed quasipositive $(1,1)$-current $\theta$ with analytic singularities in $Z$, closed real $(1,1)$-form $\alpha$, and closed current $T$ with analytic singularities,
	we get that the so-called \emph{generalized Monge-Ampère product} of $\theta$ and $T$ defined by
	\begin{equation}\label{eq:defgMA}
		[\theta]_\alpha\wedge T := \theta\wedge\one_{Z^c} T+\alpha\wedge\one_{Z} T
	\end{equation}
	is a well-defined closed real current with analytic singularities which is quasipos when $T$ is.
	This is also defined and a (quasipos) current with analytic singularities for $\alpha$ replaced by a quasipos $(1,1)$-current $\eta$ with analytic singularities in isolated points due to \BTD.
\end{df}

Generalizing \cite{LRSW}*{Thm~1.2}, we obtain the following correlation between the extended \BTD MA product defined by \autoref{pr:extBTD-def} and pushforwards of the generalized MA products defined by  \eqref{eq:defgMA}.

\begin{thm}\label{thm:extBTD-coincides-genMA}
	Let $\pi\colon X\rightarrow Y$ be a proper holomorphic submersion between complex manifolds with \dimF-dimensional fibres, %
	let $\gamma$ be a (pos) $(1,1)$-form, 
	and let $T$ be a closed real $(p,p)$-current
	such that for any small enough $V\subset Y$,
	there are a closed positive form $\gamma_+$ with $\gamma\leq\gamma_+$ and closed positive currents $T_{\pm}$ with analytic singularities and $T=T_+-T_-$ on $\pi^{-1}(V)$ (\cf \autoref{rk:PFofQposCurAS}).
	Let $\theta$ be a $\gamma$-pos $(1,1)$-current with analytic singularities in $Z$,
	and let  $\eta\in\{\theta\}_{\BC}$ be a closed $\gamma$-pos $(1,1)$-current with neat analytic singularities in isolated points.
	Then, we get 
	\begin{equation*}
		\pi_\ast \big([\theta]^k\wedge T) = \pi_\ast \big([\theta]_\eta^k\wedge T)
	\end{equation*}
	for all $k+p\leq\codim \pi(Z)+m$.
	In particular, it is independent of $\eta$.
\end{thm}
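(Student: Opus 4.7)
The plan is to reduce to the smooth case already handled in the predecessor \cite{LRSW}*{Thm~1.2} (the same statement with $\eta$ replaced by a smooth closed real $(1,1)$-form $\alpha \in \{\theta\}_{\BC}$) and then prove the extra comparison $\pi_\ast([\theta]_\eta^k \wedge T) = \pi_\ast([\theta]_\alpha^k \wedge T)$ directly. Since the statement is local on $Y$ and linear in $T$, I first shrink $Y$ so that the decomposition $T = T_+ - T_-$ holds on $U = \pi^{-1}(V)$, and then work with each summand separately; thus I may assume $T$ is a closed positive current with analytic singularities on $U$.

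\textbf{Key expansion.} By induction on $k$ using the recursion $[\theta]_\eta \wedge S = \theta \wedge \one_{Z^c} S + \eta \wedge \one_Z S$, one verifies the clean splitting
\begin{equation*}
	[\theta]_\eta^k \wedge T = (\theta \wedge \one_{Z^c})^k\, T + \eta^k \wedge \one_Z T,
\end{equation*}
and the analogous identity with $\alpha$ in place of $\eta$. The point is that every current built by iterated $\theta \wedge \one_{Z^c}$ puts no mass on $Z$, while every current built by iterated $\eta \wedge \one_Z$ is supported in $Z$; hence all mixed terms cancel in the recursion. Subtracting the two identities yields
\begin{equation*}
	[\theta]_\eta^k \wedge T - [\theta]_\alpha^k \wedge T = \bigl(\eta^k - \alpha^k\bigr) \wedge \one_Z T.
\end{equation*}

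\textbf{Bump form vanishing and main obstacle.} Writing $\eta - \alpha = dd^c h$ for a quasipsh function $h$ on $U$ with neat analytic singularities in isolated points, the binomial expansion reduces the task to showing
\begin{equation*}
	\pi_\ast \bigl( (dd^c h)^j \wedge \alpha^{k-j} \wedge \one_Z T \bigr) = 0, \quad 1 \leq j \leq k.
\end{equation*}
The LHS is a closed real current on $Y$ of bidegree $(k{+}p{-}m, k{+}p{-}m)$ supported in $\pi(Z)$, which has codimension $\geq k{+}p{-}m$; by \autoref{lm:Unique} applied to its local positive and negative parts (\cf \autoref{rk:PFofQposCurAS}) it suffices to check the vanishing after pairing with every sufficiently small bump form $\beta$ at any $y \in \pi(Z)$ whose support avoids an analytic $A \supset \pi(Z)$ of codimension $\geq k{+}p{-}m$. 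Approximating $h$ by smooth $h_\lambda \searrow h$, the closedness of $\alpha^{k-j}$, of $(dd^c h_\lambda)^{j-1}$, and of $\one_Z T$ (the last by Skoda--El~Mir, since $T$ is closed positive with analytic singularities) allows integration by parts:
\begin{equation*}
	\int (dd^c h_\lambda)^j \wedge \alpha^{k-j} \wedge \one_Z T \wedge \pi^\ast \beta = \int h_\lambda \cdot (dd^c h_\lambda)^{j-1} \wedge \alpha^{k-j} \wedge \one_Z T \wedge \pi^\ast dd^c \beta = 0,
\end{equation*}
the last equality holding because $\supp \pi^\ast dd^c \beta \subset \pi^{-1}(\supp dd^c \beta)$ is disjoint from $Z \subset \pi^{-1}(\pi(Z))$ while $\one_Z T$ is supported in $Z$. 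Passing to the limit $\lambda \to \infty$ via BTD's monotone continuity (valid since $L(h)$ is discrete) preserves the vanishing. The main obstacle is the clean splitting in the preceding paragraph: one must treat the operators $\one_{Z^c}$ and $\one_Z$ on currents with analytic singularities carefully, in particular verifying that the iterated $\eta \wedge \one_Z$-pieces really do stay supported in $Z$ despite the isolated singularities of $\eta$ contributing via BTD. Once this is settled, the final calculation is a direct adaptation of the bump form argument from the proof of \cite{LRSW}*{Thm~1.2} to the closed positive current $\one_Z T$, completing the proof.
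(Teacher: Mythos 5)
Your \emph{key expansion}
\begin{equation*}
	[\theta]_\eta^k \wedge T = (\theta \wedge \one_{Z^c})^k\, T + \eta^k \wedge \one_Z T
\end{equation*}
is false for $k\geq 2$, and the rest of the argument rests on it. The claim that ``every current built by iterated $\theta\wedge\one_{Z^c}$ puts no mass on $Z$'' is precisely the opposite of what happens with the Andersson--Wulcan operator. Take $q=\log|f|^2$, $Z=\{f=0\}$, $T=1$; then $\theta\wedge\one_{Z^c}T=dd^cq\wedge\one_{Z^c}T=[f=0]$, which lives entirely on $Z$, so $\one_Z(\theta\wedge\one_{Z^c}T)\neq 0$. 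More generally, the entire point of the AW construction is that $dd^cq\wedge\one_{Z^c}(\cdot)$ produces King-type currents concentrated on $Z$. Consequently, when you unfold the recursion at the step $k=2$,
\begin{equation*}
	[\theta]_\eta^2\wedge T = \theta\wedge\one_{Z^c}([\theta]_\eta\wedge T) + \eta\wedge\one_Z([\theta]_\eta\wedge T),
\end{equation*}
the second term contains $\eta\wedge\one_Z(\theta\wedge\one_{Z^c}T)$, which does not cancel against anything. A direct computation gives
\begin{equation*}
	[\theta]_\eta^2\wedge T - [\theta]_\alpha^2\wedge T = (\eta-\alpha)\wedge\one_Z(\theta\wedge\one_{Z^c}T) + (\eta^2-\alpha^2)\wedge\one_Z T,
\end{equation*}
so your claimed difference $(\eta^k-\alpha^k)\wedge\one_Z T$ is missing terms involving iterated $\theta\wedge\one_{Z^c}$-currents on $Z$. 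As a result, your reduction to the vanishing of $\pi_\ast\big((dd^ch)^j\wedge\alpha^{k-j}\wedge\one_Z T\big)$ is incomplete; you never address the extra pieces.

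The overall strategy --- localize, use a bump form and \autoref{lm:bump}/\autoref{lm:Unique} on the difference, which is indeed supported in $Z$ --- is not unreasonable (and is how \cite{LRSW}*{Thm~1.1~(3)} argues), but you would first need to identify the actual difference correctly, which is more involved than your splitting suggests. The paper proves the theorem by a different route entirely: writing $\theta=dd^cq+\alpha$ and $\eta=dd^cv+\alpha$ with $\alpha$ smooth, it forms the barrier approximations $q_\kappa:=\max\{q,v-\kappa\}$ (quasipsh with neat analytic singularities in isolated points, decreasing to $q$), sets $\theta_\kappa:=dd^cq_\kappa+\alpha$, and invokes \autoref{lm:gMA-continuity-singular} to obtain $[\theta]_\eta\wedge T_\pm=\lim_\kappa\theta_\kappa\wedge T_\pm$; taking pushforwards via \autoref{pr:extBTD-def} and \autoref{rk:extBTDsimplCont} then gives the claim, with the smooth case and independence of $\eta$ coming along for free because the same $\alpha$-potential $q$ is being approximated. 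You should either adopt that approximation route, or repair the expansion so that the bump-form argument really covers all the terms supported on $Z$.
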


Let us point out in contrast to the general case,
$S:=\pi_\ast\big([\theta]^k\wedge T)$ is actually given as a direct image of a current on $X$ for $\theta$ with analytic singularities. We will utilize this fact to estimate the Lelong numbers of $S$, see \autoref{cr:Lelong-estimate-pushforwards-AW-products}.

\begin{rem}
	By \autoref{rk:PFofQposCurAS}, we can apply \autoref{thm:extBTD-coincides-genMA} recursively. In particular, we get  %
	\begin{equation}\label{eq:extBTD-gMA-I}
		\pi_\ast \big([\theta_t]^{k_t}\wedge \cdots \wedge [\theta_1]^{k_1} \big)=\pi_\ast \big([\theta_t]_{\eta_t}^{k_t}\wedge \cdots \wedge [\theta_1]_{\eta_1}^{k_1} \big)
	\end{equation}
	for closed $\gamma$-pos $(1,1)$-currents $\theta_1\kdots\theta_t$ with analytic singularities in $Z_i$,
	closed $\gamma$-pos $(1,1)$-currents $\eta_1\kdots\eta_t$ with neat analytic singularities in isolated points such that $\eta_i\in\{\theta_i\}_{\BC}$ and 
	$k_1+\ldots+k_t\leq\codim \bigcup_{i=1}^t\pi(Z_i)+m$
	($\gamma$ as above).
\end{rem}

\begin{rem}\label{rk:gMAwFP}
	Using the idea from \cite{LRSW} (motivated by \autoref{lm:fibre-products-and-direct-images}),
	we can use the fibre product to define the wedge product of pushforwards of currents with analytic singularities in the following setting.
	For $i=1\kdots t$,
	let $\pi_i\colon X_i\rightarrow Y$ be proper holomorphic submersions with $m_i$-dimensional fibres, %
	let $\theta_i$ be closed $\gamma_i$-pos $(1,1)$-currents with analytic singularities in $Z_i$, %
	and let $\alpha_i\in\{\theta_i\}_{\BC}$ be closed $\gamma_i$-pos $(1,1)$-forms where 
	$\gamma_i$ are (pos) $(1,1)$-forms, each on $X_i$.
	Following \cite{LRSW}*{Thm~1.1}, we define
	\begin{equation}\label{eq:gMA-mixedPF}	%
		\pi_{t,\ast} \big( [\theta_t]_{\alpha_t}^{k_t}\big)\wedge \cdots \wedge\pi_{1,\ast}\big([\theta_1]_{\alpha_1}^{k_1}\big):=\varpi_\ast \big([\tilde\theta_t]_{\tilde\alpha_t}^{k_t}\wedge \cdots \wedge [\tilde\theta_1]_{\tilde\alpha_1}^{k_1} \big)
	\end{equation}
	where $\varpi\colon\tilde X\rightarrow Y$ is the fibre product, $\tilde X=X_t\times_Y\cdots\times_YX_1$, $\pr_i\colon \tilde X\rightarrow X_i$ denotes the projection on the $i$th component, $\tilde\theta_i:=\pr_i^\ast\theta_i$ and $\tilde\alpha_i:=\pr_i^\ast\alpha_i$.
	In \eqref{eq:gMA-mixedPF}, we cannot replace $\alpha_i$ by currents $\eta_i$ with neat analytic singularities in isolated points since $\pr_i^\ast\eta_i$ would have singularities in sets of higher codimension than points.
	If for any small enough open $V\subset Y$,
	there are {closed} positive $(1,1)$-forms $\gamma_{i,+}$ with $\gamma_i\leq\gamma_{i,+}$  on $\pi_i^{-1}(V)$, then \eqref{eq:extBTD-gMA-I} implies
	\begin{equation}	%
		\pi_{t,\ast} \big( [\theta_t]^{k_t}\big)\wedge \cdots \wedge\pi_{1,\ast}\big([\theta_1]^{k_1}\big)
		= 
		\pi_{t,\ast} \big( [\theta_t]_{\alpha_t}^{k_t}\big)\wedge \cdots \wedge\pi_{1,\ast}\big([\theta_1]_{\alpha_1}^{k_1}\big)
	\end{equation}	
	for all  $k_1+\ldots+k_t\leq \codim\bigcup_i{\pi(Z_i)}+m_1+\ldots+m_t$ (\cf proof of \autoref{cr:extBTD-continuity-special}). %
\end{rem}

\begin{proof}[Proof of \autoref{thm:extBTD-coincides-genMA}.]
	Let $\alpha$ be in the same \BC-class as $\theta$ (and $\eta$), \ie there exist a quasipsh $q$ with analytic singularities and a quasipsh $v$ with neat analytic singularities in isolated points such that $\theta=dd^cq+\alpha$ and $\eta=dd^cv+\alpha$.
	By the definition of the generalized MA product \eqref{eq:defgMA},
	we get $[\theta]_{\eta}\wedge T=[dd^cq+\alpha]_{dd^cv+\alpha}\wedge T=\big([dd^cq]_{dd^cv}+\alpha\big)\wedge T$.
	We set $q_\kappa:=\max\{q,v-\kappa\}$. %
	As locally, there is a constant $C$ \st $q+C\|z\|^2$ and $v+C\|z\|^2$ are psh (where $z$ denotes coordinates of $X$), we get $q_\kappa\overset{\loc}{=}\max\{q+C\|z\|^2,v-\kappa+C\|z\|^2\}-C\|z\|^2$ are quasipsh. %
	Moreover, $q_\kappa$ have neat analytic singularities %
	in isolated points and the sequence is decreasing pointwise to $q$, \ip $\theta_\kappa:=dd^cq_\kappa+\alpha\longrightarrow\theta$ weakly as $\kappa\rightarrow\infty$. %
	On any $U:=\pi^{-1}(V)$ for small enough open $V\subset Y$ where $T=T_+-T_-$ by the assumption,
	\autoref{lm:gMA-continuity-singular} below implies
	\begin{equation*}
		[\theta]_{\eta}\wedge T_\pm
		=\big([dd^cq]_{dd^cv}+\alpha\big)\wedge T_\pm
		=\lim_{\kappa\rightarrow\infty} (dd^c q_\kappa+\alpha)\wedge T_\pm
		=\lim_{\kappa\rightarrow\infty} \theta_\kappa\wedge T_\pm.
	\end{equation*}
	By taking the pushforwards of both sides of the equation, we get the claimed as in the proof of
	\autoref{pr:extBTD-def}, see \autoref{rk:extBTDsimplCont}. %
\end{proof}

\begin{lma} \label{lm:gMA-continuity}
	Let $T$ be a closed current with analytic singularities, and
	let $q$ be a quasipsh function with analytic singularities in $Z$.
	If $\rho_\kappa\colon \RR\rightarrow\RR$ is a sequence of non-decreasing convex and bounded from below functions such that $\rho_\kappa$ is decreasing pointwise to the identity as $\kappa\rightarrow\infty$, then
	\begin{equation}\label{eq:AWMA-continuity}
		dd^c \rho_\kappa\circ q  \wedge T
		\overset{\kappa\rightarrow\infty}\longrightarrow
		dd^c q \wedge \one_{Z^c}T.
	\end{equation}
	In particular, if %
	$v$ is a smooth %
	function, then %
	\begin{equation}\label{eq:gMA-continuity}
		[dd^c q]_{dd^c v}\wedge T = \lim_ {\kappa\rightarrow\infty} dd^c \big(\rho_\kappa\circ (q-v)+v\big)  \wedge T.
	\end{equation}
\end{lma}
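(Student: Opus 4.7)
\emph{Step 1: Properties of $Q_\kappa$.} Set $Q_\kappa := \rho_\kappa \circ q$. Since $\rho_\kappa$ is convex non-decreasing and $q$ is quasipsh, the composition $Q_\kappa$ is quasipsh; this is verified by the chain rule on a smooth approximation of $q$ (controlling $\rho_\kappa'\,dd^c q \geq -\rho_\kappa'\gamma$ and $\rho_\kappa''\,dq\wedge d^c q \geq 0$) and passing to the limit. Because $\rho_\kappa$ is bounded from below and $q$ is locally bounded above, $Q_\kappa$ is locally bounded on $X$. Since $\rho_\kappa(t)\searrow t$ pointwise, one has $Q_\kappa\searrow q$ pointwise. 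Consequently, by applying Bedford--Taylor (\autoref{thm:BT-continuity}) recursively through the local expression of $T$ from \autoref{df:currents-w-analy-sing}, each $dd^c Q_\kappa \wedge T$ is a well-defined closed real current.

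\emph{Step 2: Convergence to the AW product.} On $X \setminus Z$, the function $q$ is continuous and $Q_\kappa \searrow q$, so Bedford--Taylor monotone continuity iterated through the factors of $T$ gives the weak convergence of $(dd^c Q_\kappa \wedge T)|_{X \setminus Z}$ to $dd^c q \wedge T|_{X \setminus Z}$, which agrees with $(dd^c q \wedge \one_{Z^c} T)|_{X \setminus Z}$. To conclude the global convergence, I would invoke the monotone continuity of the generalized (Andersson--Wulcan) Monge--Amp\`ere product for locally bounded quasipsh approximants developed in \cite{AW14-MA}, \cite{Blocki19-quasiMA}, and \cite{LSW}: any sequence of locally bounded quasipsh functions decreasing pointwise to $q$---in particular the sequence $Q_\kappa$---produces the AW product $dd^c q \wedge \one_{Z^c} T$ in the weak limit. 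The argument proceeds by testing against bump forms near $Z$ (as in \autoref{rk:BumpFormExists} and \autoref{pr:convergence-of-positive-currents}) and using integration by parts to transfer $dd^c$ off the bounded function $Q_\kappa$, reducing the identification to the classical Bedford--Taylor case away from $Z$ plus control of the mass concentration near $Z$.

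\emph{Step 3: The ``in particular'' statement.} Since $v$ is smooth, $\tilde q := q - v$ is quasipsh with analytic singularities in the same set $Z$. Applying Step 2 to $\tilde q$ gives
\begin{equation*}
	dd^c \rho_\kappa(q-v)\wedge T \;\to\; dd^c(q-v)\wedge \one_{Z^c}T \quad \text{as } \kappa \to \infty.
\end{equation*}
Since $v$ is smooth, $dd^c(\rho_\kappa\circ(q-v)+v)=dd^c\rho_\kappa(q-v)+dd^c v$. Adding $dd^c v\wedge T$ to both sides and using $T=\one_{Z^c}T+\one_Z T$ yields
\begin{equation*}
	dd^c\bigl(\rho_\kappa\circ(q-v)+v\bigr)\wedge T \;\to\; dd^c q\wedge\one_{Z^c}T+dd^c v\wedge\one_Z T \;=\; [dd^c q]_{dd^c v}\wedge T,
\end{equation*}
where the last equality is \autoref{def:genMA}, establishing \eqref{eq:gMA-continuity}.

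The main obstacle is Step 2: identifying the global weak limit of $dd^c Q_\kappa\wedge T$ as the AW product $dd^c q\wedge\one_{Z^c}T$, and thereby confirming that the abstract regularization $\rho_\kappa\circ q$ yields the same limit as the standard AW regularization $c\log(|F|^2+\epsilon^2)+b$. This requires a careful analysis of the mass of $dd^c Q_\kappa$ in a shrinking neighborhood of $Z$ as $\kappa \to \infty$, combined with the recursive structure of $T$ from \autoref{df:currents-w-analy-sing}; once this is in place, the additive and multiplicative manipulations in Steps 1 and 3 are routine.
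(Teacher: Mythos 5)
Your Step 3 is exactly the paper's argument, and your overall structure is right: the whole lemma reduces to establishing the convergence \eqref{eq:AWMA-continuity} for the specific regularization $Q_\kappa=\rho_\kappa\circ q$. The paper dispatches that step in one line by citing \cite{LRSW}*{Prop.~3.2}, which establishes precisely that for convex, non-decreasing, bounded-from-below $\rho_\kappa$ decreasing to the identity, $dd^c(\rho_\kappa\circ q)\wedge T$ converges weakly to the AW product $dd^cq\wedge\one_{Z^c}T$; this is essentially built into the definition of the AW Monge--Amp\`ere product.

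The gap is in your Step 2, which you rightly flag as the crux. You assert that ``any sequence of locally bounded quasipsh functions decreasing pointwise to $q$ produces the AW product $dd^c q\wedge\one_{Z^c}T$ in the weak limit.'' That general claim is false, and the paper's very next lemma (\autoref{lm:gMA-continuity-singular}) demonstrates this: the sequence $q_\kappa=\max\{q,v-\kappa\}$ is locally bounded quasipsh and decreases pointwise to $q$, yet $dd^cq_\kappa\wedge T$ converges to the \emph{generalized} MA product $[dd^cq]_{dd^cv}\wedge T=dd^cq\wedge\one_{Z^c}T+dd^cv\wedge\one_Z T$, which differs from the AW product by the residue term $dd^cv\wedge\one_Z T$. (Indeed, if your general claim were true, the distinction between AW and generalized MA products that this paper is built around would collapse.) What makes the lemma work is not general monotone continuity but the particular structure of $\rho_\kappa\circ q$ as a convex composition --- the exact regularization form for which \cite{LRSW}*{Prop.~3.2} defines the AW product and proves convergence. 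Your Step 1 correctly identifies $Q_\kappa$ as being of this form, so the fix is to invoke that specific definitional fact rather than the nonexistent general principle.
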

\begin{proof}
	\eqref{eq:AWMA-continuity} is a direct consequence of the definition of the AW MA product, see \cite{LRSW}*{Prop.~3.2}. %
	Now, \eqref{eq:AWMA-continuity} implies
	\begin{equation*}
		dd^c \rho_\kappa\circ (q-v)  \wedge T
		\overset{\kappa\rightarrow\infty}\longrightarrow
		dd^c (q-v) \wedge \one_{Z^c}T
	\end{equation*}
	which is equivalent to \eqref{eq:gMA-continuity} since we can add $dd^cv\wedge T$ on both sides of the equation.
\end{proof}

We get the following variant for $v$ with neat analytic singularities in isolated points.
\begin{lma} \label{lm:gMA-continuity-singular}
	Let $T$ be a closed quasipos $(p,p)$-current with analytic singularities, let $q$ be a quasipsh function with analytic singularities in $Z$, and let $v$ be a quasipsh function with neat analytic singularities in isolated points. %
	We set $q_\kappa:=\max\{q,v-\kappa\}=\max\{q-v,-\kappa\}+v$ %
	such that $q_\kappa$ are quasipsh functions with neat analytic singularities %
	in isolated points
	and $q_\kappa$ is decreasing pointwise to $q$. %
	Then,
	\begin{equation}\label{eq:gMA-continuity-singular}
		[dd^c q]_{dd^c v}\wedge T =
		\lim_{\kappa\rightarrow\infty} dd^c q_\kappa\wedge T.
	\end{equation}
\end{lma}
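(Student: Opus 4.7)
The statement is local, so I would work on a small open set where $q=c\log\|F\|^2+b$ with $b$ bounded (so $Z=\{F=0\}$) and $v=c'\log\|G\|^2+b'$ with $b'$ smooth and $L(v)=\{G=0\}$ a finite set. First I would observe that $q_\kappa=\max\{q,v-\kappa\}$ is indeed quasipsh with neat analytic singularities contained in the isolated set $L(v)\cap Z$, so that $dd^c q_\kappa\wedge T$ is well defined by \BTD (\autoref{thm:BTD}) applied to each building block of the local form \eqref{eq:local-form-of-qpos-currents-with-as} of $T$.

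The plan is to reduce the claim to the smooth-$v$ case already handled in \autoref{lm:gMA-continuity}, by regularizing $v$. Using Demailly's approximation (\autoref{thm:Dem-smooth-approximation}), I would pick a decreasing sequence of smooth quasipsh functions $v_\lambda\searrow v$ and set $q_{\kappa,\lambda}:=\max\{q,v_\lambda-\kappa\}$. Since $v_\lambda$ is smooth, $q_{\kappa,\lambda}\geq v_\lambda-\kappa$ is locally bounded below, so $q_{\kappa,\lambda}$ is a locally bounded quasipsh function; moreover $q_{\kappa,\lambda}\searrow q_\kappa$ as $\lambda\to\infty$ for fixed $\kappa$, and $q_{\kappa,\lambda}\searrow q$ as $\kappa\to\infty$ for fixed $\lambda$. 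Applying \autoref{lm:gMA-continuity} with the smooth potential $v_\lambda$ and the convex non-decreasing bounded-below function $\rho_\kappa(t):=\max\{t,-\kappa\}$, and using the identity $q_{\kappa,\lambda}=\rho_\kappa(q-v_\lambda)+v_\lambda$, I obtain
\begin{equation*}
[dd^c q]_{dd^c v_\lambda}\wedge T=\lim_{\kappa\to\infty}dd^c q_{\kappa,\lambda}\wedge T.
\end{equation*}

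To conclude I would pass to the limit $\lambda\to\infty$ in the displayed identity. By \eqref{eq:defgMA} the left-hand side equals $dd^c q\wedge\one_{Z^c}T+dd^c v_\lambda\wedge\one_Z T$; since $v_\lambda\searrow v$ with $L(v)$ isolated, \BTD monotone continuity gives $dd^c v_\lambda\wedge\one_Z T\to dd^c v\wedge\one_Z T$, so the left-hand side converges to $[dd^c q]_{dd^c v}\wedge T$. For each fixed $\kappa$, \BTD applied to $q_{\kappa,\lambda}\searrow q_\kappa$ (with $L(q_\kappa)\subset L(v)$ isolated) gives $dd^c q_{\kappa,\lambda}\wedge T\to dd^c q_\kappa\wedge T$. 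The hard part will be interchanging the two limits $\kappa\to\infty$ and $\lambda\to\infty$: I plan to handle this by a diagonal extraction, choosing $\lambda(\kappa)\to\infty$ fast enough so that $\langle dd^c q_{\kappa,\lambda(\kappa)}\wedge T-dd^c q_\kappa\wedge T,\phi\rangle\to 0$ for every test form $\phi$; then $q_{\kappa,\lambda(\kappa)}\searrow q$ jointly, and combining the diagonal version of the displayed identity with the left-hand side convergence identifies $\lim_\kappa dd^c q_\kappa\wedge T$ with $[dd^c q]_{dd^c v}\wedge T$. The local uniform boundedness of the trace measures of the differences $dd^c q_{\kappa,\lambda}\wedge T$ (via the Banach-Alaoglu style argument in \autoref{pr:convergence-of-positive-currents}) will ensure that the common limit is well defined.
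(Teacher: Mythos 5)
Your reduction to \autoref{lm:gMA-continuity} by regularizing $v$ is natural, but the interchange-of-limits step at the end is a genuine gap, not a bookkeeping matter that diagonalization resolves. Set $A_{\kappa,\lambda}:=dd^c q_{\kappa,\lambda}\wedge T$, $B_\lambda:=[dd^cq]_{dd^cv_\lambda}\wedge T$, $A_\kappa:=dd^c q_\kappa\wedge T$ and $B:=[dd^cq]_{dd^cv}\wedge T$. You have $\lim_\kappa A_{\kappa,\lambda}=B_\lambda$ for each fixed $\lambda$, $\lim_\lambda B_\lambda=B$, and $\lim_\lambda A_{\kappa,\lambda}=A_\kappa$ for each fixed $\kappa$, and you want $\lim_\kappa A_\kappa=B$. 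A diagonal choice $\lambda(\kappa)\to\infty$ can (with a countable dense family of test forms) arrange $A_{\kappa,\lambda(\kappa)}-A_\kappa\to 0$, but that only identifies $\lim_\kappa A_\kappa$ with $\lim_\kappa A_{\kappa,\lambda(\kappa)}$; it does \emph{not} show this common limit equals $B$. That would require Moore--Osgood--type uniformity (e.g.\ uniformity in $\lambda$ of the convergence $A_{\kappa,\lambda}\to B_\lambda$), which is not established and is in fact the substance of the lemma. The monotone-convergence route one might try instead --- test against $dd^c\beta\wedge T$ and use $q_{\kappa,\lambda}\searrow q$ --- also fails in general, because $q$ need not be locally integrable against the trace measure of $T$ on $\supp dd^c\beta$; this non-integrability is precisely why $[dd^cq]_{dd^cv}\wedge T$ (with the cut-off $\one_{Z^c}$) differs from a putative $dd^cq\wedge T$, and why BTD cannot be invoked for the joint limit $q_{\kappa,\lambda(\kappa)}\searrow q$.

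The paper avoids the double limit altogether. After reducing to positive $T$ (a reduction you should also state explicitly, before any Banach--Alaoglu argument), it applies \autoref{lm:gMA-continuity} directly on $L(v)^c$ --- where $v$ is already smooth, since it has \emph{neat} analytic singularities --- with $\rho_\kappa=\max\{\cdot,-\kappa\}$, obtaining the weak convergence there. Near a fixed $x\in L(v)$ it picks a bump form $\beta$ at $x$ with $\supp dd^c\beta\subset L(v)^c$ (\autoref{rk:BumpFormExists}) and a single smooth truncation $\tilde v:=\max_\eps\{v,-C\}$ agreeing with $v$ on $\supp dd^c\beta$. Then $\tilde q_\kappa:=\max\{q,\tilde v-\kappa\}$ agrees with $q_\kappa$ on $\supp dd^c\beta$, so Stokes transfers $\int\beta\wedge dd^c q_\kappa\wedge T$ into $\int\beta\wedge dd^c\tilde q_\kappa\wedge T$, where \autoref{lm:gMA-continuity} (now legitimately, since $\tilde v$ is smooth) gives the limit, and one transfers back. \autoref{pr:convergence-of-positive-currents} and \autoref{lm:Unique} then conclude. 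In short: replacing the sequence $v_\lambda$ by a single smooth modification $\tilde v$ removes the iterated limit that your proposal cannot control.
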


\begin{proof}%
	As explained in the comment after \autoref{df:qpos-currents-w-analy-sing}, $T$ is locally the difference of two closed positive currents (with analytic singularities). %
	By proving the convergence \eqref{eq:gMA-continuity-singular} for the minuend and subtrahend separately, we may assume that $T$ is positive.
	As $\rho_\kappa=\max\{\cdot,-\kappa\}$ are non-decreasing convex functions decreasing pointwise to the identity as $\kappa\rightarrow\infty$, \autoref{lm:gMA-continuity} implies the weak convergence \eqref{eq:gMA-continuity-singular} on the complement of $L(v)$.
	Fix an $x\in L(v)$.
	Let $\beta$ be an $(n{-}p{-}1,n{-}p{-}1)$-bump form at $x$ such that $\supp dd^c\beta\subset L(v)^c$, see \autoref{rk:BumpFormExists}.
	Fix a constant $C$ such that $\tilde v := \max_\eps \{v,-C\}$ equals $v$ on $\supp dd^c \beta$ ($\max_\eps$ denotes the regularized max for a small enough $\eps$). %
	We set $\tilde q_\kappa:=\max\{q,\tilde v-\kappa\}=\rho_\kappa(q-\tilde v)+\tilde v$. %
	By \autoref{lm:gMA-continuity}, we get that %
	\begin{equation*}
		dd^c \tilde q_\kappa \wedge T\overset{\kappa\rightarrow\infty}\longrightarrow\big(dd^c q \wedge \one_{Z^c}+dd^c \tilde v\wedge \one_Z\big) T.
	\end{equation*}
	Furthermore, $\tilde q_\kappa$ equals $q_\kappa$ on $\supp dd^c \beta$. By applying Stokes and the facts above, we get
	\begin{align}%
		\nonumber
		\int \beta\wedge  dd^c q_\kappa \wedge T
		=& \int dd^c\beta \wedge q_\kappa  T
		= \int dd^c\beta\wedge \tilde q_\kappa T
		=\int \beta\wedge dd^c \tilde q_\kappa \wedge T\\
		\label{eq:convergence-with-bump-forms}
		\overset{\kappa\rightarrow\infty}\longrightarrow&\int \beta\wedge \big(dd^c q \wedge \one_{Z^c}+dd^c \tilde v\wedge \one_Z\big) T %
		=\int dd^c\beta\wedge \big( q \one_{Z^c}+\tilde v\one_Z\big) T\\
		\nonumber
		=&\int dd^c\beta\wedge \big( q\one_{Z^c}+ v \one_Z\big) T
		=\int \beta\wedge \big(dd^c q \wedge \one_{Z^c}+dd^c v\wedge \one_Z\big) T. %
	\end{align}%
	Since locally there is a closed positive $(1,1)$-form $\alpha$ such that $q$, $v$ and $q_\kappa$ are $\alpha$-psh, %
	$S_\kappa:=(dd^cq_\kappa+\alpha)\wedge T$ are positive, and
	we can  apply \autoref{pr:convergence-of-positive-currents}, %
	\ie $S_\kappa$ is converging weakly to a closed positive current $S$ with $S$ equals $[dd^cq]_{dd^cv}\wedge T+\alpha\wedge T$ on the complement of $L(v)$. %
	In particular, we proved the claimed by using \autoref{lm:Unique}.
\end{proof}

Let us conclude this section with the following variant of \autoref{lm:Unique}.

\begin{lma}%
	\label{lm:bump} %
	Let $T$ be a closed quasipos $(p,p)$-current with analytic singularities such that $T$ has support in an analytic set $A$ with  $\codim A\geq p$.
	If for all points $x\in A$, there is an $(n{-}p,n{-}p)$-bump form $\beta$ such that $\int \beta\wedge T =0$, 
	then $T$ equals zero.
\end{lma}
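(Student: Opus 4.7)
The plan is to reduce the statement to \autoref{lm:Unique} via a local decomposition of $T$ into two closed positive currents. The key observation is that, by the comment immediately following \autoref{df:qpos-currents-w-analy-sing}, a closed quasipositive $(p,p)$-current with analytic singularities is locally the difference of two closed positive currents. On every sufficiently small open $V\subset X$ (small enough that a dominating closed positive $(1,1)$-form $\gamma_+$ is available, \cf\ \autoref{rk:PFofQposCurAS}), I would therefore write $T=T_+-T_-$ with $T_\pm$ closed positive $(p,p)$-currents on $V$. Since the conclusion $T=0$ is local, it suffices to prove $T_+=T_-$ on every such $V$.

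Once this decomposition is in hand, the matching off $A$ is automatic: the assumption $\supp T\subset A$ gives $T_+-T_-=0$ on $V\setminus A$, that is, $T_+=T_-$ on $V\setminus A$. The hypothesis that for every $x\in A$ there is an $(n{-}p,n{-}p)$-bump form $\beta$ at $x$ with arbitrarily small support satisfying $\int \beta\wedge T=0$ then translates directly into $\int \beta\wedge T_+=\int \beta\wedge T_-$. This is precisely the remaining hypothesis of \autoref{lm:Unique} applied to $T_+,T_-$ on $V$ with the analytic set $A$ (whose codimension is $\geq p$). I conclude $T_+=T_-$ on $V$, hence $T=0$ on $V$, and, since $V$ is arbitrary, $T=0$ on $X$.

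Since the argument is essentially a direct reduction to a previously established uniqueness lemma, I do not foresee any substantive obstacle. The only point that deserves care is that the local decomposition $T=T_+-T_-$ must be performed on an open set over which the positivity modification (i.e.\ the addition of suitable positive closed forms, as in \autoref{rk:PFofQposCurAS}) can be carried out; but this is exactly the local-to-global pattern already used in the proof of \autoref{pr:extBTD-def}, and causes no difficulty here.
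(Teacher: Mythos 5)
Your proof is correct but takes a slightly different route from the paper's. You decompose $T=T_+-T_-$ locally (via the comment after \autoref{df:qpos-currents-w-analy-sing} / \autoref{rk:PFofQposCurAS}) and apply \autoref{lm:Unique} to the pair $(T_+,T_-)$. The paper instead exploits the support hypothesis directly: writing $T=\sum_i T_i$ in the local form \eqref{eq:local-form-of-currents-with-as}, it observes that each $T_i$ whose factor $\alpha_i$ has positive degree (i.e.\ $k_i<p$) must vanish, since a closed quasipos current of this shape supported in $A$ with $\codim A\geq p$ can only have contributions from the bidegree-$(p,p)$ ``pure'' pieces, and the remaining coefficients $c_i$ are nonnegative; hence $T$ itself is already a closed \emph{positive} current, and \autoref{lm:Unique} is then invoked with $S=0$. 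Your argument is arguably the more mechanical reduction and avoids that structural step, while the paper's version yields the additional information that, under the support hypothesis, $T$ is in fact positive — an observation that also makes the subsequent appeal to \autoref{lm:Unique} a comparison against the zero current rather than between two potentially nontrivial currents. Both arguments are sound; the only small point to keep in view (for both) is that the bump forms $\beta$ in the hypothesis must be allowed to have arbitrarily small support, as \autoref{lm:Unique} requires.
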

\begin{proof}
	By \autoref{df:currents-w-analy-sing}, %
	we get locally $T=\sum_i T_i$ where
	\begin{equation*}
		T_i=\alpha_i\wedge \one_{W_{i,k_i+1}}dd^c u_{i,k_i}\wedge \one_{W_{i,k_i}}
		\cdots\wedge \one_{W_{i,2}}dd^c u_{i,1} \one_{W_{i,1}}
	\end{equation*}
	with smooth closed real forms $\alpha_i$ of degree $(p-k_i,p-k_i)$, psh $u_{i,j}$ with analytic singularities, and constructible sets $W_{i,j}$ such that $L(u_{i,j})\subset W_{i,j}^c$ for $j\geq 2$.
	If $\codim A> k_i$, then we may assume that $T_i$ vanishes due to $T$'s support (independently of the bump form assumption).
	So, all terms in $T$ vanish except of $T_i$ with $k_i=\codim A=p$, \ie
	\begin{equation*}
		T=\sum\big._i c_i\one_{W_{i,p+1}}dd^c u_{i,p}\wedge \one_{W_{i,p}}
		\cdots\wedge \one_{W_{i,2}}dd^c u_{i,1} \one_{W_{i,1}}.
	\end{equation*}
	As $T$ is quasipos, $c_i\geq 0$, see comment in \autoref{df:qpos-currents-w-analy-sing}. We conclude $T$ is positive. Therefore, \autoref{lm:Unique} implies $T=0$.
\end{proof}

\section{Lelong numbers of closed real currents} %
\label{sc:Lelong}
\noindent
In this section, we are going to study Lelong numbers of closed real currents which are not necessarily positive. To be more precise, we consider currents which are locally the difference of two closed positive ones or currents with analytic singularities, see \autoref{df:currents-w-analy-sing}.
We will recall and generalize basic properties of Lelong numbers following the notation as in the Sections 5ff of \cite{DemaillyAG}*{Chp.~III}; see also \cites{Demailly85, Demailly93}.

\bigskip

Let $X$ be a complex manifold of dimension $n$, and fix a point $x_0\in X$ and coordinates $z$ in a neighbourhood of $x_0$ such shat $z(x_0)=0$.
If $u$ is a (negative) psh function on $X$, the Lelong number of $u$ in $x_0$ is defined by
\begin{equation*}
	\nu(u,x_0) := \liminf_{x\rightarrow x_0}\frac{u(x)}{\log \|z\|}.
\end{equation*}
This definition trivially extends to quasipsh functions $q$. The following lemmata are direct consequences of this definition.
\begin{lma}\label{lm:Lelong-of-restriction}
	If $S\subset X$ is a submanifold \st $q|_S\neq-\infty$, then $\nu(q|_S,x_0)\geq\nu(q,x_0)$. %
\end{lma}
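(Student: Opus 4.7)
The plan is to reduce the statement to the elementary fact that $\liminf$ over a subset is at least $\liminf$ over the whole set, after choosing coordinates adapted to the submanifold $S$.

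First, I would pick local holomorphic coordinates $z=(w,z')$ centered at $x_0$ such that near $x_0$ we have $S=\{z'=0\}$, with $z(x_0)=0$; then $w$ provides local coordinates on $S$ centered at $x_0$, and for every point $x\in S$ sufficiently close to $x_0$ one has the equality $\|z(x)\|=\|w(x)\|$. Since the Lelong number of a quasipsh function is independent of the choice of local coordinates, computing $\nu(q,x_0)$ with the coordinates $z$ and $\nu(q|_S,x_0)$ with the coordinates $w$ is legitimate.

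With this set-up, the Lelong number of the restriction can be rewritten as
\begin{equation*}
\nu(q|_S,x_0)
=\liminf_{\substack{x\to x_0\\ x\in S}}\frac{q(x)}{\log\|w(x)\|}
=\liminf_{\substack{x\to x_0\\ x\in S}}\frac{q(x)}{\log\|z(x)\|}.
\end{equation*}
Because constraining $x$ to lie in the submanifold $S$ throws out some of the approach sequences, the $\liminf$ can only grow:
\begin{equation*}
\liminf_{\substack{x\to x_0\\ x\in S}}\frac{q(x)}{\log\|z(x)\|}
\;\geq\;\liminf_{x\to x_0}\frac{q(x)}{\log\|z(x)\|}=\nu(q,x_0),
\end{equation*}
and putting the two displays together yields the claim.

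The only subtlety worth flagging, rather than a genuine obstacle, is that one uses implicitly two standard facts: that $q|_S$ is again quasipsh on $S$ (this holds whenever $q|_S\not\equiv-\infty$, since the restriction of a local psh representative to the complex submanifold $S$ remains psh), and that the Lelong number is intrinsic, so the auxiliary choice of adapted coordinates does not affect $\nu(q,x_0)$ or $\nu(q|_S,x_0)$.
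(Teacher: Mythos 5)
Your proof is correct and follows exactly the route the paper has in mind: the paper explicitly says both \autoref{lm:Lelong-of-restriction} and \autoref{lm:Lelong-of-pullback} are ``direct consequences'' of the $\liminf$ definition and omits the argument for the restriction lemma, which is precisely the elementary coordinate-adaptation plus the monotonicity of $\liminf$ under restriction that you spell out. The two points you flag as subtleties (invariance of the Lelong number under change of holomorphic coordinates, and that $q|_S$ is again quasipsh) are indeed the only things being used implicitly, and both are standard; nothing is missing.
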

\begin{lma}\label{lm:Lelong-of-pullback}
	If $\pi\colon Y\!\rightarrow X$ is a holom.\ submersion, then $\nu(q\,{\circ}\,\pi,y_0)=\nu(q,\pi(y_0))$. %
\end{lma}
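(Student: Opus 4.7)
The plan is to reduce to a straightforward coordinate computation. Since $\pi\colon Y \to X$ is a holomorphic submersion, near $y_0$ one can choose adapted holomorphic coordinates $(w,z)=(w_1,\dots,w_s,z_1,\dots,z_n)$ on $Y$ vanishing at $y_0$ and coordinates $z=(z_1,\dots,z_n)$ on $X$ vanishing at $\pi(y_0)$, so that $\pi(w,z)=z$. In these coordinates $(q\circ\pi)(w,z)=q(z)$, and the claim reduces to the equality of
\[
\liminf_{(w,z)\to 0}\frac{q(z)}{\log\sqrt{\|w\|^2+\|z\|^2}}
\quad\text{and}\quad
\liminf_{z\to 0}\frac{q(z)}{\log\|z\|}.
\]
Before comparing, I would subtract a constant so that $q\le 0$ on a small neighbourhood of $\pi(y_0)$; this leaves both Lelong numbers unchanged since $C/\log\|\cdot\|\to 0$ as the argument tends to~$0$.

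For the inequality $\nu(q\circ\pi,y_0)\le \nu(q,\pi(y_0))$, I would restrict the left-hand $\liminf$ to the slice $\{w=0\}$. Along any sequence $(0,z_k)\to 0$ the quotient is precisely $q(z_k)/\log\|z_k\|$, so the $\liminf$ over all approaches to $y_0$ is bounded above by the $\liminf$ over this particular subfamily, which realises $\nu(q,\pi(y_0))$.

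For the opposite inequality $\nu(q\circ\pi,y_0)\ge \nu(q,\pi(y_0))$, I would use the elementary bound $\log\sqrt{\|w\|^2+\|z\|^2}\ge\log\|z\|$, valid on a small enough neighbourhood where both sides are negative. Combined with $q(z)\le 0$, a direct sign check gives
\[
\frac{q(z)}{\log\sqrt{\|w\|^2+\|z\|^2}}\ge\frac{q(z)}{\log\|z\|},
\]
and taking $\liminf$ over $(w,z)\to 0$ on both sides (noting such sequences force $z\to 0$ in $X$) yields the desired estimate. No step here is genuinely hard; the only subtlety is the sign of the denominator, which is precisely what the normalisation $q\le 0$ is there to handle.
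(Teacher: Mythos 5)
Your proof is correct and follows essentially the same route as the paper's: adapted product coordinates reduce the claim to a comparison of $\liminf$'s, the slice $\{w=0\}$ gives one inequality, and the pointwise bound $\log\sqrt{\|w\|^2+\|z\|^2}\ge\log\|z\|$ (together with $q\le 0$) gives the other. Your explicit normalization $q\le 0$ makes the sign argument cleaner, but the underlying computation is the same as in the paper.
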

\begin{proof}
	As the statement is local with respect to $y_0$, we may assume that $X\subset\CC^\dimX$, $Y=X\times D$ for $D\subset\CC^m$, $\pi\colon Y\rightarrow X$ is the projection and $y_0=(0,0)$.
	Then, for any sequence $x_k$ in $X$ converging to $0$,
	we get $\frac{q(x_k)}{\log \|x_k\|}=\frac{q(\pi(x_k,0))}{\log \|(x_k,0)\|}$.
	Therefore, $\nu(q,0)\geq\nu(q\circ \pi,(0,0))$.
	If $y_k=(x_k,a_k)$ is a sequence in $Y$ converging to $(0,0)$, then
	$\frac{q(\pi(x_k,a_k))}{\log \|(x_k,a_k)\|} \geq \frac{q(x_k)}{\log \|x_k\|}$ for almost all $k$. %
	This implies the other inequality $\nu(q,0)\leq\nu(q\circ \pi,(0,0))$.
\end{proof}

\medskip

If $T$ is a closed positive $(p,p)$-current, then the Lelong number of $T$ in $x_0$ is given by
\begin{equation}\label{eq:LelongPosCur}
	\nu(T,x_0) = \int \one_{\{x_0\}} \big(dd^c\log \|z\|\big)^{n-p}\wedge T.
\end{equation}
Thereby, %
we use that $L\big(dd^c\log \|z\|\big)=\{x_0\}$ is of codimension $n$ such that the current $\big(dd^c\log \|z\|^2\big)^{n-p}\wedge T$ is defined in the sense of \BTD; see \eg \cite{DemaillyAG}*{Def.~5.4}.
As introduced in \cite{LRSW}*{Sec.~6.4}, we can extend the definition of Lelong numbers to currents which locally are given as differences of two closed positive currents as follows.
\begin{lma}
	Let $T$ be a closed real current on $X$ such that in a neighbourhood of a point $x_0$, $T=T_+-T_-$ for two closed positive currents $T_\pm$. Then, the Lelong number of $T$ in $x_0$ 
	\begin{equation*}
		\nu(T,x_0):=\nu(T_+,x_0)-\nu(T_-,x_0)
	\end{equation*}
	is well-defined since it is independent of the decomposition.
\end{lma}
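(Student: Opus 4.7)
The plan is straightforward: reduce the well-definedness to the additivity of Lelong numbers on sums of closed positive currents, which in turn follows immediately from the integral formula \eqref{eq:LelongPosCur}.

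Concretely, suppose $T = T_+ - T_- = S_+ - S_-$ on a neighbourhood of $x_0$, with all four currents $T_\pm, S_\pm$ closed positive. Rearranging gives the identity
\begin{equation*}
T_+ + S_- = S_+ + T_-,
\end{equation*}
and both sides are closed positive currents (of bidegree $(p,p)$).

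The next step is to invoke \eqref{eq:LelongPosCur}. Since $(dd^c\log\|z\|)^{n-p}\wedge (\,\cdot\,)$ is defined by \BTD for closed positive currents and the resulting current is \emph{linear} in its argument (one can take a common smooth decreasing sequence approximating $\log\|z\|$, and Bedford-Taylor convergence respects sums), the assignment $R \mapsto \nu(R,x_0)$ is additive on closed positive $(p,p)$-currents. Applied to the equality above, this yields
\begin{equation*}
\nu(T_+,x_0) + \nu(S_-,x_0) = \nu(S_+,x_0) + \nu(T_-,x_0),
\end{equation*}
and after rearranging one obtains $\nu(T_+,x_0) - \nu(T_-,x_0) = \nu(S_+,x_0) - \nu(S_-,x_0)$, which is precisely independence of the decomposition.

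I do not expect a genuine obstacle here: the only substantive ingredient is the additivity of Lelong numbers on closed positive currents, which is a standard consequence of the integral representation recalled just above the statement. Everything else is a formal rearrangement.
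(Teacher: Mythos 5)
Your argument is correct and follows essentially the same route as the paper: both proofs rearrange the two decompositions into the identity $T_+ + S_- = S_+ + T_-$ between closed positive currents and then invoke additivity of Lelong numbers. The only difference is cosmetic --- you spell out why additivity holds (linearity of the BTD wedge product in the integral formula \eqref{eq:LelongPosCur}), while the paper states it as a known fact --- and you rearrange the resulting equation symmetrically rather than via the paper's chain of equalities; both yield the same conclusion.
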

We call $\nu(T,x_0)$ the Lelong number of $T$ in $x_0$. By this extension, $\nu(\,\cdot\,,x_0)$ is a linear operator on the currents which are given as the difference of two closed positive currents in a neighbourhood of $x_0$.
\begin{proof}
	Let $T$ equal $T_+-T_-$ and $S_+-S_-$ for closed positive currents $T_\pm$ and $S_\pm$ in a neighbourhood of $x_0$. Then, we get 
	$T_++S_- =S_++T_-$ is a closed positive current. As the Lelong number of the sum of closed positive currents is the sum of the Lelong numbers of the summands, we get
	\begin{align*}
		\nu(T_+,x_0)-\nu(T_-,x_0)
		&= \nu(S_++T_+,x_0)-\nu(S_++T_-,x_0) \\
		&= \nu(S_++T_+,x_0)-\nu(S_-+T_+,x_0)=\nu(S_+,x_0)-\nu(S_-,x_0). 
	\end{align*}
	\EoPfwEq
\end{proof}

\smallskip

By Prop.~5.12 in \cite{DemaillyAG}*{Chp.~III},
we have the following semicontinuity property. %

\begin{prop}\label{pr:Lelong-estimate-sequence}
	Let $T$ be a closed current such that $T=T_+-T_-$ for closed positive $T_\pm$ in a neighbourhood of $x_0$.
	$T_\kappa$ be a sequence of closed currents which converges weakly to $T$ such that $T_{\kappa,+}:=T_\kappa+T_-$ are closed positive currents for (almost) all $\kappa$. Then,
		\[\limsup_{\kappa\rightarrow \infty} \nu(T_\kappa,x_0)\leq \nu(T,x_0).\]
\end{prop}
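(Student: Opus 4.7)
The plan is to reduce everything to the well-known upper semicontinuity of Lelong numbers for weakly convergent sequences of \emph{closed positive} currents, which is exactly the statement of Prop.~5.12 in \cite{DemaillyAG}*{Chp.~III} cited just before.

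First I would unpack the definitions. By assumption, in a neighbourhood of $x_0$ we have the decomposition $T = T_+ - T_-$ with $T_\pm$ closed positive. Setting $T_{\kappa,+} := T_\kappa + T_-$ gives closed positive currents (for almost all $\kappa$) by hypothesis, and since $T_\kappa \rightarrow T$ weakly and $T_-$ is fixed, we get
\begin{equation*}
	T_{\kappa,+} \longrightarrow T + T_- = T_+ \quad \text{weakly.}
\end{equation*}
Moreover, using the linearity of the extended Lelong number established in the lemma preceding the proposition, we have $\nu(T_\kappa,x_0) = \nu(T_{\kappa,+},x_0) - \nu(T_-,x_0)$ (well-defined since $T_{\kappa,+}$ and $T_-$ are closed positive in a neighbourhood of $x_0$).

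Then I would invoke the classical semicontinuity of Lelong numbers for weakly convergent sequences of closed positive currents applied to $T_{\kappa,+} \rightarrow T_+$, namely
\begin{equation*}
	\limsup_{\kappa\rightarrow \infty} \nu(T_{\kappa,+},x_0) \leq \nu(T_+,x_0).
\end{equation*}
Subtracting the constant $\nu(T_-,x_0)$ from both sides of this inequality yields
\begin{equation*}
	\limsup_{\kappa\rightarrow \infty} \nu(T_\kappa,x_0) \leq \nu(T_+,x_0) - \nu(T_-,x_0) = \nu(T,x_0),
\end{equation*}
which is the claimed estimate. There is no genuine obstacle here: the only thing to check carefully is that the identity $\nu(T_\kappa,x_0) = \nu(T_{\kappa,+},x_0) - \nu(T_-,x_0)$ is legitimate, but this is built into the well-definedness of $\nu$ on differences of closed positive currents established in the preceding lemma, so the argument reduces to a one-line invocation of the classical positive-current case.
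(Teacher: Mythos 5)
Your proof is correct and takes essentially the same approach the paper intends: the paper simply cites Demailly's Prop.~5.12 (upper semicontinuity of Lelong numbers for weakly convergent closed positive currents) and leaves implicit exactly the reduction you spell out, namely passing to $T_{\kappa,+}=T_\kappa+T_-\to T_+$ and subtracting the fixed constant $\nu(T_-,x_0)$ using the linearity of the extended Lelong number.
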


\smallskip

For proper holomorphic submersions, we get the following estimates between the Lelong numbers of closed positive currents and their pushforwards or pullbacks. %

\begin{prop}\label{pr:Lelong-estimate-pushforward}
	Let $\pi\colon X\rightarrow Y$ be a proper holomorphic submersion between complex manifolds $X$ and $Y$, with \dimF-dimensional fibres, and
	let $T$ be a closed positive $(p,p)$-current on $X$
	with $p\geq \dimF$. %
	Then, for all $x_0\in X$, %
	we get %
	\begin{equation*}
		\nu(T,x_0)\leq\nu(\pi_\ast T, \pi(x_0)).
	\end{equation*}
\end{prop}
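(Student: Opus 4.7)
The plan is to compare two psh weights on $X$ via Demailly's first comparison theorem for generalised Lelong numbers, and to use the extended Bedford-Taylor-Demailly construction of \autoref{pr:extBTD-def} to identify the Lelong number of the pushforward with a Monge-Amp\`ere mass that can be localised at $x_0$.

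First I would choose coordinates adapted to the submersion. Since the statement is local on $Y$, pick holomorphic coordinates $w=(w_1,\ldots,w_{n-m})$ on a neighbourhood of $y_0:=\pi(x_0)$ with $w(y_0)=0$, and extend them via the submersion to coordinates $(z',w\circ\pi)$ on a neighbourhood of $x_0$ in $X$ with $x_0\leftrightarrow 0$, so that $\pi$ becomes the projection $(z',z'')\mapsto z''$. Introduce the psh weights
\[
\varphi(z):=\log\|(z',z'')\|,\qquad \psi(z):=\log\|z''\|=\log\|w\circ\pi\|,
\]
so that $L(\varphi)=\{x_0\}$ while $L(\psi)=\pi^{-1}(y_0)$ has codimension $n-m$. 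On a small ball around $x_0$ both weights are negative with $\psi\le\varphi$, hence $\ell:=\limsup_{x\to x_0}\varphi(x)/\psi(x)\le 1$.

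Second, I would rewrite $\nu(\pi_\ast T,y_0)$ as a Monge-Amp\`ere mass on the $X$-side via \autoref{pr:extBTD-def}. Setting $\theta:=dd^c\psi=\pi^\ast(dd^c\log\|w\|)$, the codimension hypothesis there is satisfied with equality for $k=n-p$, since $\pi(L(\theta))=\{y_0\}$ has codimension $n-m=(n-p)+p-m$; thus $\pi_\ast([\theta]^{n-p}\wedge T)$ is a well-defined closed positive measure near $y_0$. Approximating $\log\|w\|$ by the bounded truncations $\max(\log\|w\|,-\kappa)$, applying the projection formula for smooth forms, and invoking Bedford-Taylor continuity (\autoref{thm:BT-continuity}) on $Y$ identifies
\[
\pi_\ast\bigl([\theta]^{n-p}\wedge T\bigr)=(dd^c\log\|w\|)^{n-p}\wedge\pi_\ast T,
\]
whose total mass at $\{y_0\}$ is $\nu(\pi_\ast T,y_0)$ by the formula \eqref{eq:LelongPosCur}.

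Finally, by definition of pushforward of a positive measure the mass of $\pi_\ast([\theta]^{n-p}\wedge T)$ at $\{y_0\}$ equals the total mass over the entire fibre $\pi^{-1}(y_0)$ of the generalised Monge-Amp\`ere object $[\theta]^{n-p}\wedge T$ (defined only through the pushforward), and this dominates the mass localised at the single point $x_0\in\pi^{-1}(y_0)$, namely the generalised Lelong number $\nu(T,\psi,x_0)$ in Demailly's sense. Demailly's first comparison theorem (Chp.~III, Thm~7.1 of his book \emph{Complex Analytic and Differential Geometry}), applied to the semi-exhaustive psh weights $\varphi,\psi$ with $\ell\le 1$, then yields
\[
\nu(T,x_0)=\nu(T,\varphi,x_0)\le\ell^{n-p}\,\nu(T,\psi,x_0)\le\nu(T,\psi,x_0)\le\nu(\pi_\ast T,y_0),
\]
which is the desired inequality. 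The main obstacle is making the localised generalised Lelong number $\nu(T,\psi,x_0)$ precise, since $\psi$ has a positive-dimensional pole set along the fibre through $x_0$; this is handled by shrinking the neighbourhood of $x_0$ so that only the germ of $\pi^{-1}(y_0)$ at $x_0$ contributes to the mass, with all integrals computed through the pushforward-only current of \autoref{pr:extBTD-def}.
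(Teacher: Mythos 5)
Your plan is the right one in spirit---compare the two weights $\varphi=\log\|z\|$ and the pullback weight $\psi\circ\pi$, then use the projection formula to move the integral to $Y$---and this is also what the paper does. However, there is a genuine gap at the decisive step, precisely at the point you flag as the ``main obstacle.''

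The problem is that $\psi\circ\pi$ is \emph{not} a semi-exhaustive weight at $x_0$: its sublevel sets $\{\psi\circ\pi<R\}$ are preimages $\pi^{-1}(\{\psi<R\})$, which contain an entire neighbourhood of $x_0$ inside the fibre $\pi^{-1}(y_0)$, hence are never relatively compact in any ball around $x_0$. As a consequence the generalised Lelong number $\nu(T,\psi,x_0)$ that you write down is not defined, and Demailly's comparison theorem (whose hypotheses require both weights to be semi-exhaustive at the point) does not directly apply to the pair $(\varphi,\psi\circ\pi)$. Furthermore, the intermediate object $[\theta]^{n-p}\wedge T$ is, as you yourself note, defined only through its pushforward to $Y$; it is not a measure on $X$, so ``the mass localised at the single point $x_0$'' and its domination by ``the total mass over the entire fibre'' have no meaning at this stage. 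Shrinking the neighbourhood of $x_0$ does not fix this: on any smaller ball the sublevel sets of $\psi\circ\pi$ still fail to be relatively compact.

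The paper's proof closes exactly this gap. It replaces $\psi\circ\pi$ by the semi-exhaustive regularisations $\psi_\eps(z):=\tfrac12\log(\|z'\|^2+\eps\|z''\|^2)$, which for each $\eps>0$ tend to $-\infty$ only at $x_0$, so Demailly's comparison theorem (Thm~7.8 of Chp.~III, not Thm~7.1) applies to the pair $(\gamma\log\|z\|,\psi_\eps)$. It also introduces the truncations $\psi_{\eps,s}=\max(\psi_\eps,s)$ to make the Monge--Amp\`ere powers well defined as honest currents, and then carries out a three-stage limit ($\eps\to 0$, then $s\to-\infty$, then the comparison radii $r,R\to-\infty$) together with a Stokes argument moving the integral from the slab $\{\psi\circ\pi<R\}$ in $X$ to $\{\psi<R\}$ in $Y$, which yields the bound by $\nu(\pi_\ast T,y_0)$. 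That two-parameter approximation is what your argument is missing; without it neither the comparison step nor the ``localisation at $x_0$'' step can be made rigorous.
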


\begin{proof}
	Let \dimX be the dimension of $X$.
	We may assume that $Y$ is a small enough neighbourhood of $y_0:=\pi(x_0)$ such that $Y\subset \CC^{\dimY}$ with $z'=(z_1\kdots z_{\dimY})$ as coordinates of $Y$ and $z'(y_0)=0$.
	By the assumption on $\pi$, $F:=\pi^{-1}(y_0)$ is a smooth complex manifold of dimension $\dimF$.
	We pick some coordinates  $z''=(z_{\dimY+1}\kdots z_{\dimX})$ of $F$ in a neighbourhood $D$ of $x_0$ such that $z''(x_0)=0$.	
	We may assume that $U=Y\times D$ is a subset of $X$ and $\pi$ is the projection on $Y$ %
	and obtain that $z=(z',z'')$ are coordinates for $U$ with $\pi\circ z = z'$.
	
	We set $\ph(z):=\gamma\log\|z\|$ for constant $\gamma>0$ and $\psi(z'):=\log\|z'\|$ which are both semiexhaustive psh functions defining the Lelong number in $x_0$ and $y_0$, respectively, by using Demailly's generalized Lelong numbers, see \cite{DemaillyAG}*{\S 5 in Chp.~III }, \ie
	\begin{align*}
		\gamma^{\dimX-p}\nu(T,x_0)=\nu(T,\ph)
		&:=\lim_{r\rightarrow-\infty} \nu(T,\ph,r), %
		\ \nu(T,\ph,r)
		 :=\int_{\{\ph<r\}} T\wedge(dd^c \ph)^{\dimX-p}  \hbox{\quad and } 	\\
		\nu(\pi_\ast T,y_0)=\nu(\pi_\ast T,\psi)
		&:=\lim_{r\rightarrow-\infty} \nu(\pi_\ast T,\psi,r), %
		\ \nu(\pi_\ast T,\psi,r)
		:= \int_{\{\psi<r\}} \pi_\ast T\wedge(dd^c \psi)^{\dimX-p}.
	\end{align*}

	We define $\psi_\eps:=\frac12\log\big(\|z'\|^2+\eps\|z''\|^2\big)$ whose sequence is decreasing  pointwise to $\psi$ as $\eps\rightarrow 0$ and $\psi_{\eps,s}:=\max\{\psi_\eps,s\}$ whose sequence is decreasing pointwise to $\psi_s:=\max\{\psi,s\}$ as $\eps\rightarrow 0$ and to $\psi_\eps$ as $s\rightarrow -\infty$.

	Since $\|z'\|^2+\eps\|z''\|^2 \leq \|z\|^2$, we obtain $\gamma\psi_\eps\leq \ph$, \ie $\frac{\ph(z)}{\psi_\eps(z)} \leq \gamma$ close to $x$. Moreover, we have
	\[\gamma\geq \limsup_{\psi_\eps(z)\rightarrow -\infty} \frac{\ph(z)}{\psi_\eps(z)} \geq \limsup_{z'\rightarrow 0} \frac{\ph(z',0)}{\psi_\eps(z',0)} = \gamma.\]
	By Demailly's second comparison theorem for Lelong numbers (\cf Thm~7.8 in \cite{DemaillyAG}*{Chp.~III}), %
	we get that 
	\begin{equation*}
		\nu(T\wedge(dd^c \ph)^{\dimX-p},x_0)\leq \gamma^{\dimX-p}\nu(T\wedge(dd^c \psi_\eps)^{\dimX-p},x_0)<\nu(T\wedge(dd^c \psi_\eps)^{\dimX-p},\ph),
	\end{equation*}
	\ie for all constants $R$ there is an $r<R$ such that 
	\begin{equation}\label{eq:PFpfEstPh}
		\int_{B_r} T\wedge(dd^c \ph)^{\dimX-p} %
		\leq %
		\int_{B_R} T\wedge(dd^c \psi_\eps)^{\dimX-p}
	\end{equation}
	where $B_r=\{\ph<r\}$ and $B_R=\{\ph<R\}$ are  balls with radii $e^r$ and $e^R$, resp. %
	As $\psi_{\eps,s}\searrow \psi_\eps$, we get %
	\begin{equation}\label{eq:PFpfEstPsiEps}
		\int_{B_R} T\wedge(dd^c \psi_\eps)^{\dimX-p}\leq \int_{B_R} T\wedge(dd^c \psi_{\eps,s})^{\dimX-p}
	\end{equation}
	for all $s$ small enough, see for example proof of Thm~3.7 in \cite{DemaillyAG}*{Chp.~III}. %
	Furthermore, 
	\begin{equation}\label{eq:PFpfEstPsiEpsS}
		\begin{split}
		\lim_{\eps\rightarrow 0}\int_{B_R} \hspace{-.5em}T\wedge(dd^c \psi_{\eps,s})^{\dimX-p}
		& = \int_{B_R} \hspace{-0em}T\wedge(dd^c \psi_s\circ\pi)^{\dimX-p}\\
		&\leq \int_{\{\psi\circ\pi < R\}}\hspace{-2.2em} T\wedge(dd^c \psi_s\circ\pi)^{\dimX-p}
		=\int_{\{\psi<R\}}\hspace{-1.5em}\pi_\ast T\wedge(dd^c \psi_s)^{\dimX-p} \\
		&=\int_{\{\psi<R\}}\hspace{-1.5em}\pi_\ast T\wedge(dd^c \psi)^{\dimX-p}
		=\nu(\pi_\ast T,\psi,R)
	\end{split}\end{equation}
	where the second last equation follows from Stokes,
	\cf  Prop.~9.3 in \cite{DemaillyAG}*{Chp.~III}.
	\eqref{eq:PFpfEstPh}, \eqref{eq:PFpfEstPsiEps} and \eqref{eq:PFpfEstPsiEpsS} combined imply
	for every $R$ there is an $r<R$ such that $\nu(T,\ph,r)\leq\nu(\pi_\ast T,\psi,R)$. Since the RHS converges to $\nu(\pi_\ast T,y_0)$ as $R\rightarrow-\infty$ and the LHS converges to $\gamma^{\dimX-p}\nu(T,x_0)$ as $r\rightarrow-\infty$ for every $\gamma<1$, we proved the claimed.
\end{proof}

\begin{prop}[Prop.~5 in \cite{Meo96}]\label{pr:Lelong-current-pullback}
	Let $\pi\colon X\rightarrow Y$ be a proper holomorphic submersion between complex manifolds $X$ and $Y$, and %
	let $T$ be a closed positive $(p,p)$-current on $Y$.
	Then, for all $x_0\in X$, we get
	\begin{equation*}
		\nu(\pi^\ast T,x_0)\geq\nu(T, \pi(x_0)).
	\end{equation*}
\end{prop}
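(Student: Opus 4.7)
The plan is a direct fibre-wise computation via the classical ball characterisation of Lelong numbers, rather than going through Demailly's second comparison theorem as in the proof of \autoref{pr:Lelong-estimate-pushforward}. Since $\pi$ is a submersion and Lelong numbers are local, I first choose coordinates $z'=(z_1,\ldots,z_{n-m})$ centred at $y_0:=\pi(x_0)$ on $Y$ and $z=(z',z'')$ centred at $x_0$ on $X$ so that $\pi$ becomes the projection $(z',z'')\mapsto z'$. I work with the standard Kähler forms $\omega:=dd^c\|z\|^2=\omega_Y+\omega_F$, where $\omega_Y:=dd^c\|z'\|^2$ and $\omega_F:=dd^c\|z''\|^2$. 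Recall the Lelong density
\begin{equation*}
	\nu(T,x_0,r)\ :=\ \frac{(n-p)!}{\pi^{n-p}r^{2(n-p)}}\int_{B(x_0,r)}T\wedge\omega^{n-p}
\end{equation*}
is non-decreasing in $r$ and converges to $\nu(T,x_0)$ as $r\to 0^+$ (Lelong-Thie), and similarly for $(T,y_0)$ with $\omega_Y$.

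Next I compute $\nu(\pi^\ast T,x_0,r)$ using the binomial expansion of $\omega^{n-p}=(\omega_Y+\omega_F)^{n-p}$. Because $\pi^\ast T$ involves only $dz',\,d\bar z'$ and $Y$ has complex dimension $n-m$, only the bidegree-matching term proportional to $\omega_Y^{n-m-p}\wedge\omega_F^m$ contributes. Applying Fubini on the fibre ball $\{\|z''\|<\sqrt{r^2-\|z'\|^2}\}$ at each fixed $z'$ and using the standard volume formula for $\omega_F^m$ yields
\begin{equation*}
	\int_{B(x_0,r)}\pi^\ast T\wedge\omega^{n-p}\ =\ C_{m,n,p}\int_{B(y_0,r)}(r^2-\|z'\|^2)^m\,T\wedge\omega_Y^{n-m-p}
\end{equation*}
for an explicit combinatorial constant $C_{m,n,p}$ formed from binomial coefficients and fibre-ball volumes.

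The remaining step uses the layer-cake identity $(r^2-\|z'\|^2)^m=m\int_{\|z'\|^2}^{r^2}(r^2-t)^{m-1}\,dt$ to swap the order of integration, rewriting the right-hand side as a weighted $t$-integral of $\nu(T,y_0,\sqrt t)$ against $(r^2-t)^{m-1}t^{n-m-p}\,dt$. The monotonicity $\nu(T,y_0,\sqrt t)\geq\nu(T,y_0)$ lets me pull the density out as a lower bound, reducing the remaining $t$-integral (after the substitution $t=r^2 s$) to the Beta function value $B(m,n-m-p+1)=(m-1)!(n-m-p)!/(n-p)!$. This identity makes all the combinatorial constants cancel exactly, giving $\nu(\pi^\ast T,x_0,r)\geq\nu(T,y_0)$ uniformly in $r>0$; letting $r\to 0^+$ finishes the proof. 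The main technical delicacy is the bookkeeping of constants, but the Beta-function identity makes them fit precisely, which accounts for the sharp constant $1$ in the final inequality.
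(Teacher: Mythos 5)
Your proof is correct. Note, however, that the paper does not present its own proof of this proposition: it simply cites Meo's Prop.~5 in \cite{Meo96}, which is stated for arbitrary holomorphic maps $f$ under the extra hypothesis that $T=\lim\theta_\kappa$ for smooth closed positive $\theta_\kappa$ and that $f^\ast T:=\lim f^\ast\theta_\kappa$ converges. Your Fubini argument is a genuinely different route that exploits the submersion structure directly: in adapted product coordinates $(z',z'')$ one has $\pi^\ast T=T(z')\otimes 1_{z''}$ essentially by definition of the pullback along a submersion, so $\pi^\ast T\wedge\omega_Y^{n-m-p}\wedge\omega_F^m$ really is a product measure and fibre integration over $B(x_0,r)$ is elementary. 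The bookkeeping does close: the binomial coefficient $\binom{n-p}{m}$ from $(\omega_Y+\omega_F)^{n-p}$, the factor $m$ from the layer-cake identity, and the Beta-function value $(m-1)!(n-m-p)!/(n-p)!$ satisfy $\binom{n-p}{m}\cdot m\cdot\tfrac{(m-1)!(n-m-p)!}{(n-p)!}=1$, and the powers of $\pi$ from the fibre-ball volume ($\pi^m$) and the two density normalizations ($\pi^{n-m-p}$ versus $\pi^{n-p}$) also match, giving the sharp bound $\nu(\pi^\ast T,x_0,r)\geq\nu(T,y_0)$ for every $r>0$ and hence the claim as $r\to 0^+$. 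Your approach buys elementariness, self-containment, and independence of Meo's approximation hypothesis on $T$; what it gives up is Meo's greater generality (non-submersive maps), which the paper does not use. If you write this up, it is worth stating explicitly that the product decomposition of $\pi^\ast T$ is what licenses the Fubini step, since this is precisely where the submersion hypothesis enters.
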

To be more precise, Meo proved that for every holomorphic $f\colon D_1\rightarrow D_2$ with $D_1\subset\CC^n$ and $D_2\subset\CC^m$, and every closed positive $(p,p)$-current $T=\lim_{\kappa\rightarrow\infty}\theta_\kappa$ on $D_2$ with closed positive $(p,p)$-forms $\theta_\kappa$ such that $f^\ast T:=\lim_{\kappa\rightarrow\infty}f^\ast\theta_\kappa$ is a closed positive current on $D_1$, we get $\nu(f^\ast T,x_0)\geq\nu(T, f(x_0))$.

\bigskip

\subsection*{Lelong numbers of currents with analytic singularities}
\begin{rem-def}\label{df:Lelong-for-analytic-sing}
	If $T$ is a closed positive $(p,p)$-current and $\alpha$ a closed strongly positive $(k,k)$-form ($k>0$), then $\alpha\wedge T$ is a closed positive current by definition.
	For every point $x\in X$, we get that
	\begin{align*}
		\nu(\alpha\wedge T,x)
		= & \int \one_{\{x\}} \big(dd^c\log \|z\|\big)^{n-p-k}\wedge \alpha \wedge T\\
		= & \int \alpha \wedge \one_{\{x\}} \big(dd^c\log \|z\|\big)^{n-p-k}\wedge  T = 0
	\end{align*}
	where $z$ denotes coordinates of $X$ with $z(x)=0$.
	The last equation follows from that a closed positive $(s,s)$-current with support in an analytic set of codimension strictly greater than $s$ vanishes.

	Therefore, if $\alpha$ is real (not necessary strongly positive), we may trivially extend the definition of Lelong numbers to currents as $\alpha\wedge T$, and their linear combinations. In particular, this defines the Lelong number for every closed current $T$ with analytic singularities.
	We get
	\begin{align*}
		\nu(T,x)
		=&\ \nu\Big(\sum_i c_i\cdot \one_{W_{i,p+1}}dd^c u_{i,p}\wedge \one_{W_{i,p}} \cdots\wedge \one_{W_{i,2}}dd^c u_{i,1},x\Big)\\
		=& \sum_i c_i\cdot \nu\big( \one_{W_{i,p+1}}dd^c u_{i,p}\wedge \one_{W_{i,p}} \cdots\wedge \one_{W_{i,2}}dd^c u_{i,1},x\big)
	\end{align*}
	for constants $c_i$, psh functions $u_{i,1}\kdots u_{i,p}$ with analytic singularities and (suitable) constructible sets $W_{i,2}\kdots W_{i,p+1}$ (defined in a neighbourhood of $x$).
	This is clear since the Lelong numbers of other terms in the local form of $T$ given by \eqref{eq:local-form-of-currents-with-as} vanish.
\end{rem-def}

\medskip

The following is a special variant of Cor.~7.9 in \cite{DemaillyAG}*{Chp.~III} inferred from Demailly's second comparison theorem. %
\begin{prop}\label{pr:Lelong-estimate-AW-products}
	Let $X$ be a complex manifold of dimension \dimX,
	let $T$ be a closed quasipos $(p,p)$-current with analytic singularities for $p<\dimX$, and
	let $q$ be a quasipsh function with analytic singularities in $Z$. For every point $x\in X$, we get
	\begin{equation}\label{eq:LelongEstAW}
		\nu(dd^c q\wedge \one_{Z^c}T,x)\geq \nu(q,x)\cdot\nu(\one_{Z^c}T,x).
	\end{equation}
	If $\eta$ is a closed quasipos (1,1)-current with analytic singularities in isolated points, then
	\begin{equation}\label{eq:LelongEstgMA}
		\nu([dd^c q]_{\eta}\wedge T,x)\geq\min\{\nu(q,x),\nu(\eta,x)\}\cdot\nu(T,x).
	\end{equation}
\end{prop}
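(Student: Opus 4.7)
The plan is to deduce both estimates from Demailly's second comparison theorem for Lelong numbers (Thm~7.8 in \cite{DemaillyAG}*{Chp.~III}) and its standard consequence $\nu(dd^c u \wedge S, x) \ge \nu(u, x) \nu(S, x)$ (Cor.~7.9, same reference), valid for a closed positive current $S$ and a plurisubharmonic function $u$ whose \BTD-product with $S$ is defined.

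First I would reduce to the case when $T$ is closed and positive. By \autoref{rk:PFofQposCurAS} (applied with $\pi = \Id$), locally near $x$ we may write $T = T_+ - T_-$ with $T_\pm$ closed positive with analytic singularities. All operations entering \eqref{eq:LelongEstAW} and \eqref{eq:LelongEstgMA} -- the indicators $\one_{Z^c}(\cdot)$ and $\one_Z(\cdot)$, the wedge operators $dd^c q \wedge (\cdot)$ and $\eta \wedge (\cdot)$, and the Lelong number $\nu(\cdot, x)$ -- are linear, so it suffices to prove both inequalities assuming $T$ is closed positive. Then $\one_{Z^c}T$ and $\one_Z T$ are closed positive, and by linearity of Lelong numbers $\nu(T, x) = \nu(\one_{Z^c}T, x) + \nu(\one_Z T, x)$.

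For \eqref{eq:LelongEstAW}, the AW-product $dd^c q \wedge \one_{Z^c}T$ is a closed positive current by \cite{LRSW}*{Prop.~3.2}, so its Lelong number at $x$ is given by \eqref{eq:LelongPosCur}. With $\psi(z) := \log\|z-x\|$, one has $\limsup_{z \to x} q(z)/\psi(z) = \nu(q, x)$ by definition. Running the proof of Cor.~7.9 in \cite{DemaillyAG}*{Chp.~III} -- which compares $q$ with $\nu(q,x)\psi$ via the second comparison theorem after approximating $q$ by $\max\{q, \kappa\psi\}$ -- with the closed positive current $\one_{Z^c}T$ in place of $T$ then yields the inequality. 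For \eqref{eq:LelongEstgMA}, the definition \eqref{eq:defgMA} gives
\begin{equation*}
  \nu([dd^c q]_\eta \wedge T, x) = \nu(dd^c q \wedge \one_{Z^c}T, x) + \nu(\eta \wedge \one_Z T, x).
\end{equation*}
The first summand is bounded below by $\nu(q, x) \nu(\one_{Z^c}T, x)$ via \eqref{eq:LelongEstAW}; since $\eta$ has analytic singularities only in isolated points, $\eta \wedge \one_Z T$ is a genuine \BTD product, so the classical Cor.~7.9 bounds the second summand from below by $\nu(\eta, x) \nu(\one_Z T, x)$. Replacing each scalar factor by $\min\{\nu(q, x), \nu(\eta, x)\}$ and using $\nu(\one_{Z^c}T, x) + \nu(\one_Z T, x) = \nu(T, x)$ yields \eqref{eq:LelongEstgMA}.

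The technical heart of the argument is the transfer of Demailly's Cor.~7.9 to the AW-setting -- i.e., verifying that his integration-by-parts and Stokes manipulations go through with $\one_{Z^c}T$ in place of $T$. This should be manageable because $\one_{Z^c}T$ is a closed positive current putting no mass on $Z = L(q)$, so that the approximating products $dd^c \max\{q, \kappa\psi\} \wedge \one_{Z^c}T$ are standard Bedford-Taylor products for which Demailly's estimates apply verbatim, and the desired bound survives the passage to the limit by \autoref{pr:Lelong-estimate-sequence}.
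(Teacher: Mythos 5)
Your reduction to the case where $T$ is closed positive does not work as stated. You write $T = T_+ - T_-$ with $T_\pm$ closed positive (via \autoref{rk:PFofQposCurAS}) and then appeal to ``linearity'' to claim that it suffices to treat the positive case. But \eqref{eq:LelongEstAW} is an \emph{inequality}, not an identity, and inequalities do not subtract: proving $\nu(dd^c q\wedge\one_{Z^c}T_\pm,x)\geq\nu(q,x)\nu(\one_{Z^c}T_\pm,x)$ for both summands gives
\begin{equation*}
  \nu(dd^c q\wedge\one_{Z^c}T,x)\geq\nu(q,x)\nu(\one_{Z^c}T_+,x)-\nu(dd^c q\wedge\one_{Z^c}T_-,x),
\end{equation*}
and the second term can be arbitrarily larger than $\nu(q,x)\nu(\one_{Z^c}T_-,x)$ (for instance when the singularities of $q$ and $T_-$ are ``aligned''), so no lower bound on $\nu(dd^c q\wedge\one_{Z^c}T,x)$ by $\nu(q,x)\nu(\one_{Z^c}T,x)$ follows. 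The paper's reduction is different and does work: it uses the local form \eqref{eq:local-form-of-currents-with-as} of a quasipos current with analytic singularities to write $T$ as a genuinely positive current \emph{plus} a remainder each of whose summands carries a closed smooth form $\alpha_i$ of positive degree as a factor; by \autoref{df:Lelong-for-analytic-sing}, such summands have vanishing Lelong number, and this remains true after applying $\one_{Z^c}$, $\one_Z$, and the wedge operators, so both sides of \eqref{eq:LelongEstAW} are unaffected by dropping them. That is the only decomposition for which the reduction is legitimate, and the distinction matters.

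Once the reduction is repaired, the route you sketch for the core estimate is workable but differs from the paper's and is left somewhat vague at the place where Demailly's Cor.~7.9 is invoked. You approximate $q$ by $\max\{q,\kappa\psi\}$ and apply Cor.~7.9 to the Bedford--Taylor products, then pass to the limit. This does go through (after massaging the quasipsh function into psh-plus-smooth form to satisfy Cor.~7.9's hypotheses, and invoking a convergence result such as \autoref{lm:gMA-continuity-singular}); note, though, that the convergence you need is not a ``standard Bedford--Taylor'' fact since $dd^c q\wedge\one_{Z^c}T$ is an AW product, not a \BTD product. The paper instead rescales $q$ so that $\nu(q,x)<1$, sets $u_\kappa=\max\{u,v-\kappa\}$ with $v=\log\|z\|$, and uses the key observation that $u_\kappa\equiv v-\kappa$ near $x$, so $\nu(dd^cu_\kappa\wedge T,x)=\nu(dd^cv\wedge T,x)=\nu(T,x)$ by elementary reasoning -- no appeal to the second comparison theorem is needed at all, and the full $T$ (not $\one_{Z^c}T$) is kept until the very end, which is what makes \autoref{lm:gMA-continuity-singular} directly applicable. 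Your treatment of \eqref{eq:LelongEstgMA} is correct, though the paper derives the estimate for $\nu(\eta\wedge\one_Z T,x)$ by applying its own \eqref{eq:LelongEstAW} (to $\one_{X\setminus\{x\}}\one_Z T$, which has the same Lelong number since $p<n$) rather than appealing separately to Cor.~7.9.
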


\begin{rem}\label{rk:Lelong-estimate-AW-products-needs-eta}
	The cutoff $\one_{Z^c}$ in the RHS of \eqref{eq:LelongEstAW} cannot be omitted as the following example shows. Let $f$ be holomorphic, let $\sigma$ be smooth, and set $q:=\log |f|^2+\sigma$. Then
	\begin{equation*}
		\nu((dd^c q)^2_\AW,0)=\nu(dd^c q\wedge\one_{Z^c}dd^c q,0)=\nu(\big([f=0]+dd^c \sigma\big)\wedge dd^c \sigma,0)=0
	\end{equation*}
	while $\nu(dd^c q,0)=\nu([f=0],0) \neq 0$.
\end{rem}

\begin{proof}[Proof of \autoref{pr:Lelong-estimate-AW-products}.]
	The proof follows Demailly's strategy adapted to our setting.

	Without loss of generality, we may assume that $T$ is positive since the Lelong number of
	each non-positive term in the local form of $T$ given by \eqref{eq:local-form-of-currents-with-as} vanishes, see \autoref{df:Lelong-for-analytic-sing}.

	Let us assume that we work on a coordinate patch of $X$ with $x=0$ and $z$ denoting the coordinates, and that $q=u+\sigma$ for a psh $u$ with analytic singularities and smooth function $\sigma$. %
	Due to the homogeneity of the Lelong number, we may assume that $\nu(u,0)=\nu(q,0)=1-\eps$ by rescaling $q$, and 
	to prove \eqref{eq:LelongEstAW}, it is enough to show
	\begin{equation}\label{eq:redLelongEstAW}
		\nu(\one_{Z^c}T,0)\leq\nu(dd^c u\wedge \one_{Z^c}T,0)=\nu(dd^c q\wedge \one_{Z^c}T,0).
	\end{equation}
	We set $v:=\log \|z\|$ and $u_\kappa:=\max\{u,v-\kappa\}=\max\{u-v,-\kappa\}+v$.
	Since $\nu(u,0)<\nu(v,0)$, there is an $r_0>0$ such that $v-\kappa\geq u$, \ie $u_\kappa=v-\kappa$ on $\{\|z\|<r_0\}$. %
	In particular, $\nu(u_\kappa,0)=$ $\nu(v,0)=1$ and
	\begin{equation*}
		\nu(dd^c u_\kappa\wedge T,0)=\nu(dd^c v\wedge T,0),
	\end{equation*}
	whereby the wedge product is well-defined due to \BTD.
	On the other side, $u_\kappa \searrow u$ pointwise as $\kappa\rightarrow\infty$ such that by \autoref{lm:gMA-continuity-singular}, we get the weak convergence
	\begin{equation*}
		dd^c u_\kappa\wedge T \overset{\kappa\longrightarrow\infty}{\longrightarrow} dd^c u\wedge \one_{Z^c} T + dd^c v\wedge \one_{Z} T.
	\end{equation*}
	As the Lelong number is upper semicontinuous in its first argument (see \autoref{pr:Lelong-estimate-sequence}), %
	we get
	\begin{equation*}
		\nu(dd^cv\wedge T,0)=\nu(dd^cu_\kappa\wedge T,0)\leq \nu(dd^cu\wedge\one_{Z^c} T,0)+\nu(dd^cv\wedge\one_{Z} T,0).
	\end{equation*}
	By subtracting $\nu(dd^cv\wedge\one_{Z} T,0)$ on both sides, we get \eqref{eq:redLelongEstAW} since $\nu(\one_{Z^c} T,0)=\nu(dd^cv\wedge \one_{Z^c}T,0)$, see \eqref{eq:LelongPosCur}. %
	\medskip

	As $[dd^c q]_{\eta}\wedge T=dd^c q\wedge\one_{Z^c} T + \eta\wedge \one_Z T$ %
	and the Lelong number is linear (in its first argument), \eqref{eq:LelongEstgMA} follows from \eqref{eq:LelongEstAW} and 
	\begin{equation*}
		\nu(\eta\wedge \one_Z T,0)\geq \nu(\eta,0)\cdot\nu(\one_Z T,0).
	\end{equation*}
	which also follows from \eqref{eq:LelongEstAW} since 	%
	$\nu(\one_{X\minus 0}\one_Z T,0)=\nu(\one_Z T,0)$ as $p<n$.
\end{proof}

Combining \autoref{pr:Lelong-estimate-pushforward} with \autoref{pr:Lelong-estimate-AW-products}, we get the following corollary.

\begin{cor}\label{cr:Lelong-estimate-pushforwards-AW-products}
	Let $X$ be a complex manifold of dimension \dimX, and let $\pi\colon X\rightarrow Y$ be a proper holomorphic submersion with \dimF-dimensional fibres.
	Let $T$, $\theta$ and $\eta$ be closed positive currents with analytic singularities on $X$,  
	such that $T$ is of bidegree $(p,p)$, $\theta$ and $\eta$ are of bidegree $(1,1)$ and in the same \BC-class, $L(\eta)$  is a set of isolated points.
	Then, for all $k$ with $\dimF\leq k+p\leq n$, and for all $x\in X$,
	\begin{equation}\label{eq:corLEPFAW}
		\nu\big(\pi_\ast\big([\theta]_{\eta}^k\wedge T\big),\pi(x)\big)\geq\min\{\nu(\theta,x),\nu(\eta,x)\}^k\cdot \nu(T,x).
	\end{equation}
\end{cor}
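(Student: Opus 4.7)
The plan is to chain together the two main tools already developed: the Lelong estimate for generalized Monge--Ampère products (Proposition \ref{pr:Lelong-estimate-AW-products}, specifically inequality \eqref{eq:LelongEstgMA}), applied $k$ times, and the Lelong estimate for pushforwards of closed positive currents under proper submersions (Proposition \ref{pr:Lelong-estimate-pushforward}).

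First I would establish by induction on $j$ that the iterated product $S_j := [\theta]_\eta^j \wedge T$ is a well-defined closed \emph{positive} current with analytic singularities of bidegree $(p{+}j,p{+}j)$ for $0 \le j \le k$. The base case $S_0 = T$ is given. For the inductive step, write $S_j = [\theta]_\eta \wedge S_{j-1} = \theta \wedge \one_{Z^c} S_{j-1} + \eta \wedge \one_{Z} S_{j-1}$, where $Z$ is the singular set of $\theta$; both terms are well-defined (the first by Definition \ref{df:currents-w-analy-sing} since $S_{j-1}$ has analytic singularities, and the second by \BTD\ since $L(\eta)$ is a set of isolated points), and both are positive because $\theta$, $\eta$, and $S_{j-1}$ are positive. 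Thus each $S_j$ is a closed positive current with analytic singularities, and, in particular, it is quasipositive in the sense of Definition \ref{df:qpos-currents-w-analy-sing}.

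Next I would apply \eqref{eq:LelongEstgMA} to $S_{j-1}$ at the point $x$: since $\theta = dd^c q$ for a quasipsh $q$ with analytic singularities in $Z$ and $S_{j-1}$ is quasipos with analytic singularities,
\begin{equation*}
	\nu(S_j, x) = \nu([dd^c q]_{\eta} \wedge S_{j-1}, x) \geq \min\{\nu(\theta, x), \nu(\eta, x)\} \cdot \nu(S_{j-1}, x).
\end{equation*}
Iterating this inequality from $j = 1$ to $j = k$ (note that $\nu(\theta,x) = \nu(q,x)$) yields
\begin{equation*}
	\nu([\theta]_\eta^k \wedge T, x) \geq \min\{\nu(\theta, x), \nu(\eta, x)\}^k \cdot \nu(T, x).
\end{equation*}

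Finally, since $S_k = [\theta]_\eta^k \wedge T$ is a closed positive current of bidegree $(k{+}p,k{+}p)$ and the hypothesis $m \le k+p$ is precisely the bidegree condition required by Proposition \ref{pr:Lelong-estimate-pushforward}, I would apply that proposition to $S_k$ at $x$ to get
\begin{equation*}
	\nu(\pi_\ast([\theta]_\eta^k \wedge T), \pi(x)) \geq \nu([\theta]_\eta^k \wedge T, x).
\end{equation*}
Combining the last two displays gives \eqref{eq:corLEPFAW}. The only thing needing care is the positivity of the intermediate iterates $S_j$; everything else is a routine assembly of cited statements, so no serious obstacle is expected.
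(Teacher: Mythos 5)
Your proposal is correct and is exactly the argument the paper has in mind: the paper dispatches this corollary in a single sentence as an immediate combination of Proposition \ref{pr:Lelong-estimate-pushforward} and Proposition \ref{pr:Lelong-estimate-AW-products}, and you have simply spelled out the iteration of \eqref{eq:LelongEstgMA} and the degree bookkeeping (the hypothesis $k+p\leq n$ covers the $p<n$ requirement in Proposition \ref{pr:Lelong-estimate-AW-products} at every step, while $m\leq k+p$ is what Proposition \ref{pr:Lelong-estimate-pushforward} needs at the end). The positivity check for the iterates $S_j$ is the right thing to verify before invoking the pushforward estimate, and your justification is sound.
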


\begin{rem}\label{rk:pushforward-Lelong-estimate-fails-for-non-pos}
	In general, \eqref{eq:corLEPFAW} cannot hold if we consider quasipos $\theta$ which are not positive near $\pi^{-1}(\pi(x))$. This is obvious since the RHS in \eqref{eq:corLEPFAW} is always positive for such $\theta$ while the LHS can be negative as in the following simple example.

	Let $Y$ be \CC with coordinate $z$, let $F$ be $\PP^1$ with the coordinates $[\xi_1\,{:}\,\xi_2]$, let $X$ be $\CC\times\PP^1$, let $\pi\colon X\rightarrow Y$ be the projection on the first component, and
	let $\omega_\FS$ be the pullback to $X$ of the Fubini-Study form on $\PP^1$.
	Then, $\theta=[z=0]-\omega_\FS$ is quasipos and $\pi_\ast([\theta]^2_\eta)=\pi_\ast([\theta]^2)
	=\pi_\ast(-\omega_\FS)\wedge[z=0]=-[z=0]$ (for any $\eta$) such that $\nu(\pi_\ast([\theta]^2),0)=-1$. %
\end{rem}

\begin{rem}\label{rk:pushforward-Lelong-not-indep-of-eta}
	If $\alpha_1,\alpha_2\in\{\theta\}_{\BC}$ are smooth, then 
	$\nu\big(\pi_\ast([\theta]_{\alpha_1}^k\wedge T),y\big)=\nu\big(\pi_\ast([\theta]_{\alpha_2}^k\wedge T),y\big)$ following the argumentation in the proof of \cite{LRSW}*{Thm~1.1 (3)}. As the following counter example shows, this is not correct if one of them has neat analytic singularities in isolated points.

	Let $Y$ be $\CC$ with the coordinate $z$, let $X$ be $\CC\times\PP^1$ with the coordinate $[\xi_1\,{:}\,\xi_2]$, and let $\pi\colon X\rightarrow Y$ be the projection on the first component.
	Consider the current $\theta=[\xi_2=0]$ which has analytic singularities in $Z=\{\xi_2=0\}$, $\eta:=\frac12dd^c\log\big(|z|^2|\xi_1|^2+|\xi_2|^2\big)$ which has only one neat analytic singularity in $(0,[1\,{:}\,0])$, and $\alpha=\omega_\FS=\frac12dd^c\log\big(|\xi_1|^2+|\xi_2|^2\big)$ the Fubini-Study form on $\PP^1$. Then, $\eta-\alpha=dd^c\log\frac{|z|^2|\xi_1|^2+|\xi_2|^2}{|\xi_1|^2+|\xi_2|^2}$.
	For any ball $B\subset\CC$, we get that
	\begin{equation*}
		\int_B\pi_\ast\big([\theta]_{\eta}^2\big)-\int_B\pi_\ast\big([\theta]_{\alpha}^2\big)=\int_B\pi_\ast\big((\eta-\alpha)\wedge\one_Z[\xi_2=0]\big)
		=\int_{B\times\{[1:0]\}}\hspace*{-1em}(\eta-\alpha) %
		=\int_{B} dd^c\log{|z|}.
	\end{equation*}
	Therefore, $\nu\big(\pi_\ast\big([\theta]_{\eta}^2\big),0\big)-\nu\big(\pi_\ast\big([\theta]_{\alpha}^2\big),0\big)=\nu(\log|z|,0)=1$.
\end{rem}

At least, we obtain that the Lelong numbers of currents $\pi_\ast([\theta]_{\eta}\wedge T)$ are independent of $\eta$ among all quasipos $(1,1)$-currents with the same type of singularities as follows. 
\begin{prop}\label{pr:pushforward-Lelong-indep-of-sp-eta}
	Let $\pi\colon X\rightarrow Y$ be a proper holomorphic submersion,
	let $T$ and  $\theta$ be closed quasipos current with analytic singularities of bidegree $(p,p)$ and $(1,1)$, resp.  %
	Let $\eta_1, \eta_2$ be two quasipos $(1,1)$-currents  with neat analytic singularities in isolated points both in the same \BC-class as $\theta$ such that $\nu(\eta_1,x)=\nu(\eta_2,x)$ for all $x\in X$. %
	Then, for all $y\in Y$, we get
	\begin{equation*}
		\nu\big(\pi_\ast([\theta]_{\eta_1}^k\wedge T),y\big)=\nu\big(\pi_\ast([\theta]_{\eta_2}^k\wedge T),y\big).
	\end{equation*}
\end{prop}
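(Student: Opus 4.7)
My plan is to reduce the claim to the already-known independence for smooth representatives via a telescoping identity and monotone approximation. First, invoking \autoref{def:genMA}, for any closed current $S$ with analytic singularities we have $[\theta]_{\eta_1}\wedge S - [\theta]_{\eta_2}\wedge S = (\eta_1-\eta_2)\wedge \one_Z S$, where $Z$ denotes the analytic singular set of $\theta$. Iterating this identity yields
\[
[\theta]_{\eta_1}^k\wedge T - [\theta]_{\eta_2}^k\wedge T \ =\ \sum_{j=0}^{k-1}[\theta]_{\eta_1}^{k-1-j}\wedge (\eta_1-\eta_2)\wedge \one_Z \big([\theta]_{\eta_2}^j\wedge T\big),
\]
so the task reduces to estimating the Lelong number at $y$ of the pushforward of each telescoping term. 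Since $\eta_1,\eta_2\in\{\theta\}_{\BC}$, globally $\eta_1-\eta_2=dd^c\phi$ for a function $\phi$ locally equal to a difference of quasipsh potentials, and the assumption $\nu(\eta_1,x)=\nu(\eta_2,x)$ forces $\eta_1,\eta_2$ to have the same set $\Sigma$ of isolated singular points with matching local Lelong numbers at each point of $\Sigma$.

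Next I would exploit Demailly's regularization (\autoref{thm:Dem-smooth-approximation}) to obtain sequences of smooth forms $\eta_{i,\kappa}\in\{\theta\}_{\BC}$ decreasing pointwise to $\eta_i$. For smooth representatives, the Stokes computation from \cite{LRSW}*{Thm~1.1 (3)} yields
\[
\nu\big(\pi_\ast([\theta]_{\eta_{1,\kappa}}^k\wedge T),y\big) \ =\ \nu\big(\pi_\ast([\theta]_{\eta_{2,\kappa}}^k\wedge T),y\big)\qquad\text{for all }\kappa.
\]
Applying \autoref{lm:gMA-continuity-singular} iteratively and then pushing forward, one has $\pi_\ast([\theta]_{\eta_{i,\kappa}}^k\wedge T)\to \pi_\ast([\theta]_{\eta_i}^k\wedge T)$ weakly, and \autoref{pr:Lelong-estimate-sequence} gives the semicontinuity bound $\nu(\pi_\ast([\theta]_{\eta_i}^k\wedge T),y)\geq \limsup_\kappa\nu(\pi_\ast([\theta]_{\eta_{i,\kappa}}^k\wedge T),y)$ for both $i=1,2$, which on its own is not enough to conclude.

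To promote both inequalities to equality, I would pair each telescoping term with a bump form $\beta$ at $y$ (\autoref{rk:BumpFormExists}) supported so that $\supp(dd^c\beta)$ avoids $\pi(\Sigma)$, exploit $\eta_1-\eta_2=dd^c\phi$, and transfer the $dd^c$ onto $\pi^\ast\beta$ by Stokes. The matching Lelong numbers of $\eta_1,\eta_2$ at each point of $\Sigma\cap\pi^{-1}(y)$ cause the local residues to cancel, so each telescoping term pairs to zero with $\beta$; \autoref{lm:bump} (or \autoref{lm:Unique}) then concludes $\nu(\pi_\ast([\theta]_{\eta_1}^k\wedge T),y)=\nu(\pi_\ast([\theta]_{\eta_2}^k\wedge T),y)$.

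The main obstacle is making the Stokes argument rigorous despite $\phi=q_1-q_2$ possibly being unbounded: matching Lelong numbers do not imply boundedness of the potential difference. I would handle this by truncating $\phi^{(\lambda)}:=\max_\eps\{q_1,-\lambda\}-\max_\eps\{q_2,-\lambda\}$ (with $\max_\eps$ the regularized max), performing the Stokes computation at the bounded level, and then letting $\lambda\to\infty$ via Bedford--Taylor continuity (\autoref{thm:BT-continuity}), using the matching Lelong numbers to guarantee that the residual boundary contributions vanish in the limit.
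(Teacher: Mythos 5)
Your proposal takes a genuinely different route from the paper's, and the difference is instructive. The paper's entire proof is essentially one observation: writing $\eta_i=dd^c q_i$ locally with $q_i$ quasipsh with neat analytic singularities in isolated points, the matching Lelong numbers are claimed to force $q_1-q_2$ to be \emph{smooth}, after which the argument reduces verbatim to the smooth case already treated in \cite{LRSW}*{Thm~1.1 (3)}. You never make this observation; in fact you explicitly assert the opposite ("matching Lelong numbers do not imply boundedness of the potential difference"). Your scepticism is in fact not unreasonable: for $q_1=\log(|z_1|^2+|z_2|^4)$ and $q_2=\log(|z_1|^2+|z_2|^2)$ on $\CC^2$, both have neat analytic singularities only at the origin with $\nu(q_i,0)=2$, yet $q_1-q_2$ is unbounded along $\{z_1=0\}$. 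So you have put your finger on a real subtlety the paper glides past — but in doing so you abandon the single idea that makes the paper's argument short, without replacing it by something that closes the gap.

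The alternative route you sketch has its own gaps. The telescoping identity and the reduction to $(\eta_1-\eta_2)\wedge\one_Z(\cdot)$ are fine, but the plan to pair each telescoping term with one bump form at $y$ and invoke \autoref{lm:bump} or \autoref{lm:Unique} does not directly give what you want: those lemmata conclude that a current vanishes (given the right support and a \emph{family} of bump-form pairings), not that two Lelong numbers at $y$ coincide; and the hypothesis that the difference current is supported in an analytic set of codimension at least its degree is never verified. More seriously, the closing truncation step — regularising $q_1,q_2$ at level $-\lambda$, applying Stokes, and "using the matching Lelong numbers to guarantee that the residual boundary contributions vanish" — is exactly the content that needs to be proved; as stated it just relabels the difficulty. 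Finally, the earlier detour via Demailly's regularisation and \autoref{pr:Lelong-estimate-sequence} is, as you yourself observe, a dead end (semicontinuity gives only one inequality), so it adds length without progress. In short: the paper reduces to the known smooth case by a structural claim about $q_1-q_2$; you try to bypass that claim, and the replacement argument is not complete.
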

\begin{proof}
	As $\eta_1$ and $\eta_2$ are in the same \BC-class, we get that $\eta_1-\eta_2=dd^c q$ for a function $q$ which is locally the difference of two quasipsh $q_1$ and $q_2$ function with neat analytic singularities in isolated points (given by $\eta_1=dd^c q_1$ and $\eta_2=dd^c q_2$).
	As $\nu(q_1,x)=\nu(q_2,x)$, we obtain that $q_1-q_2=q$ is smooth.
	Now, we can repeat the argumentation in the proof of \cite{LRSW}*{Thm~1.1 (3)} to prove the claimed. %
\end{proof}

\section{Proofs of the main results}
\label{sc:proofs}
\noindent
Before proving the second main result of the present work, let us observe the following.

\let\dimYBU\dimY
\def\dimY{\ensuremath{\tilde n}}
\begin{lma}\label{lm:peaker}
	Let $(X,\omega)$ be a Kähler manifold. For each relatively compact $U\subset X$, there exists a constant $\delta=\delta(\omega)$ such that for every $x\in U$ and positive $\tilde\delta\leq \delta$, there is a closed positive $(1,1)$-current $\eta_x\in\{\omega\}$ with neat analytic singularities only in $x$ and $\nu(\eta_x,x)=\tilde\delta$. %

	If $X\subset B\times \PP^m$ for $B\subset \CC^{\dimY}$  %
	and if $\omega$ is given as restriction of the product of the Euclidean and the Fubini-Study metric,
	then we can take $\delta= 1$. %
\end{lma}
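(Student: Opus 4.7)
The plan is to realise each $\eta_x$ as $\omega+dd^c u_x$ for a globally defined $\omega$-psh function $u_x$ on $X$ that has a single neat analytic logarithmic pole of Lelong number $\tilde\delta$ at $x$ and is smooth elsewhere; then $\eta_x\in\{\omega\}_{\BC}$ automatically and $\nu(\eta_x,x)=\tilde\delta$ by the behaviour of $u_x$ near $x$.

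For the special case $X\subset B\times\PP^m$, the plan is to write down $u_x$ explicitly. After choosing homogeneous coordinates on $\PP^m$ so that the $\PP^m$-component of $x=(a,p)$ becomes $p=[1{:}0{:}\dots{:}0]$, set
\begin{equation*}
  u_x(z,[\xi_0{:}\dots{:}\xi_m]) \,:=\, \tfrac{\tilde\delta}{2}\log\frac{\|z-a\|^2\,|\xi_0|^2+|\xi_1|^2+\dots+|\xi_m|^2}{|\xi_0|^2+\dots+|\xi_m|^2}.
\end{equation*}
In the affine chart $\{\xi_0\neq0\}$ with $w_i=\xi_i/\xi_0$ this reads $u_x=\tfrac{\tilde\delta}{2}\log(\|z-a\|^2+\|w\|^2)-\tilde\delta\cdot\tfrac12\log(1+\|w\|^2)$, and since $\tfrac12\log(1+\|w\|^2)$ is the standard local potential of $\omega_\FS$, one computes
\begin{equation*}
  \omega+dd^c u_x \,=\, \omega_B+(1-\tilde\delta)\,\omega_\FS + \tfrac{\tilde\delta}{2}\,dd^c\log\bigl(\|z-a\|^2+\|w\|^2\bigr),
\end{equation*}
a sum of closed positive $(1,1)$-currents whenever $\tilde\delta\leq 1$ (the last summand is positive because its argument is a sum of moduli squared of holomorphic functions). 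In the remaining affine charts $\{\xi_i\neq0\}$, $i\geq1$, both numerator and denominator of $u_x$ are smooth strictly positive functions, so $u_x$ is smooth there. Thus $u_x$ is smooth away from $x$ with a neat analytic log pole of Lelong number $\tilde\delta$ at $x$. Restricting to $X$ preserves everything: in local holomorphic coordinates on $X$ centered at $x$, the pullback of $\|z-a\|^2+\|w\|^2$ is a positive definite quadratic form (since the embedding is an immersion at $x$), so the potential still looks like $\tilde\delta\log\|\cdot\|+O(1)$ near $x$ and the Lelong number stays $\tilde\delta$.

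For the general Kähler case, the plan is to cover $\bar U$ by finitely many coordinate charts $V_1,\dots,V_N\subset X$ on each of which $\omega$ is comparable to the Euclidean form, and pick, by a Lebesgue-number argument on the compact set $\bar U$, a single radius $r>0$ such that each $x\in\bar U$ admits an index $i$ with $B(x,2r)\subset V_i$. Fix a smooth cutoff $\chi\colon[0,\infty)\to[0,1]$ with $\chi\equiv 1$ on $[0,r]$ and $\chi\equiv 0$ on $[2r,\infty)$, and for $x\in V_i$ set
\begin{equation*}
  u_x(y)\,:=\,\begin{cases} \tilde\delta\,\chi\bigl(\|z_i(y)-z_i(x)\|\bigr)\,\log\|z_i(y)-z_i(x)\| & y\in V_i,\\ 0 & y\notin V_i, \end{cases}
\end{equation*}
using local coordinates $z_i$ on $V_i$. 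Then $u_x=\tilde\delta\log\|z_i(\cdot)-z_i(x)\|$ on $B(x,r)$ (a neat analytic log pole with Lelong number $\tilde\delta$ and no other singularities), $u_x\equiv 0$ outside $B(x,2r)$, and on the annulus in between, the product rule yields a uniform estimate $\|(dd^c u_x)^-\|_\infty = O(\tilde\delta\,|\log r|/r^2)$. Taking $\delta>0$ small enough that this uniform bound is dominated by $\omega$ on $\bar U$, one gets $\omega+dd^c u_x\geq 0$ for every $x\in U$ and every $\tilde\delta\leq\delta$, and $\eta_x:=\omega+dd^c u_x$ is the desired current.

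The main obstacle is ensuring that a single $\delta$ works for \emph{all} $x\in U$: the cutoff radius $r$, the comparability constants between $\omega$ and the Euclidean form, and the resulting bound on the negative part of $dd^c u_x$ must all be chosen before $x$ is allowed to vary, which is exactly what the Lebesgue-number/compactness step on $\bar U$ provides. The sharp bound $\delta=1$ in the special case, by contrast, is the algebraic observation that the Fubini-Study potential $\tfrac12\log(1+\|w\|^2)$ appears with coefficient $\tilde\delta$ in the denominator of the explicit $u_x$, so the residual $(1-\tilde\delta)\,\omega_\FS$ remains nonnegative precisely up to the threshold $\tilde\delta=1$.
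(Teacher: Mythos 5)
Your proof is correct and follows essentially the same approach as the paper: a cutoff-logarithmic potential glued to $\omega$ for the general K\"ahler case, with uniformity of $\delta$ over $\bar U$ by compactness, and an explicit Fubini--Study-type pole potential for the product case $X\subset B\times\PP^m$. You make the compactness step slightly more explicit (Lebesgue number for a uniform cutoff radius, explicit $O(\tilde\delta|\log r|/r^2)$ bound on the negative part of $dd^c u_x$ over the annulus) and observe directly that the Lelong number is preserved under restriction because the pullback of $\|z-a\|^2+\|w\|^2$ under a local holomorphic immersion is comparable to $\|\zeta\|^2$, which renders unnecessary the correction factor $\mu=\nu(u_x,x)/\nu(\tilde u_x,x)\leq 1$ that the paper inserts for the restricted case.
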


\begin{proof}
	Set $\dimX:=\dim X$. For every $x\in U$, 
	let $B\subset U$ be a neighbourhood  of $x$ with coordinates $z=(z_1\kdots z_\dimX)$ such that $B\subset\CC^\dimX$ with $z(x)=0$, and let $\chi$ be a cutoff function on $B$ which is identically $1$ near $x$.
	The function $q_x(z):=\chi(z)\log\|z\|$ is a quasipsh function on $B$ which extends trivially to $U$. There is a constant $\delta_x>0$ such that $\delta_x dd^cq_x+\omega\geq 0$.
	We can choose $\delta_x$ continuously depending of $x$.
	As $U$ is relatively compact in $X$, we get $\delta:=\min_{x\in U} \delta_x>0$.
	Then, $\eta_x:=\tilde\delta dd^cq_x+\omega$ has the claimed properties for all $\tilde\delta\leq\delta$.	

	For $X=B\times\PP^\dimF$ with the coordinates $z'$ for $B$ and $[\xi_1\,{:}\ldots{:}\,\xi_{m+1}]$ for $\PP^\dimF$, we may assume that $x=(0,[1\,{:}\,0\,{:}\ldots{:}\,0])$.
	Then, $u_x(z',[\xi])=\frac12\log\big(\|z'\|^2|\xi_1|^2+|\xi_2|^2+\ldots+|\xi_{m+1}|^2\big)+\|z'\|^2$ is psh. So, $dd^cu_x$ is a well-defined positive current on $X$, in the same cohomology class as $\omega$, and $\nu(dd^cu_x,x)=1$.
	In particular, $\eta_x=\tilde\delta dd^cu_x+(1-\tilde\delta)\omega$ has the claimed properties.

	For $X\subset B\times \PP^\dimF$, we set 
	$\tilde u_x=(\pr_2^\ast{u_x})|_X$ where $\pr_2\colon B\times\PP^\dimF\rightarrow \PP^\dimF$ denotes the projection on $\PP^\dimF$.
	Then, $\eta_x=\tilde\delta\mu{\cdot}dd^c \tilde u_x+(1-\tilde\delta\mu)\omega$ has the claimed properties for $\mu=\nu(u_x,x)/\nu(\tilde u_x,x) \leq 1$ (for the last inequality, see \autoref{lm:Lelong-of-restriction} and \autoref{lm:Lelong-of-pullback}).
\end{proof}
\let\dimY\dimYBU

\RepeatSecondMainTheorem

\begin{proof}%
	By the assumptions, there are Kähler forms $\omega_i$ on $U=\pi^{-1}(V)$ such that $\theta_i$ and $\omega_i$ are in the same cohomology class on $U$.
	Therefore, \autoref{lm:peaker} gives us positive constants $\delta_i$ and, for every $x\in\pi^{-1}(y)$, closed positive $(1,1)$-currents $\tilde\eta_i\in\{\omega_i\}_{\BC}=\{\theta_i\}_{\BC}$ with neat analytic singularities only in $x$ on $U$ such that $\nu(\tilde\eta_i,x)=\delta_i$.
	In particular, $\delta_i$ and $\tilde\eta_i$ only depend on the cohomology class represented by $\omega_i$.

	Let us extend $\tilde\eta_i$ to the whole $X$.
	Pick a closed real $(1,1)$-form  $\alpha_i$ on $X$ in the same \BC-class as $\theta_i$.
	In particular, %
	there is a quasipsh function $q_i$ with neat analytic singularities in isolated points on $U$
	such that $dd^c q_i=\tilde\eta_i-\alpha_i$.
	Let $\chi$ be a cutoff function with support on $V$ and identically $1$ in a neighbourhood of $y$. We define $\eta_i:=dd^c(\chi\circ\pi\cdot q_i)+\alpha_i$ which is in the same \BC-class as $\theta_i$ and $\eta_i=\tilde\eta_i$ on a neighbourhood of $\pi^{-1}(y)$.
	Furthermore, $\eta_i$ is positive there and $\nu(\eta_i,x)= \delta_i$.%
	
	Following \autoref{df:qpos-currents-w-analy-sing}, we get that 
	$T:=[\theta_1]_{\eta_t}^{k_t}\wedge \cdots \wedge[\theta_1]_{\eta_1}^{k_1}$ is a closed quasipos current with analytic singularities which is positive in a neighbourhood of $\pi^{-1}(y)$.
	In particular, \autoref{pr:Lelong-estimate-pushforward} implies $\nu(\pi_\ast T,y)\geq \nu(T,x)$ as long $\dimF\leq k_1+\ldots+k_t\leq \dimX$. %
	By \autoref{pr:Lelong-estimate-AW-products}, we get that
	$\nu(T,x)\geq \prod_i\min\{\nu(\theta_i,x),\delta\}^{k_i}$. This proves (i).
	\medskip

	(ii) is now a direct consequence of \autoref{pr:pushforward-Lelong-indep-of-sp-eta} (recursively applied): We can exchange the quasipositive $\eta_i$ in the current on the LHS of \eqref{eq:mainthm-2bII} by a positive $\tilde\eta_i$ with the same Lelong number in $x$ which is given by \autoref{lm:peaker}.
\end{proof}
\bigskip

Let us continue with the proof of the first main result of the present work.

\RepeatFirstMainTheorem

\begin{proof}%
	Since the statement is local with respect to neighbourhoods of $y\in Y$,
	and since $\theta_i$ is in a Kähler class near $\pi^{-1}(y)$ by assumption,
	we may assume that $X$ is Kähler by shrinking $Y$ and $X=\pi^{-1}(Y)$.
	Let $\omega$ denote a Kähler form on $X$. %
	By \autoref{lm:peaker}, there are constants $\delta_i>0$ and for each $x\in\pi^{-1}(y)$, closed positive $(1,1)$-currents $\eta_i=\eta_{i,x}\in\{\tilde\theta_{i}\}_{\BC}$ %
	with neat analytic singularities only in $x$.

	\smallskip

	\ManualItem{i} By shrinking %
	$Y$ and $X=\pi^{-1}(Y)$ again, Demailly's regularization result (see \autoref{thm:DemRegAnalySing}) implies that
	there are positive constants $\eps_\kappa$ %
	and closed $\eps_\kappa\omega$-positive $(1,1)$-currents $\theta_{i,\kappa}\in\{\theta_i\}_{\BC}$ with analytic singularities such that $\theta_{i,\kappa}$ is decreasing pointwise to $\theta_i$, $\nu(\theta_{i,\kappa},x)\rightarrow \nu(\theta_{i},x)$ and $\eps_\kappa\rightarrow 0$ as $\kappa\rightarrow\infty$. %
	We set $\tilde\theta_{i,\kappa}:=\theta_{i,\kappa}+\eps_\kappa\omega$ %
	and $\tilde\eta_{i,\kappa}:=\eta_i+\eps_\kappa\omega\in\{\tilde\theta_{i,\kappa}\}_{\BC}$.
	As $\tilde\theta_{i,\kappa}$ are positive, we can apply \autoref{MainThm:analytic-singularities} such that
	\begin{equation}
		\label{eq:first-inequ-to-get-first-main-result}
		\begin{split}
		\nu\Big(\pi_\ast \big( [\tilde\theta_{t,\kappa}]_{\tilde\eta_{t,\kappa}}^{k_t}\!\!\wedge \cdots \wedge[\tilde\theta_{1,\kappa}]_{\tilde\eta_{1,\kappa}}^{k_1}\big), y\Big)
		&\geq\prod\big._{i=1}^t \min\{\nu(\tilde\theta_{i,\kappa},x),\nu(\tilde\eta_{i,\kappa},x)\}^{k_i}\\ %
		& = \prod\big._{i=1}^t \min\{\nu(\theta_{i,\kappa},x),\delta_i\}^{k_i}.%
	\end{split}\end{equation}
	By the Theorems \ref{thm:extBTD-continuity} and \ref{thm:extBTD-coincides-genMA}, we obtain that
	\begin{equation*}
		\pi_\ast \big( [\theta_{t}]^{k_t}\wedge \cdots \wedge[\theta_{1}]^{k_1}\big)=
		\lim_{\kappa\rightarrow\infty}\pi_\ast \big( [\theta_{t,\kappa}]_{\eta_t}^{k_t}\wedge \cdots \wedge[\theta_{1,\kappa}]_{\eta_1}^{k_1}\big).
	\end{equation*}
	Since $[\tilde\theta_{i,\kappa}]_{\tilde\eta_{i,\kappa}}\wedge\cdot=\big([\theta_{i,\kappa}]_{\eta_{i}}+\eps_\kappa \omega\big)\wedge\cdot $, we get
	\begin{equation*}
		\begin{split}
			[\tilde\theta_{i,\kappa}]_{\tilde\eta_{i,\kappa}}^{k_i}\wedge T
			=&\sum\big._{l=0}^{k_i} (-1)^l \smbinom{k_i}{l}[\theta_{i,\kappa}]_{\eta_{i}}^{k_i-l}\wedge\eps_\kappa^l\omega^{l}\wedge T\\
			=&[\theta_{i,\kappa}]_{\eta_{i}}^{k_i}\wedge T+
			\sum\big._{l=1}^{k_i} (-1)^l \smbinom{k_i}{l}[\theta_{i,\kappa}]_{\eta_{i}}^{k_i-l}\wedge\eps_\kappa^{l}\omega^{l}\wedge T
		\end{split}
		\end{equation*}
	for any closed current $T$ with analytic singularities.
	By applying this recursively to each factor in $[\tilde\theta_{t,\kappa}]_{\tilde\eta_{t,\kappa}}^{k_{t-1}}\wedge \cdots \wedge[\tilde\theta_{1,\kappa}]_{\tilde\eta_{1,\kappa}}^{k_1}$, we  get 
	\begin{equation*}
		\pi_\ast \big( [\tilde\theta_{t,\kappa}]_{\tilde\eta_{t,\kappa}}^{k_t}\wedge \cdots \wedge[\tilde\theta_{1,\kappa}]_{\tilde\eta_{1,\kappa}}^{k_1}\big)=\pi_\ast \big( [\theta_{t,\kappa}]_{\eta_t}^{k_t}\wedge \cdots \wedge[\theta_{1,\kappa}]_{\eta_1}^{k_1}\big)+\sum\big._{j}\eps_\kappa^{l_j} T_{j,\kappa}
	\end{equation*}
	for positive integers $l_j>0$ and currents $T_{j,\kappa}$ which are pushforwards of quasipos currents with analytic singularities.
	Moreover, we can apply \autoref{thm:extBTD-continuity} for each term and get
	\begin{equation*}
		\pi_\ast \big( [\theta_{t}]^{k_t}\wedge \cdots \wedge[\theta_{1}]^{k_1}\big)=
		\lim_{\kappa\rightarrow\infty}\pi_\ast \big( [\tilde\theta_{t,\kappa}]_{\tilde\eta_{t,\kappa}}^{k_t}\wedge \cdots \wedge[\tilde\theta_{1,\kappa}]_{\tilde\eta_{1,\kappa}}^{k_1}\big).
	\end{equation*}
	As all currents are positive, we can estimate the Lelong number of the LHS from below with the limit superior of the Lelong number of the RHS, see \autoref{pr:Lelong-estimate-sequence}.
	This and \eqref{eq:first-inequ-to-get-first-main-result} imply
	\begin{equation*}
			\nu\Big(\pi_\ast \big( [\theta_t]^{k_t}\wedge \cdots \wedge[\theta_1]^{k_1}\big), y\Big)
			\geq
			\lim_{\kappa\rightarrow\infty}\prod\Big._{i=1}^t \min\{\nu(\theta_{i,\kappa},x),\delta_i\}^{k_i}
			=
			\prod\Big._{i=1}^t \min\{\nu(\theta_i,x),\delta_i\}^{k_i}
	\end{equation*}
	where the last equation follows from the choice of $\theta_{i,\kappa}$, \cf \autoref{thm:DemRegAnalySing} (ii).

	\smallskip

	\ManualItem{ii}
	We prove \eqref{eq:mainthm-1bII} by induction over $t$.
	The case $t=1$ follows from (i). For the induction step, we will prove
	\begin{equation}\label{eq:IndVers-MTii}
		\nu\big(\pi_\ast\big([\theta_t]^{k_t}\big)\wedge T,y\big)\geq \min\{\nu(\theta_t,x),\delta_t\}^{k_t}\cdot\nu(T,y)
	\end{equation}
	for the closed positive current $T:=\pi_\ast \big([\theta_{t-1}]^{k_{t-1}}\big)\wedge \cdots \wedge \pi_\ast \big([\theta_{1}]^{k_1}\big)$.

	Pick a closed real $(1,1)$-form $\alpha\in\{\theta_{t}\}_{\BC}=\{\eta_{t}\}_{\BC}$. 
	There are $\alpha$-psh functions $q$ and $v$ (the latter with neat analytic singularities only in $x$) such that $\theta_{t}=dd^c q+\alpha$ and $\eta_{t}=dd^c v+\alpha$. We set $q^{(\lambda)}:=\max\{q,\frac\delta{\delta_t} v-\lambda\}$ with $\delta<\nu(q,x)=\nu(\theta_t,x)$ and $\delta\leq\delta_t$. We get  $q^{(\lambda)}$ is $\alpha$-psh and decreasing pointwise to $q$ as $\lambda\rightarrow\infty$.
	As in the proof of \autoref{pr:Lelong-estimate-AW-products}, %
	$q^{(\lambda)}$ has neat analytic singularities only in $x$ and $\nu(q^{(\lambda)},x)=\delta$. %
	We set $\theta_{t}^{(\lambda)}=dd^c q^{(\lambda)}$ and get %
	\begin{equation*}%
		\pi_\ast\big([\theta_{t}]^{k_t}\big)\wedge T
		=\lim_{\lambda\rightarrow\infty}
		\pi_\ast\Big(\big[\theta_{t}^{(\lambda)}\big]^{k_t}\Big)\wedge T
		=\lim_{\lambda\rightarrow\infty}
		\pi_\ast\Big(\big(\theta_{t}^{(\lambda)}\big)^{k_t}\wedge \pi^\ast T\Big)
	\end{equation*}
	where the first equation follows from \autoref{thm:extBTD-continuity} %
	and the second from \autoref{pr:extBTD-def} and \BTD (by selecting a smooth approximation of $\theta_{t}^{(\lambda)}$ for each $\lambda$). %
	Semicontinuity of the Lelong number, see \autoref{pr:Lelong-estimate-sequence}, implies
	\begin{equation*}
		\nu\big(\pi_\ast\big([\theta_t]^{k_t}\big)\wedge T,y\big)
		\geq 
		\limsup_{\lambda\rightarrow\infty}
		\nu\Big(\pi_\ast\Big(\big(\theta_{t}^{(\lambda)}\big)^{k_t}\wedge \pi^\ast T\Big),y\Big).
	\end{equation*}
	By the Propositions \ref{pr:Lelong-estimate-pushforward} and \ref{pr:Lelong-current-pullback}, we get 
	\begin{equation*}
		\nu\Big(\pi_\ast\Big(\big(\theta_{t}^{(\lambda)}\big)^{k_t}\wedge \pi^\ast T\Big),y\Big)
		\geq \nu\big((\theta_{t}^{(\lambda)}\big),x)^{k_t}\cdot\nu(\pi^\ast T,x)
		\geq \delta^{k_t}\cdot\nu(T,y).
	\end{equation*}
	Since this holds for all $\delta\leq\delta_t$ with $\delta<\nu(\theta_t,x)$, \eqref{eq:IndVers-MTii} is proven.
\end{proof}

\section{Segre currents}
\label{sc:Segre}
\noindent
In \cite{LRRS}, Lärkäng, Raufi, Ruppenthal and the author define so-called Chern and Segre currents for singular Hermitian metrics on vector bundles  which are positively curved in the sense of Griffiths\footnote{as defined by \cite{BP08}*{Def. 3.2}; also called singularly (semi-) positive in the sense of Griffiths}\hspace{-.9ex}. These currents represent the Chern and Segre classes of the vector bundles. In this section, we are going to present the construction of these currents in a (slightly) more general setting.

\begin{set}\label{st:section-4}
	Let $X$ be a complex manifold of dimension \dimX, let $E$ be a holomorphic vector bundle on $X$ of rank \rkE,
	let $\pi\colon \PE\rightarrow X$ denote the projectivization of hyperplanes of $E$ (\ie $\pi^{-1}(x)=\PP(E^\ast_x)$ for all $x\in X$),
	let $L:=\cO_{\PP(E)}(1)\rightarrow\PE$ denote the hyperplane bundle of $E$, and %
	let $e^{-\ph}$ be a singular metric on $L$ such that its curvature current $\theta:=dd^c\ph$  is quasipositive, \ie the weights $\ph$ are quasipsh functions.
	Let $L(\ph)$ denote the unbounded locus of $\ph$, \ie $L(\ph)=L(\theta)$.
	There is a (pos) $(1,1)$-form $\gamma$ on \PE such that $\theta+\gamma\geq 0$, \ie $\theta$ is $\gamma$-pos.
	For any small enough open set $U\subset X$, we may pick $\gamma$ such that $\gamma$ is closed and positive on $\pi^{-1}(U)$, see \autoref{rk:Kaehler-pi}.
	Let $e^{-\ph_\kappa}$ be a sequence of smooth/singular metrics on $L$ such that $\ph_\kappa$ is decreasing pointwise to $\ph$ and $dd^c\ph_\kappa$ is $\gamma$-positive.
	Let $T$ be a closed real $(p,p)$-current on $X$ which is locally given as difference of two closed positive currents.
\end{set}

If $H$ is a singular Hermitian metric on $E$, then it induces a singular metric $e^{-\psi}$ on $L$ whose dual metric is given by $e^\psi=\pi^\ast H^\ast|_{L^\ast}$ since $L^\ast=\cO_{\PE}(-1)$ is a subbundle of $\pi^\ast E^\ast$. %
$e^{-\psi}$ has a quite special geometric behaviour on each fibre $\pi^{-1}(x)$ coming from the Hermitian structure of $H_x$ which turns out to be unnecessary for the definition of Chern and Segre currents.
To keep the setting more general, we consider any singular metric $e^{-\ph}$ on $L$. %
There is a one-to-one correspondence between singular metrics on $L$ and singular metrics on $E^\ast$ considering so-called singular Finsler metrics\footnote{A singular Finsler metric $h$ on $E^\ast$ is given by $E^\ast\rightarrow [0,\infty], (p,\xi)\mapsto \Vert\xi\Vert_{h(p)}$ with $\Vert\lambda \xi\Vert_{h(p)}=|\lambda|{\cdot}\Vert\xi\Vert_{h(p)}$.}\hspace{-.9ex}.

If the weights of $\ph$ are psh, \ie $dd^c\ph$ is positive, then $L=\cO_{\PP(E)}(1)$ is pseudoeffective, and so is $E$ by definition%
\footnote{In the literature, sometimes called weakly to distinguish it from strongly pseudoeffective.}\hspace{-.9ex}.
Moreover, we get that $E$ is strongly pseudoeffective if $\pi(L(\ph))$ does not equal $X$ by definition, see \cite{BDPP13}*{Def.~7.1}. In particular, the following results give Segre and Chern currents for strongly pseudoeffective vector bundles and not only vector bundles positively curved in the sense of Griffiths. Let us stress that we work in the even more general \autoref{st:section-4} where the weights $\ph$  are just supposed to be quasipsh.

\bigskip

As in \cite{LRRS}*{Prop.~4.6}, \autoref{pr:extBTD-def} implies the following result.
\begin{thm}\label{thm:def-SegreCurrents-general}
	In the \autoref{st:section-4} with $\ph_\kappa$ smooth,
	if $\pi(L(\ph))$ is contained in an analytic set of codimension $\geq k+p$, then 
	\begin{equation}\label{eq:def-SegreCurrents-general}
		s_k(E,\ph)\wedge T: = (-1)^k \pi_\ast \big([dd^c\ph]^{k+r-1}\wedge \pi^\ast T\big)\overset{\Def}{=} (-1)^k \lim_{\kappa\rightarrow\infty} \pi_\ast \big((dd^c\ph_\kappa)^{k+r-1}\wedge \pi^\ast T\big)
	\end{equation}
	is a well-defined closed real current which is locally the difference of two closed positive currents, and independent of the choice of the smooth approximation $\ph_\kappa$.
\end{thm}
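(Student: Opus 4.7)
The plan is to deduce this theorem as a direct application of Proposition~\ref{pr:extBTD-def}, with $\theta = dd^c \ph$ playing the role of the quasipositive $(1,1)$-current and with $\pi^{\ast} T$ playing the role of the closed positive base current (after locally decomposing $T$).

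First I would verify the hypotheses of Proposition~\ref{pr:extBTD-def} in our situation. The projectivization $\pi\colon \PE \to X$ is a proper holomorphic submersion whose fibres $\PP(E_x^{\ast}) \iso \PP^{r-1}$ all have dimension $m = r-1$. Since $\pi$ is a projective bundle, it is Kähler in the sense of Remark~\ref{rk:Kaehler-pi}, so for every small enough open $V \subset X$ there exists a closed positive $(1,1)$-form $\gamma_{+}$ on $\pi^{-1}(V)$ with $\gamma \leq \gamma_{+}$. Setting~\ref{st:section-4} moreover supplies smooth $\gamma$-positive $(1,1)$-forms $dd^c \ph_\kappa \in \{\theta\}_{\BC}$ with $\ph_\kappa$ decreasing pointwise to $\ph$, which is exactly the kind of approximation required in Proposition~\ref{pr:extBTD-def}. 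With $k$ replaced by $k+r-1$ and $p$ unchanged, the codimension assumption of that proposition reads $\codim \pi(L(\theta)) \geq (k+r-1) + p - m = k+p$, which is precisely our hypothesis.

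Next I would handle the fact that $T$ is only locally, not globally, a difference of two closed positive currents. On any small enough open $V \subset X$, write $T = T_{+} - T_{-}$ with $T_{\pm}$ closed and positive on $V$. Since $\pi$ is a submersion, $\pi^{\ast} T_{\pm}$ are closed positive currents on $\pi^{-1}(V)$, and $\pi^{\ast} T = \pi^{\ast} T_{+} - \pi^{\ast} T_{-}$ there. Applying Proposition~\ref{pr:extBTD-def} separately to $\pi^{\ast} T_{+}$ and to $\pi^{\ast} T_{-}$ (with the same sequence $dd^c \ph_\kappa$), we obtain that
\begin{equation*}
\pi_{\ast}\big([\theta]^{k+r-1} \wedge \pi^{\ast} T_{\pm}\big) \;=\; \lim_{\kappa \to \infty} \pi_{\ast}\big((dd^c \ph_\kappa)^{k+r-1} \wedge \pi^{\ast} T_{\pm}\big)
\end{equation*}
exists on $V$, is locally the difference of two closed positive currents, and is independent of the choice of $\ph_\kappa$. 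Subtracting, the same holds for $\pi_{\ast}([\theta]^{k+r-1} \wedge \pi^{\ast} T)$, and multiplication by the sign $(-1)^k$ is harmless.

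Finally I would observe that the resulting local currents on the various $V$ agree on overlaps and hence glue to a globally defined closed real current on $X$, since each is intrinsically characterised as the weak limit $\lim_{\kappa \to \infty} \pi_{\ast}((dd^c \ph_\kappa)^{k+r-1} \wedge \pi^{\ast} T)$, and this limit does not depend on the chosen local decomposition of $T$. There is no serious obstacle; the only point that needs a small verification is that the decomposition $T = T_{+} - T_{-}$ may vary from chart to chart, but the independence of the approximation $\ph_\kappa$ granted by Proposition~\ref{pr:extBTD-def} immediately implies independence of the local decomposition as well, since two such decompositions differ by the addition of a common closed positive current to $T_{+}$ and $T_{-}$.
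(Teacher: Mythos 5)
Your proof is correct and follows exactly the route the paper indicates: the paper simply cites Proposition~\ref{pr:extBTD-def} (following the argument of [LRRS, Prop.~4.6]), which is what you apply after checking that $\pi\colon\PE\rightarrow X$ is a proper Kähler submersion with $(r{-}1)$-dimensional fibres and that the codimension count $(k+r-1)+p-(r-1)=k+p$ matches the hypothesis. The local decomposition $T=T_+-T_-$, the linearity, and the gluing argument you give are all routine and consistent with the paper's intent.
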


For a smooth Hermitian metric $H$ on $E$, 
the total Segre form $s(E,H)$ is (classically) defined as
the inverse of the total Chern form $c(E,H)=1+c_1(E,H)+\ldots+c_n(E,H)$.
Then, its $(k,k)$-component $s_k(E,H)$ can be calculated using the projectivization of the vector bundle, $s_k(E,H)=(-1)^k\pi_\ast(dd^c \psi)^{k+r-1}$ where $e^\psi=\pi^\ast H^\ast|_{L^\ast}$.
Therefore, the Segre currents $s_k(E,\ph)$ defined above are naturally extending the concept of Segre forms to singular metrics on $\cO_\PE(1)$. Moreover, the so-called Chern currents 
\begin{equation}\label{eq:def-ChernCurrents-general}
	c_k(E,\ph) := (-1)^t\sum_{k_1+\ldots+k_t=k} s_{k_t}(E,\ph)\wedge \cdots \wedge s_{k_1}(E,\ph)
\end{equation}
given by a recursive application of \autoref{thm:def-SegreCurrents-general}
are naturally extending the concept of Chern forms.

\medskip 

\autoref{cr:extBTD-continuity-special} implies the following monotone continuity result generalizing \cite{LRRS}*{Thm~1.5}.
\begin{thm}\label{thm:wedge-products-of-segre-currents}
	In the \autoref{st:section-4},
	if $\pi(L(\ph))$ is contained in an analytic set of codimension $\geq k_1+\ldots+k_t$, then
	\begin{equation*}
		s_{k_t}(E,\ph)\wedge \cdots \wedge s_{k_1}(E,\ph) = \lim_{\kappa\rightarrow\infty} \pi_\ast \big((dd^c\ph_\kappa)^{k_t+r-1}\big)\wedge \cdots \wedge \pi_\ast \big((dd^c\ph_\kappa)^{k_1+r-1}\big)
	\end{equation*}
	for any (not necessarily smooth) approximation $\ph_\kappa$ decreasing pointwise to $\ph$.
\end{thm}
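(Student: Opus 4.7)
The plan is to recognise the claim as a direct specialisation of Corollary \ref{cr:extBTD-continuity-special} to the projection $\pi\colon\PE\to X$, whose fibres are copies of $\PP^{r-1}$ and hence Kähler. By the defining formula \eqref{eq:def-SegreCurrents-general}, each factor on the left satisfies $s_{k_i}(E,\ph)=(-1)^{k_i}\pi_\ast\bigl([dd^c\ph]^{k_i+r-1}\bigr)$, so after pulling out the overall sign $(-1)^{k_1+\ldots+k_t}$ (which enters symmetrically on both sides once the same formula is used for $\ph_\kappa$ in place of $\ph$ on the right), the statement reduces to a monotone continuity assertion for wedge products of pushforwards of extended \BTD powers.

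First I would set up Corollary \ref{cr:extBTD-continuity-special} with $\pi_i=\pi$, $m_i=r-1$, $\theta_i=dd^c\ph$, $\theta_{i,\kappa}=dd^c\ph_\kappa$, exponents $k_i'=k_i+r-1$, and $T=1$ (so $p=0$). Setting \ref{st:section-4} already provides a $(1,1)$-form $\gamma$ such that $\theta$ and each $\theta_{i,\kappa}$ are $\gamma$-positive, and Remark \ref{rk:Kaehler-pi} (applied to the Kähler morphism $\pi$) yields, on $\pi^{-1}(V)$ for arbitrarily small open $V\subset X$, both a closed positive dominator $\gamma_+$ and a decomposition $\alpha_{i,+}-\alpha_{i,-}\in\{\theta\}_{\BC}$ as the difference of closed positive forms. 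Next I would verify that the codimension hypothesis of Corollary \ref{cr:extBTD-continuity-special} specialises correctly: with the parameters above,
\begin{equation*}
	k_1'+\ldots+k_t'-m_1-\ldots-m_t+p=\sum\nolimits_{i=1}^t(k_i+r-1)-t(r-1)=k_1+\ldots+k_t,
\end{equation*}
so the hypothesis $\codim\pi(L(\ph))\geq k_1+\ldots+k_t$ of the present theorem is exactly what is needed.

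Corollary \ref{cr:extBTD-continuity-special} then produces, for any sequence $\ph_\kappa\searrow\ph$ (smooth or not) with $\theta_{i,\kappa}=dd^c\ph_\kappa$ being $\gamma$-positive, the weak convergence
\begin{equation*}
	\pi_\ast\bigl([dd^c\ph]^{k_t+r-1}\bigr)\wedge\cdots\wedge\pi_\ast\bigl([dd^c\ph]^{k_1+r-1}\bigr)=\lim_{\kappa\to\infty}\pi_\ast\bigl([dd^c\ph_\kappa]^{k_t+r-1}\bigr)\wedge\cdots\wedge\pi_\ast\bigl([dd^c\ph_\kappa]^{k_1+r-1}\bigr).
\end{equation*}
For smooth $\ph_\kappa$ one has $[dd^c\ph_\kappa]^{k_i+r-1}=(dd^c\ph_\kappa)^{k_i+r-1}$, while for general (non-smooth) $\ph_\kappa$ the pushforwards $\pi_\ast([dd^c\ph_\kappa]^{k_i+r-1})$ are already well defined by Proposition \ref{pr:extBTD-def} since $L(dd^c\ph_\kappa)\subset L(dd^c\ph)$. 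Multiplying through by $(-1)^{k_1+\ldots+k_t}$ then yields the identity claimed in the theorem. I do not expect any genuine obstacle: the theorem is essentially a translation of Corollary \ref{cr:extBTD-continuity-special} into Segre-current notation, and the real content of the argument already sits in Theorem \ref{thm:extBTD-continuity} together with the fibre-product identity \autoref{lm:fibre-products-and-direct-images} used to rewrite wedge products of pushforwards as a single pushforward from $\PE\times_X\cdots\times_X\PE$. The only point that merits care is bookkeeping the sign conventions and confirming that all the cohomological and positivity hypotheses of \ref{cr:extBTD-continuity-special} specialise cleanly from the Kähler fibration $\PE\to X$.
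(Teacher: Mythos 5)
Your proposal is correct and coincides with the paper's intended argument: the paper merely asserts that Corollary~\ref{cr:extBTD-continuity-special} implies the theorem and leaves the specialisation to the fibration $\pi\colon\PE\to X$ implicit, and you have spelled it out with the right parameter choices $\pi_i=\pi$, $m_i=r-1$, $k_i'=k_i+r-1$, $T=1$ together with the correct check that the codimension condition reduces to $k_1+\ldots+k_t$. The only quibble concerns your parenthetical about signs: unwinding $s_k(E,\ph)=(-1)^k\pi_\ast\bigl([dd^c\ph]^{k+r-1}\bigr)$ actually produces an overall factor $(-1)^{k_1+\ldots+k_t}$ on the right-hand side that the printed statement of the theorem omits, and the sign does not ``enter symmetrically'' since the right side as written uses $(dd^c\ph_\kappa)^{k_i+r-1}$ rather than $s_{k_i}(E,\ph_\kappa)$ --- but this is a typo on the paper's side (compare with the $(-1)^k$ in Corollary~\ref{cr:appl-general-positive}), not a defect of your reduction.
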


\begin{rem}\label{rk:Wu's-currents-are-the-same}
	In particular, the currents given by \cite{WuX22}*{Thm~2} coincide with the currents defined in \cite{LRRS}.	
\end{rem}

\begin{rem}
	\autoref{thm:wedge-products-of-segre-currents} generalizes \cite{LRRS}*{Thm~1.5} in various ways.
	Among these generalizations, let us highlight that we do not need to assume that on $X\minus\pi(L(\ph))$, $\ph$ is continuous and $\ph_\kappa$ converges locally uniformly to $\ph$.
	This is particularly interesting since the well-known calculus for Chern and Segre forms can be also applied to Chern and Segre currents defined by \eqref{eq:def-SegreCurrents-general} and \eqref{eq:def-ChernCurrents-general} in this general setting, \cf \cite{LRRS}*{Cor.\ 1.9}.
\end{rem}

\medskip

Furthermore, we obtain the following cohomology result.
\begin{thm} 
	In \autoref{st:section-4}
	with compact $X$,
	if $L(\ph)$ is contained in an analytic set of codimension $\geq k_1+\ldots+k_t$, then we get
	\begin{equation*}\begin{split}
		s_{k_t}(E,\ph)\wedge \cdots \wedge s_{k_1}(E,\ph)
		\in\ &s_{k_t}(E)\cdots s_{k_1}(E)\hbox{\quad and}\\
		c_{k_t}(E,\ph)\wedge \cdots \wedge c_{k_1}(E,\ph)
		\in\ &c_{k_t}(E)\cdots c_{k_1}(E).
	\end{split}\end{equation*}
\end{thm}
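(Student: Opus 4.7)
The plan is to reduce to smooth approximations using the continuity result \autoref{thm:wedge-products-of-segre-currents}, and then invoke the standard fact that weak convergence of closed currents on a compact manifold preserves the de Rham cohomology class.

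First I would apply \autoref{thm:wedge-products-of-segre-currents} to rewrite the wedge product as a weak limit
\begin{equation*}
	s_{k_t}(E,\ph)\wedge \cdots \wedge s_{k_1}(E,\ph)
	= \lim_{\kappa\rightarrow\infty} \pi_\ast\big((dd^c\ph_\kappa)^{k_t+r-1}\big)\wedge \cdots \wedge \pi_\ast\big((dd^c\ph_\kappa)^{k_1+r-1}\big)
\end{equation*}
(up to the overall sign $(-1)^{k_1+\ldots+k_t}$), where I would choose smooth approximants $\ph_\kappa$; the existence of such smooth weights on $L=\cO_\PE(1)$, decreasing pointwise to $\ph$ in the same \BC-class, is guaranteed by Demailly's regularization (\autoref{thm:Dem-smooth-approximation}). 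For each smooth $\ph_\kappa$, the form $dd^c\ph_\kappa$ is a smooth representative of $c_1(\cO_\PE(1))$, so $(dd^c\ph_\kappa)^{k_i+r-1}$ represents $c_1(\cO_\PE(1))^{k_i+r-1}$, and by the Grothendieck formula for Segre classes of projective bundles, its $\pi$-pushforward is a smooth closed form representing $(-1)^{k_i}s_{k_i}(E)$. Absorbing the sign, each approximant on the right-hand side is a smooth closed form in the class $s_{k_t}(E)\cdots s_{k_1}(E)$.

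The second step is to show the cohomology class is preserved in the weak limit. Since $X$ is compact, the pairing $([S],[\alpha])\mapsto\int_X S\wedge\alpha$ between closed currents and smooth closed forms of complementary degree depends only on cohomology classes, is non-degenerate (de Rham's theorem for currents), and is continuous in $S$ under weak convergence. Hence the common class of the approximants is inherited by the limit, which proves the Segre part.

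For the Chern part, I would simply invoke the defining formula \eqref{eq:def-ChernCurrents-general}, which writes $c_k(E,\ph)$ as a specific polynomial in the Segre currents $s_{k_i}(E,\ph)$; the same polynomial expresses $c_k(E)$ in terms of the Segre classes $s_{k_i}(E)$, both coming from the cohomological identity $c(E)\cdot s(E)=1$. Substituting the already-proven Segre statement into this identity gives the Chern conclusion. I do not expect serious obstacles: the whole argument is a compactness-plus-continuity argument on top of \autoref{thm:wedge-products-of-segre-currents}. The points to handle carefully are (i) the preservation of cohomology under weak convergence on compact $X$, which is standard, and (ii) that the codimension hypothesis stated here in terms of $L(\ph)$ should be read as, or be shown to imply, the codimension hypothesis on $\pi(L(\ph))$ required by \autoref{thm:wedge-products-of-segre-currents}; this last point is essentially a matter of convention.
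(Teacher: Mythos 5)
Your proposal is correct and follows essentially the same route as the paper, which simply cites the proof of \cite{LRRS}*{Thm~1.13} together with Demailly's regularization (\autoref{thm:Dem-smooth-approximation}): regularize $\ph$ by smooth approximants in the same \BC-class, use the monotone continuity of \autoref{thm:wedge-products-of-segre-currents} to obtain weak convergence to the mixed Segre current, observe each smooth approximant lies in the correct Segre (and hence Chern) class by the usual Grothendieck/Segre-form calculus, and conclude via preservation of the de Rham class under weak limits on compact $X$. The two points you already flag are the only real things to tidy: the codimension condition should indeed be read as $\pi(L(\ph))$ to match \autoref{thm:def-SegreCurrents-general} and \autoref{thm:wedge-products-of-segre-currents}, and when invoking \autoref{thm:Dem-smooth-approximation} one should note that the $\theta_\kappa$ are only $(\gamma{+}C\lambda_\kappa\omega)$-positive rather than $\gamma$-positive, which is harmless since $\lambda_\kappa\leq\lambda_1$ allows one to run the argument with the fixed form $\tilde\gamma:=\gamma{+}C\lambda_1\omega$ in place of $\gamma$ throughout \autoref{st:section-4}.
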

\begin{proof}
	The argumentation is exactly the same as in the proof of \cite{LRRS}*{Thm\ 1.13} by
	applying Demailly's regularization, see \autoref{thm:Dem-smooth-approximation}.
\end{proof}

\bigskip

\begin{rem-def}\label{rdf:SegreCurrents-analySing}
	Let us assume that the metric $\ph$ on $\cO_\PE(1)$ has analytic singularities.
	Using the generalized Monge-Ampère products, the Segre currents
	\begin{equation}\label{eq:DefAWSegre1F}
		s_k(E,\ph,\alpha):=(-1)^k\pi_\ast\big([dd^c\ph]_\alpha^{k+r-1}\big)
	\end{equation}
	and their wedge products $s_{k_t}(E,\ph,\alpha)\wedge \cdots \wedge s_{k_1}(E,\ph,\alpha) $ can be defined for arbitrary degrees following \cite{LRSW}*{Thm~1.1}.
	If $\alpha$ is a closed $(1,1)$-form representing $\cO_\PE(1)$, we get that these currents represent their corresponding Segre classes. $s_k(E,\ph,\eta)$ can be defined for $\eta$ with analytic singularities in isolated points, as well.
	For every $\xi\in\PE$,
	\autoref{cr:Lelong-estimate-pushforwards-AW-products} implies 
	\begin{equation}
		\nu(s_k(E,\ph,\eta),\pi(\xi))\geq\min\{\nu(\ph,\xi),\nu(\eta,\xi)\}^{k+r-1}
	\end{equation}
	if $k\leq n$ and if $dd^c\ph$ and $\eta$ are positive near $\pi^{-1}(\pi(\xi))$.
	By \autoref{lm:peaker}, we can always find such an $\eta=\eta_\xi$ with $\nu(\eta,\xi)=1$ and neat analytic singularities, see proof of \autoref{MainThm:analytic-singularities}.

	For wedge products of Segre currents, the situation is more complicated.
	Let $\varpi\colon P:=\PE\times_X\cdots\times_X\PE\rightarrow X$ be the $t$-fibre power of $\pi\colon\PE\rightarrow X$, and let $\ph_i$ denote the pullback metric $\pr_i^\ast\ph$ on $\pr_i^\ast\cO_\PE(1)$ where $\pr_i\colon P\rightarrow \PE$ denotes the projection on the $i$th component. For all $k=k_1+\ldots+k_t$, \cite{LRSW} defines
	\begin{equation}\label{eq:DefAWSegreSevF}
		s_{k_t}(E,\ph,\alpha)\wedge \cdots \wedge s_{k_1}(E,\ph,\alpha)
		:=(-1)^{k}
		\varpi_\ast\big([dd^c\ph_t]_{\alpha_t}^{k_t+r-1}\wedge \cdots \wedge [dd^c\ph_1]_{\alpha_1}^{k_1+r-1}\big)
	\end{equation}
	where $\alpha_i=\pr_i^\ast\alpha$ for a closed real $(1,1)$-form $\alpha$ on $\PP(E)$ representing $\cO_\PE(1)$; \cf \autoref{rk:gMAwFP}.
	This cannot be extended straightforwardly to $\eta$ with (neat) analytic singularities in isolated points
	since the pullbacks of such $\eta$ have singularities in sets of lower codimension than points and \BTD cannot be applied in general.
	We guess that it might be possible to still define these products as the singularities of  $\pr_i^\ast\eta$ are transversal to each others.
	An alternative approach could be the following.
	For every $\xi\in\PE$, there is (exactly) one $p\in\bigcap_{i=1}^t \pr_i^{-1}(\xi)$.
	Let $\eta_i$ be a closed quasipositive $(1,1)$-currents with analytic singularities in isolated points including $p$ such that $\eta_i$ is representing $\pr_i^{\ast}\cO_\PE(1)$. %
	Then,
	\begin{equation*}
		S:=%
		\varpi_\ast\big([dd^c\ph_t]_{\eta_t}^{k_t+r-1}\wedge \cdots \wedge [dd^c\ph_1]_{\eta_1}^{k_1+r-1}\big)
	\end{equation*}
	is locally the difference of two closed positive currents and the pushforward of a closed quasipos $(k,k)$-current with analytic singularities, and  $S\in(-1)^{k}s_{k_1}(E)\cdot\ldots\cdot s_{k_t}(E)$. 
	By \autoref{cr:Lelong-estimate-pushforwards-AW-products}, we obtain
	\begin{equation*}
		\nu(S,\pi(\xi))\geq\min\{\nu(\ph,\xi),\nu(\eta_i,p)\}^{k+tr-r}
	\end{equation*}
	if $k\leq n$ and if $dd^c\ph$ positive near $\pi^{-1}(\pi(\xi))$ and $\eta_i$ 
	near $\varpi^{-1}(\pi(\xi))$ since $\nu(\ph,\xi)=\nu(\ph_i,p)$ by \autoref{lm:Lelong-of-pullback}.
	Unfortunately, it is not clear whether such $\eta_i$ exist as $\pr_i^\ast \cO_\PE(1)$ is not Kähler near $\varpi^{-1}(\pi(\xi))$ in general.
\end{rem-def}

\bigskip

Let us conclude this section with the proofs of
\autoref{cr:positive-Segre-current-in-vanishing-class-gives-nef}
and \autoref{thm:vanishing-class-and-more-give-nef}.

\RepeatThirdIntroCor

\begin{proof} 
	We set $k=k_1+\ldots+k_t$.
	Since $S:=(-1)^ks_{k_t}(E,\ph)\wedge \cdots \wedge s_{k_1}(E,\ph)$ is a closed positive current, and since 
	\begin{equation*}
		S\in (-1)^ks_{k_1}(E)\cdots s_{k_t}(E)=0,
	\end{equation*}
	$S$ must vanish. Therefore, $\nu(S,\pi(\xi))=0$, and by \autoref{cr:appl-general-positive}, $\nu(\ph,\xi)=0$ for all $\xi\in\PP(E)$.
	Let $\omega_{\PP(E)}$ denote the Hermitian form on \PE (which is not necessarily closed).
	For every $\eps>0$, there is a singular Hermitian metric $\ph_\eps$ of $\cO_{\PP(E)}(1)$ with analytic singularities such that $dd^c\ph_\eps\geq-2\eps \omega_{\PP(E)}$ and $\nu(\ph_\eps,y)\leq\nu(\ph,y)=0$, see \cite{Demailly92-reg-closed-pos-currents}*{Prop.\ 3.7} or \autoref{thm:DemRegAnalySing}.
	As the Lelong number in an analytic singularity would be positive, we obtain that $L(\ph_\eps)=\emptyset$. By Richberg's approximation \cites{Richberg68}, using the version \cite{Demailly92-reg-closed-pos-currents}*{Lem.\ 2.15}, we can get a smooth approximation $\tilde\ph_\eps$ of $\ph_\eps$ with $dd^c\tilde\ph_\eps\geq-\eps \omega_{\PP(E)}$. As we get these smooth metrics for all $\eps>0$, we conclude $\cO_{\PP(E)}(1)$ is nef, \ie $E$ is nef.
	See also \cite{Boucksom04-Zariski}*{Prop.\ 3.2 (i)}.
\end{proof}

\RepeatFourthIntroCor

\begin{proof}
	As $X$ is Kähler, so is \PE. Let $\omega_\PE$ denote its Kähler form, which is in $\cO_\PE(1)$, and let $r$ denote the rank of $E$. 

	Fix $\eps>0$. 
	Let $H$ be a smooth Hermitian metric on $E$, and let $e^{-\sigma}$ denote the induced metric on $\cO_{\PP(E)}(1)$, \ie $e^{\sigma}=\pi^\ast H|_{\cO_{\PP(E)}(-1)}$. %
	As $dd^c\sigma$ is positive along fibres of $\pi$, there is a small enough $\delta$ such that $dd^c\sigma\geq \delta\omega_\PE-\delta^{-1} \pi^\ast \omega$.
	By Demailly's regularization, see \autoref{thm:DemRegAnalySing}, there exist a singular Hermitian metric $\ph_\eps$ with analytic singularities on $\cO_{\PP(E)}(1)$  such that $dd^c\ph_\eps\geq-\delta^2\eps\omega_\PE$ and $\delta_\eps>0$ (with $\delta_\eps\rightarrow 0$ as $\eps\rightarrow 0$) such that
	\begin{equation*}
		\nu(\ph,\xi)-\delta_\eps\leq\nu(\ph_\eps,\xi)\leq\nu(\ph,\xi)\quad\forall \xi\in \PP(E). %
	\end{equation*}
	As in the proof of \cite{DPS97-nef}*{Thm~11.12}, we can modify $\ph_\eps$ such that $dd^c\ph_\eps\geq-\eps\pi^\ast\omega$:
	Replace $\ph_\eps$ by the barycentre of $\ph_\eps$ and $\sigma$ given by $(1-\delta\eps)\ph_\eps+\delta\eps\cdot\sigma$ with %
	\begin{align*}%
		dd^c\big((1-\delta\eps)\ph_\eps+\delta\eps\sigma)
		&\geq (\delta\eps-1)\delta^2\eps\omega_\PE+\delta\eps(\delta\omega_\PE-\delta^{-1}\pi^\ast\omega)\\
		&= (\delta^3\eps^2-\delta^2\eps+\delta^2\eps)\omega_\PE-\eps \pi^\ast\omega\\
		&\geq -\eps \pi^\ast\omega.%
	\end{align*}%
	Since $\ph_\eps$ has analytic singularities, we get that $L(\ph_\eps)$ is an analytic set.
	Moreover, $\xi\in L(\ph_\eps)$ if and only if $0<\nu(\ph_\eps,\xi)\leq \nu(\ph,\xi)$ such that %
	$\codim \pi({L(\ph_\eps)})\geq k$ by assumption.
	This implies
	\begin{equation*}
		\big(c_1(E,\ph_\eps)\big)^k = \big(-s_1(E,\ph_\eps)\big)^k = \Big(\pi_\ast\big([dd^c\ph_\eps]^r)\Big)^k
	\end{equation*}
	is a closed real $(k,k)$-current in $c_1(E)^k$. %
	As calculated in \autoref{rk:calculations-relative-positive}, we get
	\begin{equation*}
		\pi_\ast\big([dd^c\ph_\eps]^r\big)\wedge T
		= \left(\pi_\ast [dd^c\ph_\eps+\eps\pi^\ast\omega]^{r-1}- r\eps\omega\right)\wedge T.
	\end{equation*}
	Furthermore,
	\begin{equation*}
		 S_\eps:=\Big(\pi_\ast\big([dd^c\ph_\eps+\eps\pi^\ast\omega]^r)\Big)^k
	\end{equation*}
	is a closed positive $(k,k)$-current in $(c_1(E)+r\eps\{\omega\})^k$. %
	So, \autoref{MainThm:general-positive} implies
	\begin{equation}\label{eq:applicationMT}
		\nu(S_\eps,\pi(\xi))\geq\big(\nu(dd^c\ph_\eps+\eps\pi^\ast\omega,\xi)\big)^{kr}=\big(\nu(\ph_\eps,\xi)\big)^{kr}
	\end{equation}
	for all $\xi\in\PP(E)$.

	By weak compactness, there is a subsequence $\eps_\lambda$ such that $S_{\eps_\lambda}$ converges weakly to a closed positive current $S$ in $\big(c_1(E)\big)^k$.
	As assumed, this cohomology vanishes, and so does the closed positive $S$.
	By \autoref{pr:Lelong-estimate-sequence},
	we get that 
	\begin{equation*}
		\lim_{\lambda\rightarrow \infty} \nu(S_{\eps_\lambda},x) \leq \nu(S,0) = 0.
	\end{equation*}
	This in combination with \eqref{eq:applicationMT} and the choice of $\ph_\eps$ implies
	\begin{equation*}
		\nu(\ph,\xi)=\lim_{\lambda\rightarrow \infty} \nu(\ph_{\eps_\lambda},\xi) = 0.
	\end{equation*}
	Repeating the arguments in the proof of \autoref{cr:positive-Segre-current-in-vanishing-class-gives-nef} (or by \cite{Boucksom04-Zariski}*{Prop.~3.2 (i)}), we obtain that $\cO_{\PP(E)}(1)$ is nef.
\end{proof}

\bibliographystyle{amsalpha}
\newcommand{\noopsort}[1]{}
\begin{bibdiv}
\begin{biblist}

	\bib{Andersson05}{article}{
		AUTHOR = {Andersson, Mats},
		 TITLE = {Residues of holomorphic sections and {L}elong currents},
	    JOURNAL = {Ark. Mat.},
		VOLUME = {43},
		  YEAR = {2005},
		NUMBER = {2},
		 PAGES = {201--219},
		  ISSN = {0004-2080},
		   URL = {http://dx.doi.org/10.1007/BF02384777},
	 }

\bib{ABW}{article}{
      author={{Andersson}, Mats},
      author={{B\l ocki}, Zbigniew},
      author={{Wulcan}, Elizabeth},
       title={{On a Monge-Amp\`ere operator for plurisubharmonic functions with
  analytic singularities}},
        date={2019},
        ISSN={0022-2518},
     journal={{Indiana Univ. Math. J.}},
      volume={68},
      number={4},
       pages={1217\ndash 1231},
}

\bib{ASWY17}{article}{
    AUTHOR = {{A}ndersson, Mats},
author={{S}amuelsson {}Kalm, H\aa kan},
author={{W}ulcan, Elizabeth},
author={{Y}ger, Alain},
     TITLE = {Segre numbers, a generalized {K}ing formula, and local
              intersections},
   JOURNAL = {J. Reine Angew. Math.},
    VOLUME = {728},
      YEAR = {2017},
     PAGES = {105--136},
       URL = {https://doi.org/10.1515/crelle-2014-0109},
}

\bib{AW14-MA}{article}{
      author={Andersson, Mats},
      author={Wulcan, Elizabeth},
       title={Green functions, {Segre} numbers, and {King{\textquoteright}s}
  formula},
        date={2014},
     journal={Annales de l'Institut Fourier},
      volume={64},
      number={6},
       pages={2639\ndash 2657},
         url={https://aif.centre-mersenne.org/articles/10.5802/aif.2922/},
}

\bib{Bingener83-II}{article}{
 Author = {Bingener, J{\"u}rgen},
 Title = {On deformations of {K{\"a}hler} spaces. {II.}},
 FJournal = {Archiv der Mathematik},
 Journal = {Arch. Math.},
 ISSN = {0003-889X},
 Volume = {41},
 Pages = {517--530},
 Year = {1983},
 Keywords = {32G07,32H35,32J25,58H15},
 zbMATH = {3935471},
 Zbl = {0584.32043}
}

\bib{Blocki19-quasiMA}{article}{
      author={{B{\l}ocki}, Zbigniew},
       title={{On the complex Monge-Amp\`ere operator for
  quasi-plurisubharmonic functions with analytic singularities}},
        date={2019},
        ISSN={0024-6093},
     journal={{Bull. Lond. Math. Soc.}},
      volume={51},
      number={3},
       pages={431\ndash 435},
}

\bib{Boucksom04-Zariski}{article}{
      author={{Boucksom}, S\'ebastien},
       title={{Divisorial Zariski decompositions on compact complex
  manifolds}},
        date={2004},
        ISSN={0012-9593},
     journal={{Ann. Sci. \'Ec. Norm. Sup\'er. (4)}},
      volume={37},
      number={1},
       pages={45\ndash 76},
}

\bib{BDPP13}{article}{
      author={{Boucksom}, S\'ebastien},
      author={{Demailly}, Jean-Pierre},
      author={{P\u{a}un}, Mihai},
      author={{Peternell}, Thomas},
       title={{The pseudo-effective cone of a compact K\"ahler manifold and
  varieties of negative Kodaira dimension}},
        date={2013},
        ISSN={1056-3911},
     journal={{J. Algebr. Geom.}},
      volume={22},
      number={2},
       pages={201\ndash 248},
}

\bib{BP08}{article}{
      author={{Berndtsson}, Bo},
      author={{P\u{a}un}, Mihai},
       title={{Bergman kernels and the pseudoeffectivity of relative canonical
  bundles}},
        date={2008},
        ISSN={0012-7094},
     journal={{Duke Math. J.}},
      volume={145},
      number={2},
       pages={341\ndash 378},
}

\bib{BT76}{article}{
      author={{Bedford}, Eric},
      author={{Taylor}, B.~A.},
       title={{The Dirichlet problem for a complex Monge-Amp\`ere equation}},
        date={1976},
        ISSN={0020-9910},
     journal={Invent. Math.},
      volume={37},
      number={1},
       pages={1\ndash 44},
}

\bib{BT82}{article}{
      author={{Bedford}, Eric},
      author={{Taylor}, B.~A.},
       title={{A new capacity for plurisubharmonic functions}},
        date={1982},
        ISSN={0001-5962},
     journal={Acta Math.},
      volume={149},
      number={1-2},
       pages={1\ndash 40},
}

\bib{Demailly82}{article}{
      author={Demailly, Jean-Pierre},
       title={Estimations {$L\sp{2}$} pour l'op\'{e}rateur {$\bar \partial $}
  d'un fibr\'{e} vectoriel holomorphe semi-positif au-dessus d'une
  vari\'{e}t\'{e} k\"{a}hl\'{e}rienne compl\`ete},
        date={1982},
        ISSN={0012-9593},
     journal={Ann. Sci. \'{E}cole Norm. Sup. (4)},
      volume={15},
      number={3},
       pages={457\ndash 511},
         url={http://www.numdam.org/item?id=ASENS_1982_4_15_3_457_0},
}

\bib{Demailly85}{book}{
      author={Demailly, Jean-Pierre},
       title={Mesures de {Monge-Amp\`ere} et caract\'erisation g\'eom\'etrique
  des vari\'et\'es alg\'ebriques affines},
      series={M\'emoires de la Soci\'et\'e Math\'ematique de France},
   publisher={Soci\'et\'e math\'ematique de France},
        date={1985},
      number={19},
         url={http://www.numdam.org/item/MSMF_1985_2_19__1_0/},
}

\bib{Demailly92-reg-closed-pos-currents}{article}{
      author={Demailly, Jean-Pierre},
       title={{Regularization of closed positive currents and intersection
  theory}},
        date={1992},
        ISSN={1056-3911},
     journal={{J. Algebr. Geom.}},
      volume={1},
      number={3},
       pages={361\ndash 409},
}

\bib{Demailly93}{incollection}{
      author={Demailly, Jean-Pierre},
       title={Monge-{A}mp\`ere operators, {L}elong numbers and intersection
  theory},
        date={1993},
   booktitle={In: Complex analysis and geometry, Plenum Press.},
      series={Univ. Ser. Math.},
   publisher={Plenum, New York},
       pages={115\ndash 193},
}

\bib{DemaillyAG}{unpublished}{
      author={Demailly, Jean-Pierre},
       title={{Complex Analytic and Differential Geometry}},
        date={\noopsort{1999z}2012},
  url={\url{http://www-fourier.ujf-grenoble.fr/~demailly/manuscripts/agbook.pdf}},
        note={Institut Fourier, Universit\'e de Grenoble I. OpenContent
  AG-Book},
}

\bib{DPS97-nef}{article}{
      author={Demailly, Jean-Pierre},
      author={{Peternell}, Thomas},
      author={{Schneider}, Michael},
       title={{Compact complex manifolds with numerically effective tangent
  bundles}},
        date={1994},
        ISSN={1056-3911},
     journal={{J. Algebr. Geom.}},
      volume={3},
      number={2},
       pages={295\ndash 345},
}

\bib{FischerGrauert65}{article}{
 Author = {Fischer, Wolfgang},
 Author = {Grauert, Hans},
 Title = {Lokal-triviale {Familien} kompakter komplexer {Mannigfaltigkeiten}},
 FJournal = {Nachrichten der Akademie der Wissenschaften in G{\"o}ttingen. II. Mathematisch-Physikalische Klasse},
 Journal = {Nachr. Akad. Wiss. G{\"o}tt., II. Math.-Phys. Kl.},
 ISSN = {0065-5295},
 Volume = {1965},
 Pages = {89--94},
 Year = {1965},
 zbMATH = {3219133},
 Zbl = {0135.12601}
}

\bib{Fujiki78}{article}{
 Author = {Fujiki, Akira},
 Title = {Closedness of the {Douady} spaces of compact {K{\"a}hler} spaces},
 FJournal = {Publications of the Research Institute for Mathematical Sciences, Kyoto University},
 Journal = {Publ. Res. Inst. Math. Sci.},
 ISSN = {0034-5318},
 Volume = {14},
 Pages = {1--52},
 Year = {1978},
 Keywords = {32J25,32C25,32C15,53C55,32C37},
 zbMATH = {3636390},
 Zbl = {0409.32016}
}

\bib{Lelong57}{article}{
      author={Lelong, Pierre},
       title={Int\'egration sur un ensemble analytique complexe},
        date={1957},
     journal={Bulletin de la Soci\'et\'e Math\'ematique de France},
      volume={85},
       pages={239\ndash 262},
         url={http://www.numdam.org/articles/10.24033/bsmf.1488/},
}

\bib{Lelong68}{book}{
      author={Lelong, Pierre},
       title={{Fonctions plurisousharmoniques et formes diff\'{e}rentielles
  positives}},
   publisher={Gordon \& Breach, Paris-London-New York (distributed by Dunod
  \'{E}diteur, Paris)},
        date={1968},
}

\bib{LRRS}{article}{
      author={L\"{a}rk\"{a}ng, Richard},
      author={Raufi, Hossein},
      author={Ruppenthal, Jean},
      author={Sera, Martin},
       title={Chern forms of singular metrics on vector bundles},
        date={2018},
     journal={Adv. Math.},
      volume={326},
       pages={465\ndash 489},
}

\bib{LRSW}{article}{
      author={{L{\"a}rk{\"a}ng}, Richard},
      author={{Raufi}, Hossein},
      author={{Sera}, Martin},
      author={{Wulcan}, Elizabeth},
       title={{Chern forms of Hermitian metrics with analytic singularities on
  vector bundles}},
        date={2022},
        ISSN={0022-2518},
     journal={Indiana Univ. Math. J.},
      volume={71},
      number={1},
       pages={153\ndash 189},
         url={https://doi.org/10.1512/iumj.2022.71.8834},
}

\bib{LSW}{article}{
      author={{L\"ark\"ang}, Richard},
      author={{Sera}, Martin},
      author={{Wulcan}, Elizabeth},
       title={{On a mixed Monge-Amp\`ere operator for quasiplurisubharmonic
  functions with analytic singularities}},
        date={2020},
        ISSN={0024-6093},
     journal={{Bull. Lond. Math. Soc.}},
      volume={52},
      number={1},
       pages={77\ndash 93},
}

\bib{Meo96}{article}{
      author={Meo, Michel},
       title={Image inverse d'un courant positif ferm\'{e} par une application
  analytique surjective},
        date={1996},
        ISSN={0764-4442},
     journal={C. R. Acad. Sci. Paris S\'{e}r. I Math.},
      volume={322},
      number={12},
       pages={1141\ndash 1144},
}

\bib{Richberg68}{article}{
      author={Richberg, Rolf},
       title={Stetige streng pseudokonvexe {F}unktionen},
        date={1968},
        ISSN={0025-5831},
     journal={Math. Ann.},
      volume={175},
       pages={257\ndash 286},
         url={https://doi.org/10.1007/BF02063212},
}

\bib{WuX22}{article}{
      author={{Wu}, Xiaojun},
       title={Pseudo-effective and numerically flat reflexive sheaves},
        date={2022},
        ISSN={1050-6926},
     journal={J. Geom. Anal.},
      volume={32},
      number={4},
       pages={article 124},
         url={https://doi.org/10.1007/s12220-021-00865-0},
}

\end{biblist}
\end{bibdiv}

\end{document}